\newcommand*{\rom}[1]{\expandafter\@slowromancap\romannumeral #1@}
\theoremstyle{plain}
\newtheorem{thm}{Theorem}[section]
\newtheorem{lem}[thm]{Lemma}
\newtheorem{prop}[thm]{Proposition}
\newtheorem{defi}[thm]{Definition}
\newtheorem{remark}[thm]{Remark}
\newcommand{\prob}{\mathbb{P}}
\newcommand{\N}{\mathbb{N}}
\newcommand{\R}{\mathbb{R}}
\newcommand{\C}{\mathbb{C}}
\newcommand{\E}{\mathbb{E}}
\newcommand{\eps}{\epsilon}
\newcommand{\indic}{\mathbbm{1}}
\newcommand{\nn}{\nonumber}
\newcommand{\ER}{Erd\H{o}s-R\'{e}nyi}
\newcommand{\ee}{\mathbf{e}}
\newcommand{\vv}{\mathbf{v}}
\newcommand{\uu}{\mathbf{u}}
\newcommand{\ww}{\mathbf{w}}
\newcommand{\xx}{\mathbf{x}}
\newcommand{\yy}{\mathbf{y}}
\newcommand{\var}{\text{Var}}
\newcommand{\FF}{\mathcal{F}}
\newcommand{\indep}{\rotatebox[origin=c]{90}{$\models$}}
\begin{document}

\title{\Large Noise sensitivity of second-top eigenvectors of \ER~graphs\\ and sparse matrices}
\author{Jaehun Lee \footnote{Department of Mathematical Sciences, KAIST, ljhiverson@kaist.ac.kr}}
\date{January 10, 2020}

\maketitle

\abstract{We consider eigenvectors of adjacency matrices of Erd\H{o}s-R\'{e}nyi graphs and study the variation of their directions by resampling the entries randomly. Let $\mathbf{v}$ be the eigenvector associated with the second-largest eigenvalue of the Erd\H{o}s-R\'{e}nyi graphs. After choosing $k$ entries of the given matrix randomly and resampling them, we obtain another eigenvector $\mathbf{w}$ corresponding to the second-largest eigenvalue of the matrix obtained from the resampling procedure. We prove that, in a certain sparsity regime, $\mathbf{w}$ is “almost” orthogonal to $\mathbf{v}$ with high probability if $k\gg N^{5/3}$. On the other hand, if $k\ll q^2 N^{2/3}$, where $q$ is the sparsity parameter, we observe that $\mathbf{v}$ and $\mathbf{w}$ are “almost” collinear. This extends the recent work of Bordenave, Lugosi and Zhivotovskiy to the Erd\H{o}s-R\'{e}nyi model.}
\vspace{10mm}
%
%
%
%
%

\section{Introduction}
Let $X=(x_{ij})$ be a symmetric $N\times N$ matrix with positive integer $N$. The matrix $X$ is said to be a {\it Wigner random matrix} if it satisfies the following properties:
\begin{itemize}
	\item[(i)] For $i\le j$, the $x_{ij}$ are independent random variables and centered. The other entries are automatically defined by symmetry.
	\item[(ii)] For some $\sigma>0$, 
	\begin{align*} 
	\E x_{ij}^{2} = \begin{cases}1&\text{for }i<j,\\\sigma^{2}&\text{for }i=j.\end{cases}
	\end{align*}
\end{itemize}
The class of Wigner matrices is one of the most important classes in random matrix theory. The spectrum of a Wigner matrix has been deeply analyzed and many remarkable results have been proved thus far. Eigenvectors of  Wigner matrices have also attracted much attention. The joint distribution of the coordinates, the size of the largest (or smallest) coordinate and the $\ell^{p}$-norm are the main properties of interest.\\
\indent Very recently, Bordenave, Lugosi and Zhivotovskiy investigated another aspect of eigenvectors, especially the {\it top eigenvector}, the unit eigenvector corresponding to the largest eigenvalue \cite{BLZ19}. They studied how the direction of the top-eigenvector varies by {\it resampling} some entries of a given Wigner matrix. In other words, their main interest is the {\it noise sensitivity} of the top eigenvector. For the notion of noise sensitivity, we refer to the seminal work of Benjamini, Kalai, and Schramm \cite{BKS99}.\\
\indent The resampling procedure introduced in \cite{BLZ19} is as follows.
 For a positive integer $k\le\frac{N(N+1)}{2}$, let $S_{k}=\{(i_{1}j_{1}),\cdots, (i_{k}j_{k})\}$ be a random set of $k$ distinct pairs of positive integers (with $i_{m}\le j_{m}$) which is chosen uniformly from the family of all sets of $k$ distinct pairs. In the resampling procedure, a pair $(i_{m}j_{m})$ is used to denote an index of a matrix entry.
\begin{defi}[Resampling procedure]\label{def: resampling}
	 Let $X'=(x_{ij}')$ be an independent copy of $X$. Write the new random matrix $X^{[k]}=(x_{ij}^{[k]})$ generated from the given Wigner matrix $X$, by resampling entries. For $i\le j$, we define $x_{ij}^{[k]}$ according to 
	\begin{align}
	x_{ij}^{[k]} = \begin{cases}
	x_{ij}' & (ij)\in S_{k}, \\
	& \\
	x_{ij}  & (ij)\notin S_{k}.
	\end{cases}
	\end{align}
	The remaining entries of $X^{[k]}$ are determined by symmetry.
\end{defi}
Let $\lambda_{1}\ge\cdots\ge\lambda_{N}$ be the ordered eigenvalues of $X$ and, let $\vv_{1}$ be the top eigenvector of $X$. Similarly, we use the notation $\lambda_{1}^{[k]}\ge\cdots\ge\lambda_{N}^{[k]}$ and $\vv_{1}^{[k]}$ for the ordered eigenvalues and the top eigenvectors of $X^{[k]}$.\\
\indent Here we assume one additional property for the distribution of $x_{ij}$:
\begin{itemize}\label{eq: subexponential decay}
	\item[(iii)] There exists some constant $\vartheta>0$ such that, for all $i\le j$,
	\begin{align}
	\E [\exp(|x_{ij}|^{\vartheta})]\le\vartheta^{-1}.
	\end{align}
\end{itemize}
\begin{thm}[{\cite[Theorem 1]{BLZ19}}]\label{thm: BLZ thm1}
	Let $X$ be a Wigner matrix and assume \eqref{eq: subexponential decay}. Let $X^{[k]}$ be the matrix obtained by the resampling procedure in Definition \ref{def: resampling}. If $k\gg N^{5/3}$, then
	\begin{align}
	\E\left\lvert \left\langle \vv_{1},\vv_{1}^{[k]} \right\rangle \right\rvert = o(1)
	\end{align}
\end{thm}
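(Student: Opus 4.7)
The plan is to bound $\E|\langle \vv_1, \vv_1^{[k]}\rangle|$, which by Cauchy--Schwarz is at most $(\E|\langle \vv_1,\vv_1^{[k]}\rangle|^2)^{1/2}$, so it suffices to prove $\E|\langle\vv_1,\vv_1^{[k]}\rangle|^2=o(1)$. Writing
\[
1-|\langle \vv_1,\vv_1^{[k]}\rangle|^2=\sum_{j\neq 1}|\langle \vv_j,\vv_1^{[k]}\rangle|^2,
\]
the task becomes showing that $\vv_1^{[k]}$ places almost all of its mass on the eigendirections of $X$ orthogonal to $\vv_1$.

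\textbf{Heuristic via first-order perturbation.} Set $E:=X^{[k]}-X$. The Rayleigh--Schr\"odinger expansion around $X$ suggests
\[
1-|\langle\vv_1,\vv_1^{[k]}\rangle|^2\approx \sum_{j\neq 1}\frac{|\langle \vv_j,E\vv_1\rangle|^2}{(\lambda_1-\lambda_j)^2}.
\]
Conditionally on $X$, the matrix $E$ has independent entries on the positions of $S_k$ with variance of order $1$, and vanishes elsewhere. Using eigenvector delocalization $\|\vv_i\|_\infty=O(N^{-1/2+o(1)})$ together with the orthogonality $\langle\vv_j,\vv_1\rangle=0$, a direct second-moment computation (averaging also over $S_k$) yields $\E|\langle \vv_j,E\vv_1\rangle|^2\asymp k/N^2$. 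For the gap factor, edge rigidity furnishes $\lambda_1-\lambda_j\asymp N^{-1/6}j^{2/3}$, so $\sum_{j\neq 1}(\lambda_1-\lambda_j)^{-2}\asymp N^{1/3}$. Multiplying these two estimates gives the heuristic $\E[1-|\langle \vv_1,\vv_1^{[k]}\rangle|^2]\asymp k/N^{5/3}$, which exactly explains the threshold appearing in the statement.

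\textbf{Main obstacle and resolution.} The first-order bound just sketched is only honest when $\|E\|$ is small compared to $\lambda_1-\lambda_2\sim N^{-1/6}$; for $k\gg N^{5/3}$ the operator norm of $E$ exceeds this gap, the expansion breaks down, and the formal bound overshoots the trivial estimate~$1$. I would remove this obstruction by partitioning $S_k$ into $m=\lceil k/L\rceil$ disjoint blocks of size $L\ll N^{5/3}$ and forming the chain $Y_0=X,Y_1,\ldots,Y_m=X^{[k]}$, where $Y_{i+1}$ is obtained from $Y_i$ by resampling the $i$-th block. At each step the perturbation is genuinely small, the expansion above is valid, and the top eigenvector rotates by an angle $\theta_i$ with $\E\theta_i^2\asymp L/N^{5/3}$. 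Viewing $\vv_1(Y_i)$ as a random walk on the unit sphere, the squared angular increments add to a total of $\asymp m\cdot L/N^{5/3}=k/N^{5/3}\gg 1$, forcing the final vector to be nearly orthogonal to the initial one. The main difficulty is turning this random-walk picture into a rigorous bound: the per-step rotations are not independent because the eigenvectors and eigenvalue gaps of $Y_{i+1}$ depend on prior randomness. To close the gap I would invoke the isotropic local semicircle law to control quantities of the form $\E[|\langle \uu,\vv_1(Y_i)\rangle|^2]$ for $\uu$ adapted to the prior filtration, together with edge rigidity giving the required bound $\lambda_1(Y_i)-\lambda_j(Y_i)\asymp N^{-1/6}j^{2/3}$ uniformly along the chain.
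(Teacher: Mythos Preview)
This theorem is quoted from \cite{BLZ19} and not proved in the paper, but the paper does prove its analogue (Theorem~\ref{thm: main1}) by adapting the BLZ19 argument, so that is the relevant comparison. The method there is entirely different from yours: no eigenvector perturbation expansion and no chaining. Instead one applies Chatterjee's variance identity (Lemma~\ref{lem: superconcentration}) to $f=\lambda_1$. If $\mu_1$ is the top eigenvalue after resampling a single uniformly chosen entry, this yields
\[
\E\bigl[(\lambda_1-\mu_1)(\lambda_1^{[k]}-\mu_1^{[k]})\bigr]\ \lesssim\ \frac{\var(\lambda_1)}{k}\ \lesssim\ \frac{N^{-4/3}}{k}.
\]
The left side is then bounded \emph{below}: the variational characterization gives $\lambda_1-\mu_1\approx Z_{st}v_sv_t$ up to negligible errors, and a direct expectation computation (the analogue of Lemma~\ref{lem: expectation computation}) produces
\[
\E\bigl[(\lambda_1-\mu_1)(\lambda_1^{[k]}-\mu_1^{[k]})\bigr]\ \gtrsim\ \frac{1}{N^{3}}\,\E\bigl[\langle\vv_1,\vv_1^{[k]}\rangle^{2}\bigr]+o(N^{-3}).
\]
Combining the two displays gives $\E[\langle\vv_1,\vv_1^{[k]}\rangle^{2}]\lesssim N^{5/3}/k$ in one stroke.

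Your heuristic correctly locates the threshold, but the proposed proof has a genuine gap. The sentence ``the squared angular increments add to $k/N^{5/3}\gg 1$, forcing the final vector to be nearly orthogonal to the initial one'' is not an argument: large cumulative squared displacement along a chain on $S^{N-1}$ does not by itself imply $|\langle\vv_1(Y_0),\vv_1(Y_m)\rangle|\to 0$. What you need is a contraction estimate of the form
\[
\E\bigl[\,|\langle\uu,\vv_1(Y_{i+1})\rangle|^{2}\,\big|\,Y_i\bigr]\ \le\ \bigl(1-cL/N^{5/3}\bigr)\,|\langle\uu,\vv_1(Y_i)\rangle|^{2}\ +\ (\text{small}),
\]
and the difficulty is the additive term. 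Expanding $\vv_1(Y_{i+1})$ to first order in $E_i=Y_{i+1}-Y_i$ and squaring, the additive contribution is $\sum_{j\neq 1}(\lambda_1-\lambda_j)^{-2}\,\E|\langle\vv_j,E_i\vv_1\rangle|^2\,|\langle\uu,\vv_j(Y_i)\rangle|^2$, which is of the \emph{same} order $L/N^{5/3}$ as the contraction whenever $\uu$ carries nontrivial mass on the near-edge eigenvectors of $Y_i$. Controlling this requires tracking how $\uu=\vv_1(Y_0)$ redistributes across the evolving eigenbasis of $Y_i$, which is precisely the mixing statement you are trying to prove; the isotropic local law alone does not supply it. The superconcentration route sidesteps this circularity entirely by never following the eigenvector along the chain.
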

According to Theorem \ref{thm: BLZ thm1}, we can say, roughly, that $\vv_{1}$ is almost orthogonal to $\vv_{1}^{[k]}$ when more than $O(N^{5/3})$ entries are resampled. On the other hand, if $k$ is much smaller than $N^{5/3}$ (precise description below), the behavior of $\vv_{1}^{[k]}$ is completely different. In such a case, it can be shown that $\vv_{1}$ and $\vv_{1}^{[k]}$ lie almost on the same line, i.e., 
\begin{align}\label{eq: another ver of BLZ thm2}
\left|\left\langle \vv_{1},\vv_{1}^{[k]}\right\rangle\right| \sim 1.
\end{align}
Let us assume that $\vv_{1}$ and $\vv_{1}^{[k]}$ has almost the same direction, and consider 
\begin{align}
\lVert \vv_{1}-\vv_{1}^{[k]} \rVert_{\infty}.
\end{align}
It is well known that unit eigenvectors of Wigner matrices {\it delocalize}, i.e. the $\infty$-norm is bounded above by $(\log{N})^{C}N^{-1/2}$ for some $C>0$. Due to this delocalization result, it follows that
\begin{align}\label{eq: bound by delocalization}
\lVert \vv_{1}-\vv_{1}^{[k]} \rVert_{\infty} \le \frac{\log N^{C}}{\sqrt{N}}.
\end{align}
We need a sharper bound than \eqref{eq: bound by delocalization} in order to describe \eqref{eq: another ver of BLZ thm2}. This sharper bound is given in the next theorem.

\begin{figure}[h!]\label{fig: illustration}
	\begin{center}
		\includegraphics[scale=.35]{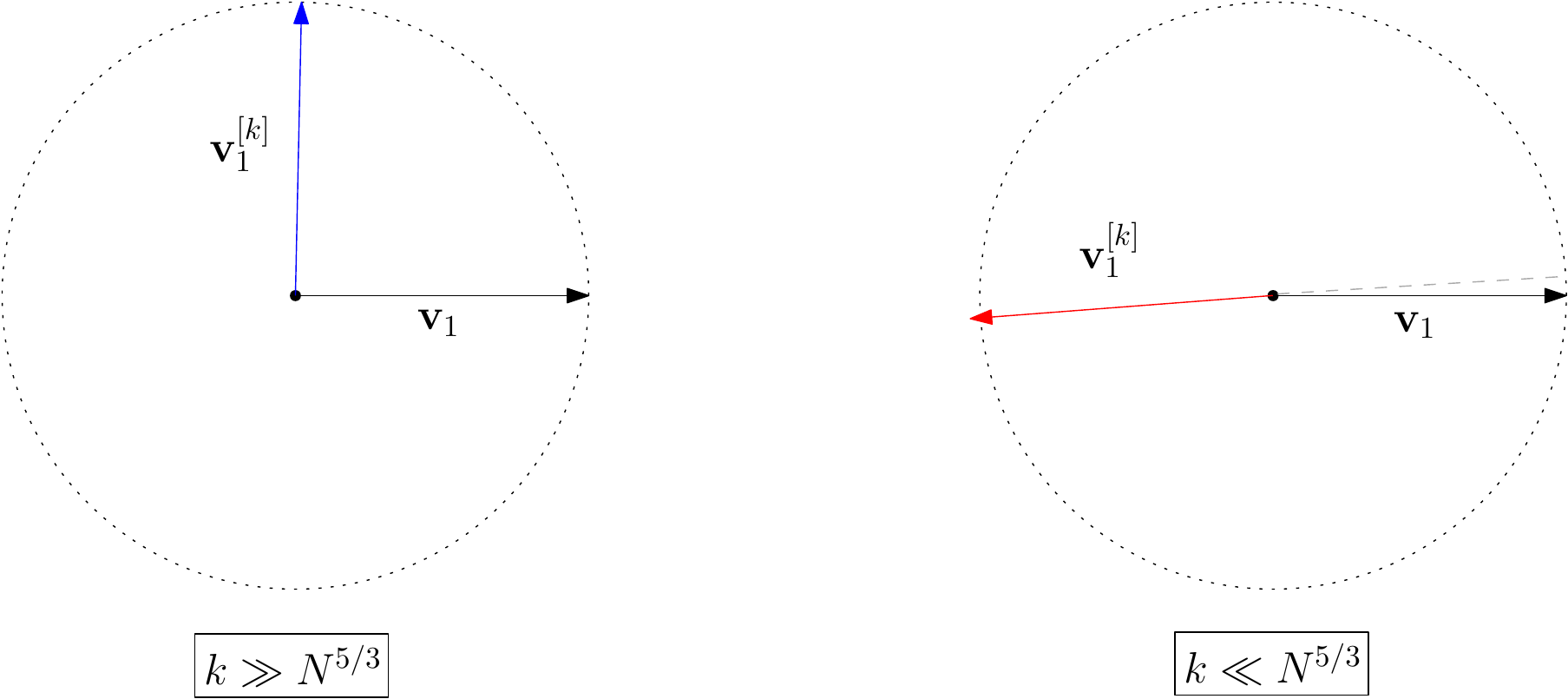}
		\caption{\small An illustration for Theorem \ref{thm: BLZ thm1} and Theorem \ref{thm: BLZ thm2}. On the left, the number $k$ of resampled entries is much larger than $N^{5/3}$. Then, according to Theorem \ref{thm: BLZ thm1}, $\vv_{1}$ are almost orthogonal to $\vv_{1}^{[k]}$. Contrarily, on the right, it is observed that $\vv_{1}$ and $\vv_{1}^{[k]}$ are almost on the same line when $k$ is far less than $N^{5/3}$. In that case, $\vv_{1}$ and $\vv_{1}^{[k]}$ have almost the same direction or almost opposite directions. The latter is illustrated in this figure.}
	\end{center}
\end{figure}

\begin{thm}[{\cite[Theorem 2]{BLZ19}}]\label{thm: BLZ thm2}
	Let $X$ and $X^{[k]}$ be as in Theorem \ref{thm: BLZ thm1}. There exists a constant $C>0$ such that
	\begin{align*}
	\max_{k \le N^{5/3}L^{-C}}\min_{s\in\{\pm 1\}} \sqrt{N}\lVert \vv_{1}-s\vv_{1}^{[k]} \rVert_{\infty}
	\end{align*}
	converges to $0$ in probability where $L$ is a logarithmic control parameter defined by
	\begin{align}\label{eq: logarithmic control parameter}
	L\equiv L_{N}:=(\log N)^{\log\log N},\quad N\ge 3.
	\end{align}
	As a result,
	\begin{align}
	\E\left[ \max_{1\le k \le N^{5/3}L^{-C}}\min_{s\in\{\pm 1\}} \lVert \vv_{1}-s\vv_{1}^{[k]} \rVert_{2} \right] = o(1).
	\end{align}
	where for $N\ge 3$
	\begin{align}
	L\equiv L_{N}:=(\log N)^{\log\log N}.
	\end{align}
\end{thm}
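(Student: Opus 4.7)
The plan is perturbation-theoretic: let $E := X^{[k]} - X$, a symmetric matrix whose support consists of at most $2k$ entries, each the difference of two independent sub-exponential random variables. Because $k \ll N^{5/3}$, I expect $\|E\|_{\mathrm{op}}$ to be small enough that $\lambda_{1}^{[k]}$ is well-separated from the remaining spectrum of $X$, which opens the door to an entrywise comparison of the two rank-one spectral projectors $P := \vv_{1}\vv_{1}^{T}$ and $P^{[k]} := \vv_{1}^{[k]}(\vv_{1}^{[k]})^{T}$.

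First I would prove a crude spectral-norm bound. Combining the sub-exponential assumption \eqref{eq: subexponential decay} with Bernstein-type concentration and a standard $\varepsilon$-net argument over the $k$-sparse directions allowed by the support of $E$, I would show that $\|E\|_{\mathrm{op}} \le L^{C}\sqrt{k/N}$ with high probability. Together with the edge rigidity for Wigner matrices (the spacing $\lambda_{1}-\lambda_{2}$ is of order $N^{-1/6}$ up to logarithmic factors, by Tracy--Widom asymptotics), this lets me pick a contour $\Gamma \subset \C$ of radius comparable to $N^{-1/6}L^{-C'}$ around $\lambda_{1}$ that, by Weyl's inequality, encloses $\lambda_{1}^{[k]}$ and no other eigenvalue of either matrix.

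Then I would exploit the resolvent identity
\begin{align*}
P^{[k]}-P \;=\; \frac{1}{2\pi i}\oint_{\Gamma}\bigl[(z-X^{[k]})^{-1}-(z-X)^{-1}\bigr]\,dz \;=\; \frac{1}{2\pi i}\oint_{\Gamma} G(z)\,E\,G^{[k]}(z)\,dz,
\end{align*}
with $G(z) := (z-X)^{-1}$ and $G^{[k]}(z) := (z-X^{[k]})^{-1}$. Iterating the identity once more isolates the leading term $\frac{1}{2\pi i}\oint_{\Gamma} G E G$; the remainder carries an extra factor of $\|E\|_{\mathrm{op}}/\mathrm{dist}(z,\mathrm{spec})$, which is small on $\Gamma$ by the previous step. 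To extract entrywise bounds on $(P^{[k]}-P)_{ij}$, I would apply the edge local semicircle law of Erd\H{o}s--Yau--Yin type to control $|G_{ab}(z)|$ uniformly on $\Gamma$ by $L^{C''}/\sqrt{N}$ for off-diagonal indices, and by a quantity of order $1$ plus fluctuations for diagonal indices. Since $E$ has only $O(k)$ nonzero entries, summing $|G_{ia}(z)E_{ab}G_{bi}(z)|$ over the support of $E$ and integrating over $\Gamma$ gives $|(P^{[k]}-P)_{ii}|,|(P^{[k]}-P)_{ij}|=o(1/N)$ provided $k \ll N^{5/3}L^{-C}$. A short algebraic computation then converts this into $\sqrt{N}\min_{s\in\{\pm 1\}}\|\vv_{1}-s\vv_{1}^{[k]}\|_{\infty}=o(1)$, since $(P-P^{[k]})\vv_{1} = \vv_{1} - \langle \vv_{1},\vv_{1}^{[k]}\rangle\vv_{1}^{[k]}$ and the inner product is $\approx \pm 1$ by the $\ell^{2}$ version of the same projector bound.

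The main obstacle is the tension built into the choice of the contour $\Gamma$: it must hug $\lambda_{1}$ tightly enough to exclude $\lambda_{2}$ on the order of $N^{-1/6}$, yet stay far enough from the real axis that $\mathrm{Im}(z)$ exceeds the optimal edge scale $N^{-2/3}$ at which the local law remains effective. The threshold $k \ll N^{5/3}L^{-C}$ emerges precisely from balancing $k\cdot (L^{C''}/\sqrt{N})^{2}$ against the gap scale on $\Gamma$ and the target accuracy $N^{-1}$ for the projector entries; the polynomial in $L$ in the exponent absorbs all cumulative logarithmic losses from the local law and from a union bound over the admissible values of $k$. Finally, the $\ell^{2}$-in-expectation statement follows from the $\ell^{\infty}$ version combined with the delocalization bound and the union bound over $k \le N^{5/3}L^{-C}$.
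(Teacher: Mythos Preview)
This theorem is quoted from \cite{BLZ19} rather than proved in the present paper; the paper establishes its sparse analogue, Theorem~\ref{thm: main2}, by adapting the method of \cite{BLZ19}, so that method is the relevant benchmark. Against it, your contour/operator-norm scheme has two genuine gaps.

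\emph{Weyl cannot set up the contour.} In the variance-$1$ normalisation of Theorem~\ref{thm: BLZ thm1} the gap $\lambda_{1}-\lambda_{2}$ is of order $N^{-1/6}$, while your own estimate $\|E\|_{\mathrm{op}}\lesssim L^{C}\sqrt{k/N}$ is of order $N^{1/3}$ at $k\asymp N^{5/3}$, larger than the gap by a factor $N^{1/2}$. Thus for every $k\gg N^{2/3}$ Weyl's inequality no longer guarantees that a contour of radius $\asymp N^{-1/6}$ around $\lambda_{1}$ encloses $\lambda_{1}^{[k]}$ and excludes $\lambda_{2}^{[k]}$, and your representation $P^{[k]}=\frac{1}{2\pi i}\oint_{\Gamma}G^{[k]}$ is unjustified. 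In \cite{BLZ19} (and in the paper, Proposition~\ref{lem: lem12 in BLZ}) the closeness $|\lambda_{1}-\lambda_{1}^{[k]}|\le N^{-2/3}L^{-C}$ is \emph{deduced from} the entrywise resolvent comparison, not obtained beforehand via a crude operator-norm bound.

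\emph{The triangle inequality misses the $\sqrt{k}$ gain.} Bounding $(GEG)_{ij}$ by $\sum_{(a,b)\in\mathrm{supp}\,E}|G_{ia}||E_{ab}||G_{bj}|$ throws away all cancellation. Even with the correct edge resolvent bound $|G_{ab}|\lesssim L^{C}N^{-1/3}$ (cf.\ Lemma~\ref{lem: lem10 in BLZ}; your $N^{-1/2}$ is the bulk bound), this yields at best $k\ll N^{5/6}$, not $k\ll N^{5/3}$. The missing factor is exactly $\sqrt{k}$, and it is supplied in \cite{BLZ19} and in the proof of Lemma~\ref{lem: lem13 in BLZ} here by a martingale argument: one fixes the single point $z=\lambda_{1}+i\eta$ with $\eta=N^{-2/3}L^{-C_{1}}$, telescopes $R^{[k]}(z)_{ij}-R(z)_{ij}$ into $k$ one-entry increments along a filtration, expands each increment to third order via the resolvent identity, and applies Azuma--Hoeffding. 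The passage from resolvent to eigenvector then goes through $\eta\,\mathrm{Im}\,R(z)_{ij}\approx v_{1,i}v_{1,j}$ at that single $z$ (Lemma~\ref{lem: lem14 in BLZ}), so no contour through small-$\mathrm{Im}\,z$ regions is ever needed, which also dissolves the ``main obstacle'' you flag at the end.
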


\begin{figure}[h!]\label{fig: simulation for wigner case}
	\begin{center}
		\subfloat[$N=5000$, $k=\lfloor N^{11/6} \rfloor$]{{\includegraphics[width=7cm]{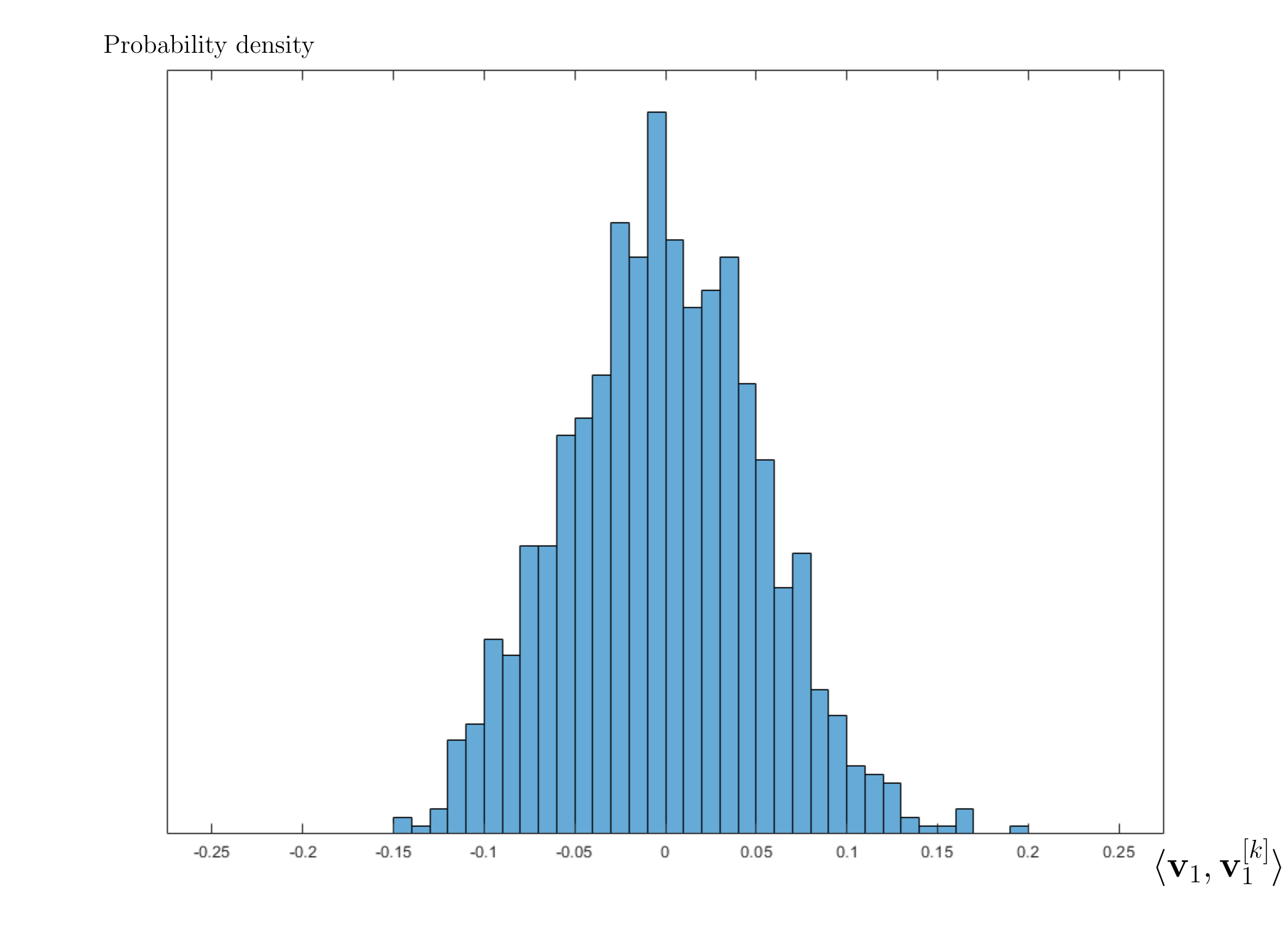}}}
		\qquad
		\subfloat[$N=5000$, $k=\lfloor N^{4/3} \rfloor$]{{\includegraphics[width=7cm]{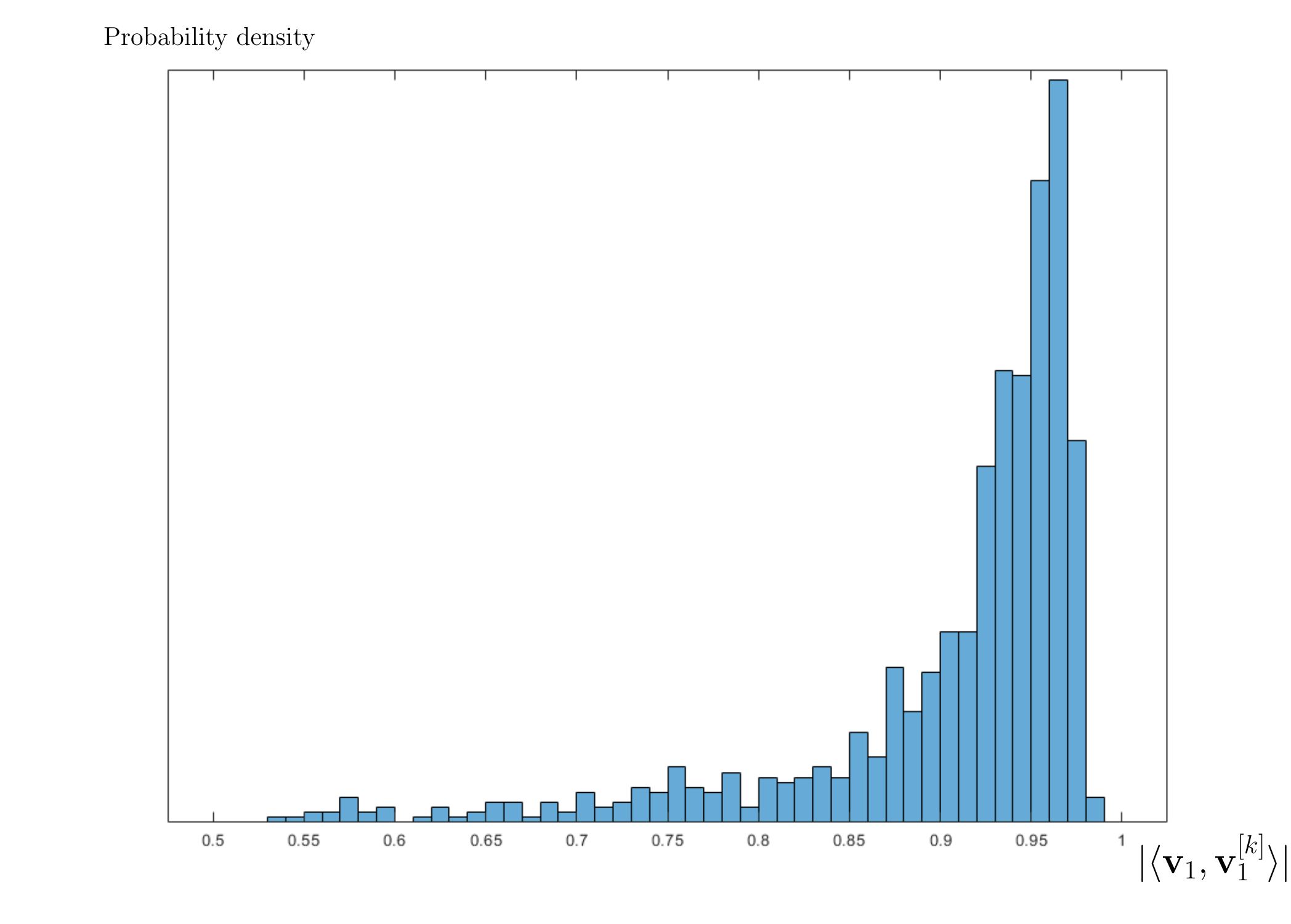}}}
		\caption{\small Simulation for the Wigner random matrices. On the left (a), $k=\lfloor N^{11/6} \rfloor\gg N^{5/3}$ so we observed the realizations of inner product $\langle \vv_{1},\vv_{1}^{[k]} \rangle $ gather around $0$ as expected in Theorem \ref{thm: BLZ thm1}. On the right (b), the contrary case, $k=\lfloor N^{4/3} \rfloor\ll N^{5/3}$, is described. The absolute value of $\langle \vv_{1},\vv_{1}^{[k]} \rangle $ tends to be close to $1$, which is consistent with Theorem \ref{thm: BLZ thm2}.}
	\end{center}
\end{figure}

Combining Theorems \ref{thm: BLZ thm1} and \ref{thm: BLZ thm2}, we can see a phase transition of $\vv_{1}^{[k]}$ around $k\sim N^{5/3}$. Now we raise the following question.
\begin{itemize}
	\item Can we get an analogous result for the class of {\it sparse random matrices}?
\end{itemize}
There are a lot of interesting classes of random matrices besides Wigner matrices. One of the most important is the class of { sparse random matrices}. Roughly speaking, a given random matrix is said to be sparse if it has many zero entries (see Remark \ref{def: sparse RM} for more detailed description). The adjacency matrix of a {\it sparse Erd\H{o}s-R\'{e}nyi graph} is one of the the typical examples. The Erd\H{o}s-R\'{e}nyi graph is a random graph connecting any two distinct nodes by an edge, independently, with probability $p$. Thus, the adjacency matrix of an Erd\H{o}s-R\'{e}nyi graph is a symmetric random matrix and its entries have the Bernoulli distribution with parameter $p$.\\
\indent Let $A\equiv A_{N}$ be the adjacency matrix of the \ER~graph on $N$ vertices with edge density $p$. The matrix $A=(a_{ij})$ is an $N\times N$ symmetric matrix and all entries are independent up to symmetry. 
The edge density $p$ is replaced by a  {\it sparsity parameter} $q\equiv q(N)$ by setting $q=\sqrt{pN}$. We assume the sparsity parameter $q$ satisfies
\begin{align}
(\log N)^{\log\log N} \ll q \ll \sqrt{N}.
\end{align}
Under a normalization, each entry of the matrix $A=(a_{ij})_{1\le i,j\le N}$ is distributed as follows. If $i\le j$,
\begin{align*}
a_{ij}=\begin{cases}
\zeta/q &  \text{with probability} \; q^{2}/N, \\
& \\
0 & \text{with probability} \; 1-q^{2}/N,
\end{cases}
\end{align*}
where
\begin{align}\label{eq: def of a factor for normalization}
\zeta:=(1-q^{2}/N)^{-1/2}.
\end{align}
From the matrix $A$, we generate the new $N\times N$ matrix $H=(h_{ij})$ by extracting the mean. Each entry $h_{ij}$ is defined according to
\begin{align}
h_{ij}=a_{ij}-\E a_{ij},
\end{align}
and satisfies the moment conditions 
\begin{align}\label{eq: moment condition}
\E |h_{ij}|^{m}\le\frac{1}{Nq^{m-2}} \quad(m\ge 2).
\end{align}

\indent Many outstanding results have been achieved for the \ER~graph \cite{BBK19,BHY17,EKYY12,EKYY13,HKM18}. Especially, it is well-known that the unit eigenvectors of the adjacency matrices of the \ER~graph delocalize under some general conditions. Furthermore, the top eigenvector converges to 
\begin{align}
\frac{1}{\sqrt{N}}(1,\cdots,1)
\end{align}
$N^{-1/2}(1,\cdots,1)\in\C^{N}$ in the $\ell^{2}$-sense with very high probability (see Lemma \ref{lem: delocalization}). This implies that the direction of the top eigenvector is almost deterministic. Therefore the naive answer to the question raised above would be ``No'' for sparse random matrices with nonzero mean.\\
\indent However, it should be noted that the largest eigenvalue of sparse \ER~graphs behaves differently when compared to that of Wigner matrices. In both of these classes of random matrices the eigenvalue distribution converges to the {\it semicircle distribution} (see \eqref{eq: semicircle dist}), but in the case of Wigner matrices, the largest eigenvalue sticks to the edge of the semicircle distribution. In contrast, the largest eigenvalue of the sparse \ER~model stays far away from the support of the semicircle distribution \cite{EKYY13, KS03}. In the sparse \ER~model, the second largest eigenvalue is the largest eigenvalue converging to the edge of the semicircle distribution \cite{BBK17,EKYY13}. 
Thus, one might expect that the {\it second top eigenvector}, a unit eigenvector associated with the second largest eigenvalue, shows the desired phase transition behavior.\\
\indent When we consider an eigenvector not associated with the largest eigenvalue, there is one technical difficulty. In \cite{BLZ19}, the authors explicitly use the fact that the top-eigenvector maximizes a quadratic form to prove Theorem \ref{thm: BLZ thm1}, in the following way. Let $X$ and $Y$ be a Wigner random matrix and the one obtained from $X$ by resampling a single entry. Let us say $\vv$ is the top-eigenvector of $X$ and $\ww$ is that of $Y$. Then, by definition, we have
\begin{align}
\langle \ww,(X-Y)\ww \rangle \le \langle \vv,X\vv \rangle - \langle \ww, Y\ww\rangle\le \langle \vv,(X-Y)\vv\rangle
\end{align}
because $\vv$ and $\ww$ maximize $\langle \vv,X\vv \rangle$ and $\langle \ww, Y\ww\rangle$ respectively. Consequently, we need to develop some way to circumvent this issue when considering the second top eigenvector. Actually, it can be done using the fact that the largest eigenvalue is far from the second largest eigenvalue in the sparse \ER~model model.\\
\indent Indeed, we prove that the second top eigenvector of sparse \ER~graphs behaves exactly like the top eigenvector of Wigner matrices under the resampling procedure, when we assume a certain condition on the {\it sparsity parameter}. For the adjacency matrix of an Erd\H{o}s-R\'{e}nyi graph, the sparsity parameter is associated with the expected degree of each node. We shall specify why a certain condition on the sparsity parameter is needed. Interestingly, this is related with the tail bound of the gap between adjacent eigenvalues.\\
\indent It should be noted that research on sparse random matrices have real-world applications. The Erd\H{o}s-R\'{e}nyi graph is a standard model for a random network. In terms of this random graph, resampling an entry corresponds to creating or removing an edge with some probability. Thus, resampling in sparse random matrices can be regarded as  a perturbation of a random network. Moreover, when we analyze data in matrix form, eigenvectors tend to have more information than their associated eigenvalues. Thus, we hope that the above-described phase transition of second-top eigenvectors will find relevance in network theory and the other information sciences.

\section{Results}
Consider the adjacency matrix $A$ of an \ER~graph on $N$ vertices with edge density $p=q^{2}/N$. We consistently assume
\begin{align}
L \ll q \ll \sqrt{N}
\end{align}
where $L$ is the logarithmic control parameter in \eqref{eq: logarithmic control parameter}. Let $A'=(a_{ij}')$ be an independent copy of $A$. We obtain $A^{[k]}=(a_{ij}^{[k]})$ from the adjacency matrix $A$ by following Definition \ref{def: resampling}, the resampling procedure. 
Note that, for $i\le j$, the entry $a_{ij}^{[k]}$ is defined according to 
\begin{align}
a_{ij}^{[k]} = \begin{cases}
a_{ij}' & (ij)\in S_{k}, \\
& \\
a_{ij}  & (ij)\notin S_{k}.
\end{cases}
\end{align}

Let $\lambda_{1}\ge\cdots\ge\lambda_{N}$ be the ordered eigenvalues of $A$ and, let $\vv_{1},\cdots,\vv_{N}$ be associated unit eigenvectors of $A$. Similarly, we use the notation $\lambda_{1}^{[k]}\ge\cdots\ge\lambda_{N}^{[k]}$ and $\vv_{1}^{[k]},\cdots,\vv_{N}^{[k]}$ for the ordered eigenvalues and the associated unit eigenvectors of $A^{[k]}$. Note that Luh and Vu recently showed that sparse random matrices have simple spectrum \cite{LV18}. This implies $\lambda_{1}>\cdots>\lambda_{N}$ a.s. for large $N$. As we already discussed in the introduction, due to Lemma \ref{lem: delocalization}, $\vv_{1}$ and $\vv_{1}^{[k]}$ are almost collinear for any $k$ and large $N$. Thus, we deal with the second top eigenvector.

\begin{thm}[Excessive resampling]\label{thm: main1}
	If $q=N^{b}$ with $b\in(\frac{4}{9},\frac{1}{2})$ and $k=N^{\tau}$ with $\tau\in(\frac{5}{3},2)$, then
	\begin{align*}
	\E\left\lvert \left\langle \vv_{2},\vv_{2}^{[k]} \right\rangle \right\rvert = o(1).
	\end{align*}
\end{thm}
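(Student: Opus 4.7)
My plan is to mirror the Bordenave--Lugosi--Zhivotovskiy (BLZ) proof of Theorem \ref{thm: BLZ thm1}, adapting its variational step to the second eigenvalue. The obstacle identified in the introduction---that $\vv_2$ does not maximize an unconstrained Rayleigh quotient---I would address using the constrained characterization
\begin{align*}
\lambda_2(M) = \max_{\|\vv\| = 1,\, \vv \perp \vv_1(M)} \langle \vv, M\vv\rangle,
\end{align*}
together with the fact that in the sparse \ER~model $\vv_1(M)$ is essentially deterministic: by Lemma \ref{lem: delocalization}, $\vv_1$ lies within $\ell^{2}$-distance $o(1)$ of the constant vector $N^{-1/2}(1,\dots,1)$ with very high probability, while the spectral gap $\lambda_1 - \lambda_2$ is of order $q$. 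The moving orthogonality constraint can therefore be treated as a perturbation of the fixed constraint $\vv \perp \mathbf{1}$.

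Concretely, I would first assemble the inputs needed along the interpolation $A = A^{(0)}, A^{(1)}, \dots, A^{(k)} = A^{[k]}$ obtained by resampling one entry of $S_k$ at a time: rigidity of $\lambda_1,\lambda_2$ with $\lambda_1 - \lambda_2 \ge c q$, the $\ell^{\infty}$-delocalization $\|\vv_2\|_\infty \le L^C N^{-1/2}$, the $\ell^{2}$-closeness of $\vv_1$ to $N^{-1/2}\mathbf{1}$, and a high-probability lower bound on the local gap $\lambda_2 - \lambda_3$ (the last via simple-spectrum type results from \cite{LV18}). All of these are available for sparse \ER~matrices with polylogarithmic tail factors, so a union bound over the $k \le N^2$ interpolants is harmlessly absorbed in $L^C$. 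For each single-entry swap $A^{(j-1)} \to A^{(j)}$, the constrained Rayleigh characterization then gives a BLZ-style sandwich
\begin{align*}
\langle \vv_2^{(j)}, \Delta^{(j)} \vv_2^{(j)} \rangle - E_j \;\le\; \lambda_2^{(j-1)} - \lambda_2^{(j)} \;\le\; \langle \vv_2^{(j-1)}, \Delta^{(j)} \vv_2^{(j-1)}\rangle + E_j,
\end{align*}
where $\Delta^{(j)} := A^{(j-1)} - A^{(j)}$ and $E_j$ absorbs the drift of the orthogonality constraint from $\vv_1^{(j-1)}$ to $\vv_1^{(j)}$. Using the $q$-scale gap $\lambda_1 - \lambda_2$ and the closeness of $\vv_1$ to $N^{-1/2}\mathbf{1}$, the error $E_j$ can be bounded by $q^{-1}$ times the sparse-\ER~entry scale $1/q$.

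Finally I would follow BLZ's Efron--Stein-type moment bookkeeping, telescoping the sandwich inequalities across the $k$ swaps, to obtain an upper bound of the shape
\begin{align*}
\E \langle \vv_2, \vv_2^{[k]} \rangle^2 \;\lesssim\; L^C \cdot \frac{N^{5/3}}{k} + (\text{remainder that is } o(1) \text{ when } b > \tfrac{4}{9}),
\end{align*}
from which Theorem \ref{thm: main1} follows via Cauchy--Schwarz.

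The main obstacle is the accumulated error $\sum_{j=1}^k E_j$ from the sliding orthogonality constraint: the balance between the per-swap squared-error contribution of size $O(q^{-2})$ and the BLZ main term $N^{5/3}/k$ is what pins the condition $b > 4/9$ in the hypothesis. A secondary difficulty is to propagate the rigidity, delocalization and local-gap inputs \emph{simultaneously} along the $k$-step interpolation with uniform constants, which here is exacerbated by the sparsity and forces us to work at the scale $L = (\log N)^{\log \log N}$ rather than with polynomial logarithmic factors. Extracting the sharp threshold $4/9$ will likely require combining the mean-zero cancellation in $h_{ij}$ with the moment bounds \eqref{eq: moment condition}, rather than a crude triangle-inequality estimate on $E_j$.
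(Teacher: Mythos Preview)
Your variational ingredient is correct in spirit and matches the paper's Lemma~\ref{lem: small perturbation variation}: using the gap $\lambda_1-\lambda_2\asymp q$ and delocalization one indeed obtains a sandwich
\[
Z_{st}\,u_s u_t-\eps_0\ \le\ \lambda_2-\mu_2\ \le\ Z_{st}\,v_s v_t+\eps_0,
\]
where $(st)$ is a \emph{single} uniformly random index, $\uu_2$ is the second eigenvector of the single-entry perturbation $B_{(st)}$, and $\eps_0=O(L^{D}q^{-3}N^{-2})$ (not $O(q^{-2})$ as you write; the overlap $|\langle\uu_2,\vv_1\rangle|$ is $O(L^{D}q^{-2}N^{-1})$, and one loses an extra $q^{-1}N^{-1}$ from the rank-one perturbation and delocalized $\vv_1$). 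With your crude bound $E_j=O(q^{-2})$ the cross terms would already swamp the target scale $N^{-3}$ for every $b<1/2$, so that part of the heuristic must be revised.

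More importantly, the step you have not identified is the one that actually produces the threshold $b>4/9$. The sandwich above has $\uu_2$ on the lower side and $\vv_2$ on the upper side; when you form the product $(\lambda_2-\mu_2)(\lambda_2^{[k]}-\mu_2^{[k]})$ and try to bound it from below by a quantity involving only $\vv_2,\vv_2^{[k]}$, you must replace $u_s u_t$ by $v_s v_t$ (and $u_s^{[k]}u_t^{[k]}$ by $v_s^{[k]}v_t^{[k]}$). This requires an $\ell^\infty$ stability estimate of the form
\[
\max_{1\le i,j\le N}\min_{s\in\{\pm1\}}\bigl\|s\vv_2-\uu_2^{(ij)}\bigr\|_\infty\le N^{-1/2-\delta}
\]
on an event of probability $1-O(N^{-\rho})$ with $2b+\rho>1$; this is the paper's Lemma~\ref{lem: eigen vec small perturbation}. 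Its proof hinges on the \emph{quantitative} gap tail bound $\prob(\lambda_2-\lambda_3\le cN^{-1-\rho})=O(N^{-\rho}\log N)$ from \cite{LL19}, combined with edge rigidity, and the optimization over the auxiliary parameters $(\rho,\theta)$ is exactly what forces $b>4/9$. The condition $2b+\rho>1$ is not decorative: it is what makes the contribution $\E[|Z_{st}Z_{st}^{[k]}|\indic_{\mathcal{E}_2^c}]\cdot L^{D}N^{-2}=O(L^{D}q^{-2}N^{-2-\rho})$ be $o(N^{-3})$. A mere qualitative simple-spectrum input from \cite{LV18} is not enough here.

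Finally, the BLZ bookkeeping is not a telescope over the $k$ swaps. One applies the Chatterjee noise-sensitivity inequality (Lemma~\ref{lem: superconcentration}) once, with $f=\lambda_2$, to obtain $\E[(\lambda_2-\mu_2)(\lambda_2^{[k]}-\mu_2^{[k]})]\le C\,\mathrm{Var}(\lambda_2)/k$; the whole analysis then concerns a \emph{single} random entry resample, not the full chain $A^{(0)},\dots,A^{(k)}$. The monotonicity Lemma~\ref{lem: monotonicity} is additionally needed to reduce from $\tau<2$ to $\tau$ slightly below $2$ so that $|S_k|\ll N^2$ in Lemma~\ref{lem: expectation computation}. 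Your outline does not yet contain either of these pieces.
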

\begin{remark}
	The condition $b\in(\frac{4}{9},\frac{1}{2})$ comes from the tail bound for gaps between the adjacent eigenvalues. It can be improved if we can obtain a sharper estimate for the spacing between two adjacent eigenvalues converging to the edge.  
\end{remark}

\begin{thm}[Small resampling]\label{thm: main2}
	Suppose $q=N^{b}$ with $b\in (\frac{1}{3},\frac{1}{2})$ and $\tau>0$. Assume
	\begin{align}\label{eq: sparsity regime condition}
	\tau < 2b+2/3.
	\end{align}
	Then,
	\begin{align*}
	\max_{k \le N^{\tau}}\min_{s\in\{\pm 1\}} \sqrt{N}\lVert \vv_{2}-s\vv_{2}^{[k]} \rVert_{\infty}
	\end{align*}
	converges to $0$ in probability. As a result,
	\begin{align}
	\lim_{N\to\infty}\E\left\lvert \left\langle \vv_{2},\vv_{2}^{[k]} \right\rangle \right\rvert = 1.
	\end{align}
\end{thm}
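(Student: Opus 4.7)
The plan is to follow the single-entry interpolation argument of Theorem~2 of \cite{BLZ19}, adapted to the second top eigenvector and the sparse normalization. Enumerate $S_k=\{(i_1j_1),\dots,(i_kj_k)\}$ in a fixed order, let $A^{(m)}$ be the matrix obtained from $A$ by resampling the first $m$ entries of $S_k$ (so $A^{(0)}=A$, $A^{(k)}=A^{[k]}$), and denote its second unit eigenvector by $\vv_2^{(m)}$. Pick signs $s_m\in\{\pm 1\}$ with $\langle\vv_2^{(m)},s_m\vv_2^{(m-1)}\rangle\ge 0$, and write the telescoping identity
\[
s\vv_2^{[k]}-\vv_2 = \sum_{m=1}^k \epsilon_m\bigl(\vv_2^{(m)}-s_m\vv_2^{(m-1)}\bigr),\qquad \epsilon_m:=\prod_{j>m}s_j,\ \ s:=\prod_{m=1}^k s_m.
\]
The task reduces to a uniform coordinate-wise bound on this sum.

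Each step is a rank-at-most-two symmetric perturbation $\Delta^{(m)}:=A^{(m)}-A^{(m-1)}=\delta_m(E_{i_mj_m}+E_{j_mi_m})$ with $|\delta_m|\le 2\zeta/q$ deterministically and $\E\delta_m^2=O(1/N)$. First-order perturbation theory (equivalently, Sherman--Morrison) yields $\vv_2^{(m)}-s_m\vv_2^{(m-1)}=\delta_m Y_m+O(\delta_m^2)$ with the $\FF_{m-1}$-measurable vector
\[
Y_m(\ell)=\sum_{j\ne 2}\frac{(\vv_j^{(m-1)})_{i_m}(\vv_2^{(m-1)})_{j_m}+(\vv_j^{(m-1)})_{j_m}(\vv_2^{(m-1)})_{i_m}}{\lambda_2^{(m-1)}-\lambda_j^{(m-1)}}\,(\vv_j^{(m-1)})_\ell,
\]
the quadratic remainder being negligible once an adjacent-gap bound $|\lambda_2^{(m-1)}-\lambda_3^{(m-1)}|\gtrsim N^{-2/3-\varepsilon}$ is in place. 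Combining eigenvector delocalization (Lemma~\ref{lem: delocalization}) with the entry-wise isotropic local law for sparse \ER~resolvents \cite{EKYY13,HKM18} evaluated at spectral parameter $\lambda_2^{(m-1)}+i\eta$ for $\eta$ of the order of that gap, one extracts a quantitative estimate of the form $\E[\delta_m^2\,\lVert Y_m\rVert_\infty^2\mid\FF_{m-1}]\lesssim L^{C}/(q^2N^{5/3})$, uniformly in $m$ on an overwhelming-probability event.

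The centered increments $(\delta_m-\E[\delta_m\mid\FF_{m-1}])\,Y_m(\ell)$ form a martingale difference sequence for the filtration $\{\FF_m\}$ in which the resampled entries are revealed one at a time. Freedman's inequality, combined with the conditional variance estimate above and the deterministic step bound $|\delta_m|\,\lVert Y_m\rVert_\infty\lesssim L^{C}/(q\sqrt N)$, together with a union bound over the $N$ coordinates and a monotonicity-style chaining over $k\le N^\tau$, produces
\[
\sqrt N\,\max_{k\le N^\tau}\,\min_{s\in\{\pm 1\}}\,\bigl\lVert\vv_2-s\vv_2^{[k]}\bigr\rVert_\infty \lesssim L^{C'}\sqrt{N^\tau/(q^2N^{2/3})}\longrightarrow 0
\]
in probability whenever $\tau<2b+2/3$. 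The bias contribution $\sum_m\E[\delta_m\mid\FF_{m-1}]\,Y_m(\ell)$ is of the same or smaller order once the sparsity of $A$ is taken into account. The inner-product conclusion then follows from $\lVert\vv_2-s\vv_2^{[k]}\rVert_2^2\le N\,\lVert\vv_2-s\vv_2^{[k]}\rVert_\infty^2\to 0$ together with the identity $|\langle\vv_2,\vv_2^{[k]}\rangle|=1-\tfrac12\lVert\vv_2-s\vv_2^{[k]}\rVert_2^2$, promoted to convergence of the expectation by unit-norm uniform integrability.

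The main technical obstacle is to secure the entry-wise local law, edge rigidity, and adjacent-gap lower bound uniformly along the entire interpolation chain $\{A^{(m)}\}_{m=0}^k$ on a single overwhelming-probability event. Since each individual $A^{(m)}$ is distributed as $A$, these estimates hold with high probability for each fixed $m$, but making them simultaneous in $m$ requires a careful union bound or a direct stability argument. The required $m$-independent lower-tail estimate for $|\lambda_2^{(m)}-\lambda_3^{(m)}|$ is available for sparse \ER~matrices only when $b>1/3$, at which level edge fluctuations operate at the expected $N^{-2/3}$ scale; this is precisely what restricts the theorem to $b\in(1/3,1/2)$.
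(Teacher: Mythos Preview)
Your approach differs fundamentally from the paper's, and the difference matters. The paper does \emph{not} telescope eigenvectors along the chain $A^{(0)},\dots,A^{(k)}$. Instead it telescopes \emph{resolvent entries}: it proves (Lemma~\ref{lem: lem13 in BLZ}) that $\max_{i,j}N\eta\,|R^{[k]}(z)_{ij}-R(z)_{ij}|\le L^{-2}$ for $z=E+i\eta$ near the edge with $\eta=N^{-2/3}L^{-C_1}$, via the resolvent identity and Azuma--Hoeffding for the martingale of resolvent increments. The only inputs along the chain are the local-law bounds $|R^{[t]}_{ij}|\le L^CN^{-1/3}$ (off-diagonal) and $|R^{[t]}_{ii}|\le C$, which hold with overwhelming probability and therefore survive a union bound over $t\le N^\tau$. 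The spectral gap $\lambda_2-\lambda_3$ is invoked only \emph{once}, at the very end (Lemma~\ref{lem: lem14 in BLZ}), to pass from $\mathrm{Im}\,R(z)_{ij}$ to $v_{2,i}v_{2,j}$ at the two endpoints $A$ and $A^{[k]}$; there the gap bound $\mathbb{P}(\lambda_2-\lambda_3>\delta N^{-2/3})\ge 1-\epsilon$ (a fixed-$\epsilon$ statement, not overwhelming) suffices because only two matrices are involved. A separate combinatorial argument then extracts the sign-consistent $\ell^\infty$ bound from the product bound $N|v_{2,i}v_{2,j}-v_{2,i}^{[k]}v_{2,j}^{[k]}|\le L^{-2}$.

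Your proposal, by contrast, needs $|\lambda_2^{(m-1)}-\lambda_3^{(m-1)}|\gtrsim N^{-2/3-\varepsilon}$ at \emph{every} step $m\le N^\tau$, since your $Y_m$ contains the factor $(\lambda_2^{(m-1)}-\lambda_3^{(m-1)})^{-1}$ explicitly. You identify this as the main obstacle, but the resolution you sketch does not work: the available edge-gap estimates for sparse \ER\ (including the one the paper uses, from \cite{EKYY12}) give only $\mathbb{P}(\lambda_2-\lambda_3>\delta N^{-2/3})\ge 1-\epsilon$ for fixed $\epsilon$, and Lemma~\ref{lem: tail bound for gaps} gives only polynomial decay $O(N^{-\rho}\log N)$ at scale $N^{-1-\rho}$. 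Neither survives a union bound over $N^\tau$ intermediate matrices. A ``direct stability argument'' for the gap along the chain would itself require eigenvalue control at scale $N^{-2/3}$---essentially Proposition~\ref{lem: lem12 in BLZ}---which the paper proves \emph{using} the resolvent comparison you are trying to avoid. So the gap-along-the-chain requirement is a genuine hole, and the resolvent route is precisely what fills it.
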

\begin{remark}
	The available sparsity regime is determined by $\tau < 2b+\frac{2}{3}$. This is due to Lemma \ref{lem: lem13 in BLZ}, an estimate on the variation of resolvents according to the resampling.
\end{remark}

\begin{figure}[h!]\label{fig: phase diagram}
	\begin{center}
		\includegraphics[scale=.3]{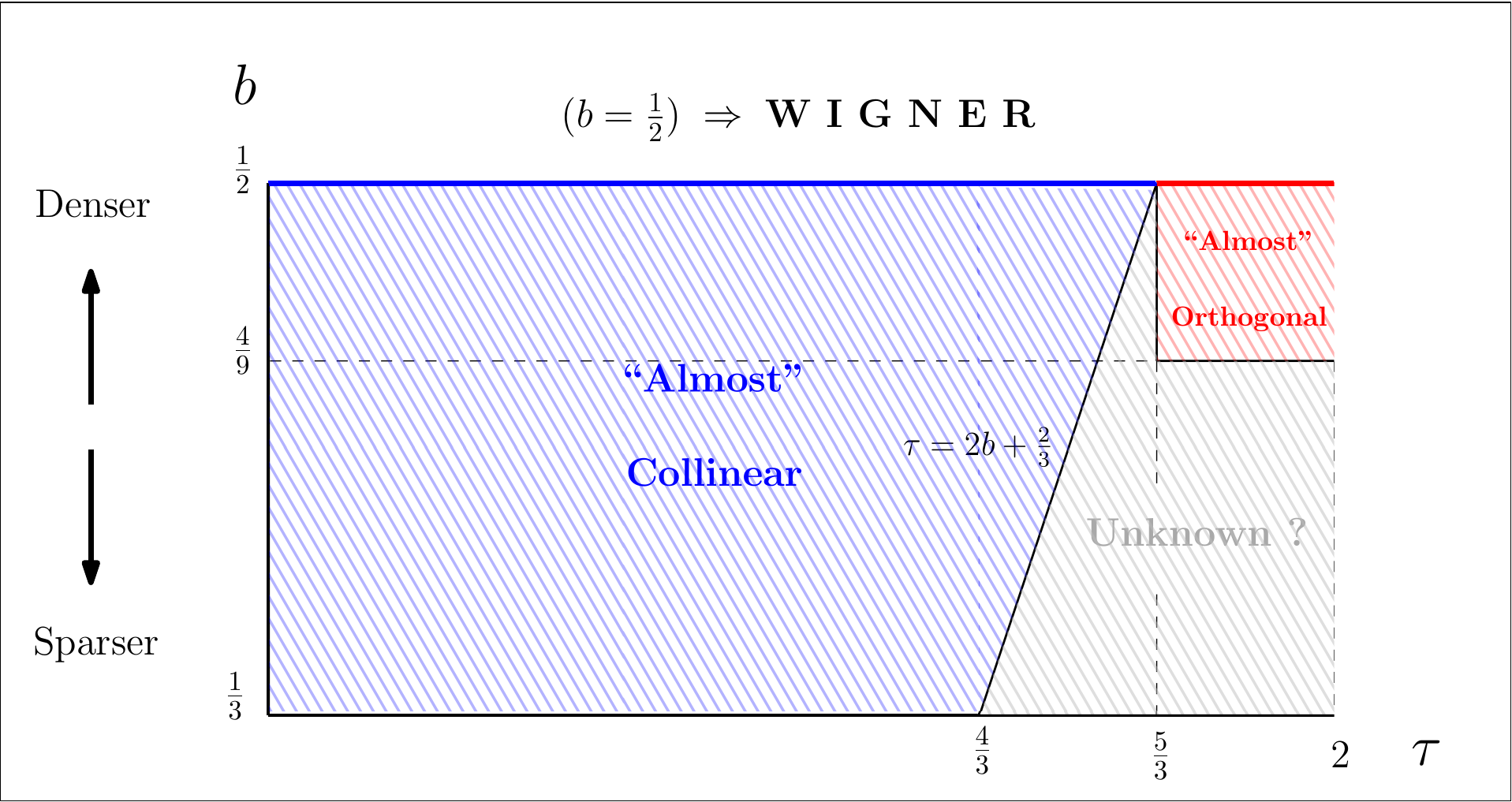}
		\caption{\small Phase diagram for the \ER~model. Note that the line $b=\frac{1}{2}$ corresponds to the Wigner matrix case. The rectangular domain in the upper right corner is for Theorem \ref{thm: main1}. The left quadrangle is the domain for Theorem \ref{thm: main2}. Note that the boundary between the ``almost collinear'' domain and the unknown one is the line $\tau = 2b+2/3$ according to the condition \eqref{eq: sparsity regime condition}.}
	\end{center}
\end{figure}

\begin{figure}[h!]\label{fig: simulation for ER case}
	\begin{center}
		\subfloat[$N=5000$, $k=\lfloor N^{11/6} \rfloor$, $q=N^{17/36}$]{{\includegraphics[width=7cm]{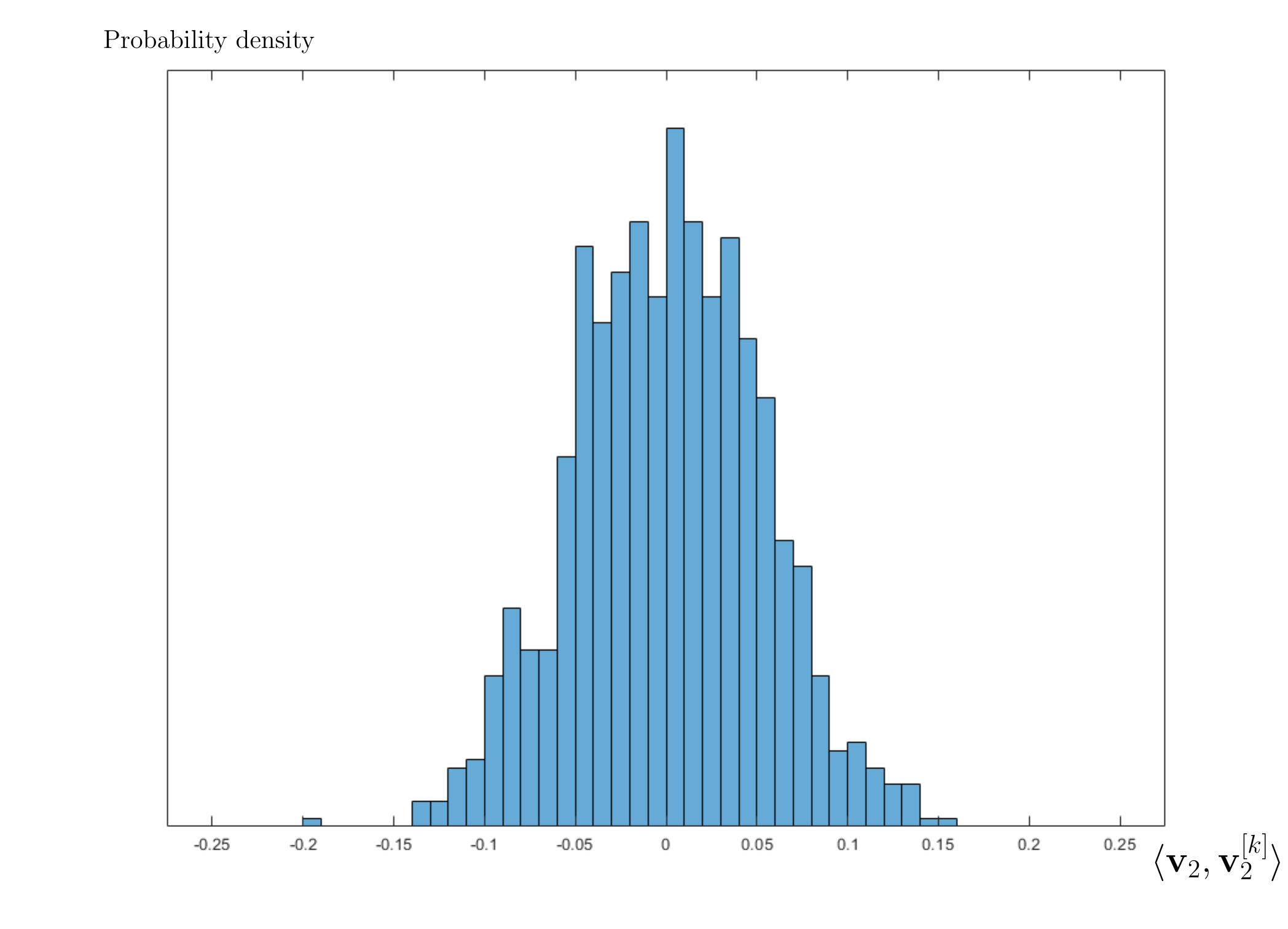}}}
		\qquad
		\subfloat[$N=5000$, $k=\lfloor N^{4/3} \rfloor$, $q=N^{17/36}$]{{\includegraphics[width=7cm]{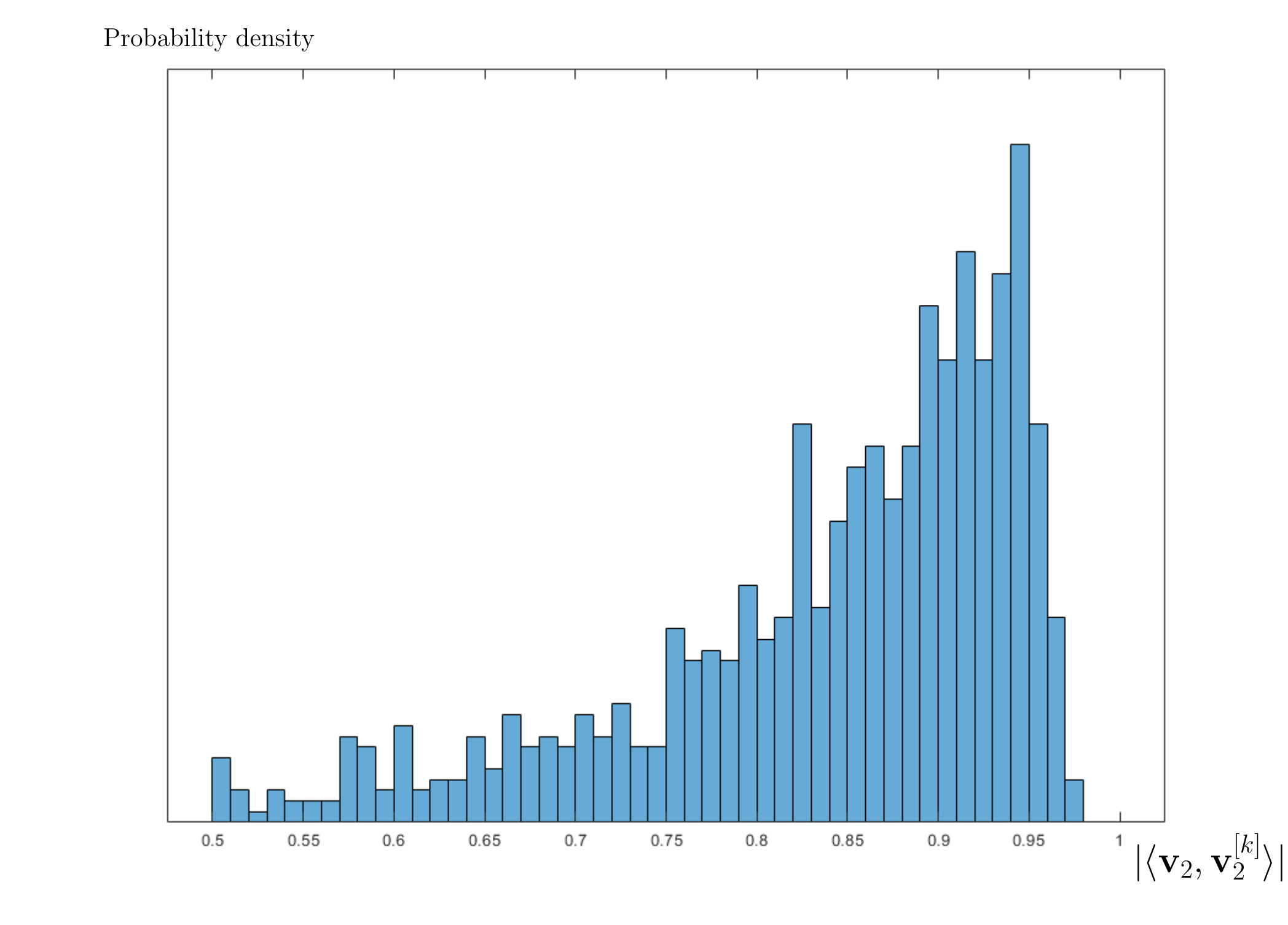}}}
		\caption{\small Simulation for the adjacency matrices of \ER-graphs. On the left (a), the condition $N=5000$, $k=\lfloor N^{11/6} \rfloor$ and $q=N^{17/36}$ satisfies the assumption in Theorem \ref{thm: main1} and it is observed that $\langle \vv_{1},\vv_{1}^{[k]} \rangle$ is close to $0$. On the right (b), we also choose $N$, $k$ and $q$ to satisfy the assumptions in Theorem \ref{thm: main2}  and it is observed that $|\langle \vv_{1},\vv_{1}^{[k]} \rangle|$ is close to $1$.}
	\end{center}
\end{figure}

\begin{remark}[Sparse random matrices]\label{def: sparse RM}
	Theorem \ref{thm: main1} and Theorem \ref{thm: main2} may be extended to the more general class of sparse random matrices. Here we present the model studied in \cite{EKYY12,EKYY13}. Let $\widetilde{H}=(\tilde{h}_{ij})$ be $N\times N$ symmetric random matrices whose entries are real and independent up to the symmetry. We assume that the elements of $\widetilde{H}$ satisfy the moment condition \eqref{eq: moment condition} for all $1\le i,j\le N$ and $3\le m\le L$. We define
	\begin{align}
	\widetilde{A}:=\widetilde{H}+f|\ee\rangle\langle\ee|,
	\end{align}
	where $f\equiv f(N)$ is a deterministic number satisfying
	\begin{align}
	L^{-1}q\le f \le L q,
	\end{align}
	and
	\begin{align}
	\ee \equiv \ee_{N} := \frac{1}{\sqrt{N}}(1,\cdots,1)^{T}.
	\end{align}
	We call this $\widetilde{A}$ a sparse random matrix (with nonzero mean).
\end{remark}

\subsection{Outline}
In the next section, we shall cover some necessary tools used in the proof of the main results. In Section \ref{sec: proof strategy}, we describe the top-level proofs of Theorem \ref{thm: main1} and Theorem \ref{thm: main2}. In Section \ref{sec: monotonicity}, we prove the monotonicity lemma which is an ingredient to the proof of Theorem \ref{thm: main1}. The remaining sections, Section \ref{sec: thm1} and Section \ref{sec: thm2}, are devoted to the details, for Theorem \ref{thm: main1} and Theorem \ref{thm: main2}, respectively.

\subsection{Notation and convention}
We use $C>0$, $D>0$ and $c>0$ as universal constants whose values may change between occurrences. In this paper, $C$ and $D$ are used to denote some constants large enough whereas $c>0$ means a sufficiently small constant. Sometimes we use subscript indices such as $C_{1}$, $\cdots$, $D_{1}$, $\cdots$, $c_{1}$, $\cdots$, whenever we need to denote some fixed large or small constant specifically. Asymptotic notation is used under the assumption $N\to\infty$. For function $f$ and $g$ of parameter $N$, we use the following notation as $N\to\infty$: $f\gtrsim g$ if there exists $C>0$ such that $C\cdot f\ge g$; $f=O(g)$ if $|f|/|g|$ is bounded from above; $f=o(g)$ if $|f|/|g|\to 0$; $f=\Theta(g)$ if $f=O(g)$ and $g=O(f)$. Hilbert-Schmidt norm is denoted by $\lVert \cdot \rVert_{HS}$.

\section{Preliminaries}
In this section, we collect some necessary tools for the proof of main results. For any positive integer $i$, denote $[i]=\{1,\cdots,i\}$. Let $\mathcal{X}_{1},\cdots,\mathcal{X}_{n}$ be i.i.d. random variables taking values in $\R$ and let $f:\R^{n}\mapsto\R$ be a measurable function. Consider the random vector $\mathcal{X}=(\mathcal{X}_{1},\cdots,\mathcal{X}_{n})$ (we shall replace $n$ with $N(N+1)/2$ later). Let $\mathcal{X}'=(\mathcal{X}_{1}',\cdots,\mathcal{X}_{n}')$ be an independent copy of $\mathcal{X}$. We shall use the following notation,
\begin{align}
\mathcal{X}^{(i)}=(\mathcal{X}_{1},\cdots,\mathcal{X}_{i-1},\mathcal{X}_{i}',\mathcal{X}_{i+1},\cdots,\mathcal{X}_{n})\quad\text{and}\quad
\mathcal{X}^{[i]}=(\mathcal{X}_{1}',\cdots,\mathcal{X}_{i}',\mathcal{X}_{i+1},\cdots,\mathcal{X}_{n}),
\end{align}
in particular, $\mathcal{X}^{[0]}=\mathcal{X}$ and $\mathcal{X}^{[n]}=\mathcal{X}'$. More generally, for $\mathcal{I}\subset\N_{+}$, we define $\mathcal{X}^{\mathcal{I}}= (\mathcal{X}^{\mathcal{I}}_{1}, \cdots, \mathcal{X}^{\mathcal{I}}_{n})$ by setting
\begin{align}
 \mathcal{X}^{\mathcal{I}}_{i} = \begin{cases}
  \mathcal{X}_{i} & \text{if }i\notin\mathcal{I}, \\
  \mathcal{X}_{i}'& \text{if }i\in\mathcal{I}.
 \end{cases}
\end{align}
Let $\sigma=(\sigma(1),\cdots,\sigma(n))$ be a random permutation sampled uniformly from the symmetric group $\mathcal{S}_{n}$ and let $\sigma([i])$ denote $\{\sigma(1),\cdots,\sigma(i)\}$. We assume $\sigma$ is independent of $\mathcal{X}$ and $\mathcal{X}'$. Let $j$ be an random variable uniformly distributed on $[n]$ and independent of $\mathcal{X},\mathcal{X}'$ and $\sigma$. Let $\mathcal{X}''$ be an independent copy of $\mathcal{X}$ and be independent of other random variables. Let $\mathcal{X}^{(j)\circ\sigma([i-1])}$ be the vector obtained from $\mathcal{X}^{\sigma([i-1])}$ by replacing $j$-th component of $\mathcal{X}^{\sigma([i-1])}$ with $\mathcal{X}_{j}''$, in particular,
\begin{align}
\mathcal{X}^{(j)} = (\mathcal{X}_{1},\cdots,\mathcal{X}_{j-1},\mathcal{X}_{j}'',\mathcal{X}_{j+1},\cdots,\mathcal{X}_{n}).
\end{align}
For example, suppose $n=5$, and a realization of random elements $\sigma$ and $j$ is given by 
\begin{align}
\sigma=(2,3,1,5,4) \quad \text{and} \quad j=3.
\end{align}
If $i=3$, we have $\sigma([i-1])=\{2,3\}$ and 
\begin{align}
\mathcal{X}^{\sigma[i-1]}=(\mathcal{X}_{1}, \mathcal{X}_{2}', \mathcal{X}_{3}', \mathcal{X}_{4}, \mathcal{X}_{5}) \quad \text{and} \quad
\mathcal{X}^{(j)\circ\sigma[i-1]}=(\mathcal{X}_{1}, \mathcal{X}_{2}', \mathcal{X}_{3}'', \mathcal{X}_{4}, \mathcal{X}_{5}).
\end{align}
\begin{lem}[Variance and noise sensitivity {\cite[Lemma 3]{BLZ19}}]\label{lem: superconcentration}
	For any $i\in[n]$, define $I_{i}$ by
	\begin{align}\label{eq: def of I_i}
	I_{i}:=\E\left[ \left(f(\mathcal{X})-f(\mathcal{X}^{(j)})\right) \left(f(\mathcal{X}^{\sigma([i-1])})-f(\mathcal{X}^{(j)\circ\sigma([i-1])})\right) \right].
	\end{align}
	Then, we have for $i\in[n]$
	\begin{align}\label{eq: superconcentration}
	I_{k}\le\left(\frac{n+1}{n}\right)\left(\frac{2\var \big(f(\mathcal{X})\big)}{k}\right).
	\end{align}
\end{lem}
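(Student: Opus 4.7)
The plan is to combine a monotonicity property of the sequence $(I_i)_{i=1}^{n+1}$ with a total-sum bound, and then extract the $1/k$ factor by averaging. Writing
\begin{align*}
g_i := f(\mathcal{X}^{\sigma([i-1])})-f(\mathcal{X}^{(j)\circ\sigma([i-1])}),
\end{align*}
so that $I_i=\E[g_1 g_i]$, the goal splits into two subclaims: a monotonicity $I_1\ge I_2\ge\cdots\ge I_{n+1}$, and a sum bound $\sum_{i=1}^{n+1}I_i\le \tfrac{2(n+1)}{n}\var(f(\mathcal{X}))$. Combining them would yield
\begin{align*}
kI_k \le \sum_{i=1}^{k} I_i \le \sum_{i=1}^{n+1} I_i \le \frac{2(n+1)}{n}\var(f(\mathcal{X})),
\end{align*}
which is exactly \eqref{eq: superconcentration}.

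For the monotonicity step, the natural viewpoint is that the chain $\mathcal{X}\to\mathcal{X}^{\sigma([1])}\to\cdots\to\mathcal{X}^{\sigma([n])}=\mathcal{X}'$ is a reversible Markov chain that at each step resamples one uniformly chosen remaining coordinate. After conditioning on the perturbation coordinate $j$ and its resample value $\mathcal{X}_j''$, I would seek to express $I_i$ as $\langle g_1,\,T_{i-1}g_1\rangle$ for a positive-semidefinite symmetric operator $T_{i-1}$ built from the one-step conditional expectation along this chain, so that $i\mapsto I_i$ is nonincreasing by a spectral/Cauchy–Schwarz argument. Equivalently, a direct interchange coupling on pairs $(\sigma(i),\sigma(i+1))$ would show that replacing one more coordinate averages $g_{i+1}$ toward zero relative to $g_i$ in the right conditional sense.

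For the sum bound, set $Z_i:=f(\mathcal{X}^{\sigma([i-1])})$, so that $Z_1=f(\mathcal{X})$, $Z_{n+1}=f(\mathcal{X}')$, and $\E[(Z_1-Z_{n+1})^2]=2\var(f(\mathcal{X}))$. Expanding
\begin{align*}
\sum_{i=1}^{n+1}I_i=\E\!\left[g_1\sum_{i=1}^{n+1} g_i\right]
\end{align*}
and conditioning on $(\sigma,\mathcal{X},\mathcal{X}')$, I would try to rewrite the average of the inner sum over $j\in[n]$ as a telescoping quantity related to $Z_1-Z_{n+1}$, up to exchangeable error terms, and then invoke Cauchy–Schwarz and the variance identity for $Z_1-Z_{n+1}$. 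The factor $(n+1)/n$ should arise from the discrepancy between choosing $j$ uniformly in $[n]$ and averaging over the $n+1$ interpolation slots $\sigma([0]),\sigma([1]),\ldots,\sigma([n])$.

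The main obstacle is the monotonicity step. The Markov-chain picture is conceptually clean, but turning $I_i$ into an honest quadratic form against a positive-semidefinite operator requires careful bookkeeping of the joint law of $(\sigma,j,\mathcal{X},\mathcal{X}',\mathcal{X}'')$: naive conditioning tends to leave residual dependence that destroys the symmetry needed for the spectral argument. I would expect the cleanest route to be a direct combinatorial coupling — built on the exchangeability of the uniform permutation $\sigma$ — relating $g_{i+1}$ to a conditional average of $g_i$ obtained by swapping $\sigma(i)$ with a uniformly chosen later index, from which $I_{i+1}\le I_i$ follows by one application of Jensen's inequality on the inner product $\E[g_1\cdot \E[g_{i+1}\mid \cdot]]$.
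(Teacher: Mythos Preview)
Your two-step plan (monotonicity of $I_i$ plus a sum bound $\sum_i I_i\le \tfrac{2(n+1)}{n}\var$) is a reasonable alternative architecture, but it is \emph{not} how the paper proceeds, and your sum-bound step has a genuine gap.

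\textbf{What the paper does instead.} The paper never proves or uses monotonicity of $I_i$ for this lemma. It introduces the simpler quantities
\[
J_i:=\E\big[(f(\mathcal{X})-f(\mathcal{X}^{\sigma(i)}))(f(\mathcal{X}^{\sigma([i-1])})-f(\mathcal{X}^{\sigma([i])}))\big],
\]
for which Chatterjee's identity gives $\sum_{i=1}^n J_i=2\var(f(\mathcal{X}))$ and \cite[Lemma~2]{BLZ19} gives $J_1\ge\cdots\ge J_n\ge 0$, hence $J_k\le 2\var/k$. Then $I_i$ is split according to whether $j\in\sigma([i-1])$: the piece $j\notin\sigma([i-1])$ equals $\tfrac{n-i+1}{n}J_i$ exactly, while the piece $j\in\sigma([i-1])$ is shown (via a nonnegativity claim for a cross term) to be at most $\tfrac{i-1}{n}J_{i-1}$. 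Plugging in $J_i\le 2\var/i$ and $J_{i-1}\le 2\var/(i-1)$ gives $I_i\le \tfrac{2\var}{n}\big(1+\tfrac{n-i+1}{i}\big)=\tfrac{n+1}{n}\cdot\tfrac{2\var}{i}$ directly. No monotonicity of $I_i$ and no global sum bound are needed.

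\textbf{On your monotonicity step.} What you call the ``main obstacle'' is exactly Lemma~\ref{lem: monotonicity} (proved in Section~\ref{sec: monotonicity}), and the proof there is precisely a conditional-Jensen argument of the type you sketch: one again splits on $j\in\sigma([i-1])$, and on each piece reduces to squaring a conditional expectation. So that half of your plan is viable, though your Markov-chain/spectral formulation would need work to be made rigorous.

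\textbf{The gap: your sum bound.} The $g_i$ do \emph{not} telescope: each $g_i=f(\mathcal{X}^{\sigma([i-1])})-f(\mathcal{X}^{(j)\circ\sigma([i-1])})$ is a ``resample coordinate $j$'' difference, not a consecutive increment along the chain $Z_1,\dots,Z_{n+1}$. Averaging $\sum_i g_i$ over $j$ does not collapse to $Z_1-Z_{n+1}$, and Cauchy--Schwarz on $\E[g_1\sum_i g_i]$ produces $\sqrt{\E g_1^2}\cdot\sqrt{\E(\sum_i g_i)^2}$, which you have no handle on. The only route I see to a clean sum bound is to split each $I_i$ on $j\in\sigma([i-1])$ and rewrite the pieces in terms of the $J_i$---at which point you have recovered the paper's decomposition, and the pointwise bound $I_i\le \tfrac{n-i+1}{n}J_i+\tfrac{i-1}{n}J_{i-1}$ is both shorter and stronger than summing. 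A smaller omission: your chain $kI_k\le\sum_{i\le k}I_i\le\sum_{i\le n+1}I_i$ requires $I_i\ge 0$ for $i>k$, which you do not justify (it does follow from monotonicity together with $I_{n+1}=\tfrac1n\sum_j\var(\E[f\mid\mathcal{X}_j])\ge 0$, but this should be stated).
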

\begin{lem}[Monotonicity lemma]\label{lem: monotonicity}
	For any $i\in[n]$, define $I_{i}$ as in \eqref{eq: def of I_i}. Then, we have for $i\in[n-1]$
	\begin{align}\label{eq: monotonicity}
	I_{i}\ge I_{i+1}.
	\end{align}
\end{lem}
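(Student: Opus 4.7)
The plan is to reinterpret $I_i$ as the expectation of an inner product $\langle\phi, R_S\phi\rangle$, where $R_S$ is the orthogonal projection on $L^2$ that averages out the coordinates indexed by $S$, and then to exploit that the family $\{R_S\}_{S\subseteq[n]}$ is monotone in $S$ in the nested-projection sense.

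Concretely, I would first condition on $j$ and $\mathcal{X}''_j$ and introduce the (now deterministic) function $\phi = \phi_{j,\mathcal{X}''_j}:\R^n\to\R$ given by
\begin{align*}
\phi(x) := f(x) - f\bigl(x^{(j)}\bigr),
\end{align*}
where $x^{(j)}$ is $x$ with its $j$-th coordinate replaced by $\mathcal{X}''_j$; with this notation the integrand inside \eqref{eq: def of I_i} is just $\phi(\mathcal{X})\,\phi(\mathcal{X}^{\sigma([i-1])})$. For any $S\subseteq[n]$ let $R_S$ be the operator on $L^2$ defined by $(R_S\psi)(x):=\E[\psi(x^S)]$, which is the orthogonal projection onto the subspace $V_S$ of functions not depending on $\{x_k:k\in S\}$. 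Since $\mathcal{X}'$ is independent of $\mathcal{X}$ and of $(j,\mathcal{X}''_j,\sigma)$, Fubini gives
\begin{align*}
\E_{\mathcal{X},\mathcal{X}'}\bigl[\phi(\mathcal{X})\,\phi(\mathcal{X}^S)\bigr]=\langle\phi,R_S\phi\rangle_{L^2},
\end{align*}
and averaging over $\sigma$ (which makes $\sigma([i-1])$ uniform on $\binom{[n]}{i-1}$) yields
\begin{align*}
\E\bigl[\phi(\mathcal{X})\,\phi(\mathcal{X}^{\sigma([i-1])})\bigr]=\E_{S\sim\binom{[n]}{i-1}}\langle\phi,R_S\phi\rangle.
\end{align*}

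The monotonicity then becomes a nested-projection inequality. Whenever $S\subseteq S'$ one has $V_{S'}\subseteq V_S$, hence $R_{S'}R_S=R_{S'}$, and
\begin{align*}
\langle\phi,R_{S'}\phi\rangle=\|R_{S'}\phi\|^2=\|R_{S'}R_S\phi\|^2\le\|R_S\phi\|^2=\langle\phi,R_S\phi\rangle.
\end{align*}
I would then couple the two random-subset distributions in the standard way: sample $S'$ uniformly from $\binom{[n]}{i}$ and obtain $S$ by deleting one element of $S'$ uniformly at random; the marginal of $S$ is uniform on $\binom{[n]}{i-1}$ while $S\subseteq S'$ holds pointwise. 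Taking expectations along this coupling gives $\E_S\langle\phi,R_S\phi\rangle\ge\E_{S'}\langle\phi,R_{S'}\phi\rangle$, and averaging finally over $j$ and $\mathcal{X}''_j$ yields $I_i\ge I_{i+1}$.

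The only substantive step is the operator-theoretic rewriting of $I_i$; once that is in place, the rest is an elementary Hilbert-space observation together with a standard random-subset coupling. The main thing to be careful about is the bookkeeping of the auxiliary randomness: one must verify that the same $\mathcal{X}''_j$ appears inside both $\mathcal{X}^{(j)}$ and $\mathcal{X}^{(j)\circ\sigma([i-1])}$, so that $\phi$ is truly one fixed function for each $(j,\mathcal{X}''_j)$, and that the $\mathcal{X}'$ used in $R_S\phi$ is exactly the i.i.d. copy entering $\mathcal{X}^S$, so that the identity $\E_{\mathcal{X}'}[\phi(\mathcal{X}^S)\mid\mathcal{X}]=(R_S\phi)(\mathcal{X})$ genuinely holds.
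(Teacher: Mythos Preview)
Your argument is correct and is genuinely different from the paper's. The paper proceeds by splitting $I_i$ according to whether $j\in\sigma([i-1])$ or not: on the event $j\notin\sigma([i-1])$ it rewrites the contribution as a constant times $\E\big[(f(\mathcal{X})-f(\mathcal{X}^{\sigma(i)}))(f(\mathcal{X}^{\sigma([i-1])})-f(\mathcal{X}^{\sigma([i])}))\big]$ and invokes the monotonicity from \cite[Lemma~2]{BLZ19}; on the event $j\in\sigma([i-1])$ it fixes $\sigma$ and $j$ and performs a hands-on Jensen computation with the conditional law given $(\mathcal{X}_1'',\mathcal{X}_i,K)$ versus $(\mathcal{X}_1'',K)$. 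Your approach bypasses this case distinction entirely: conditioning on $(j,\mathcal{X}''_j)$ turns the integrand into $\phi(\mathcal{X})\phi(\mathcal{X}^S)$ for a single function $\phi$, recognizes $\E_{\mathcal{X},\mathcal{X}'}[\phi(\mathcal{X})\phi(\mathcal{X}^S)]=\lVert R_S\phi\rVert^2$, and reduces the lemma to the elementary fact that nested conditional expectations contract in $L^2$, together with the standard $(i-1)$-subset/$i$-subset coupling. This is cleaner and self-contained (no appeal to \cite{BLZ19}), and it makes transparent that the monotonicity is really an Efron--Stein/ANOVA phenomenon; the paper's route, on the other hand, is more explicit about the two mechanisms at play, which is what it later exploits when combining the monotonicity with the variance bound in Lemma~\ref{lem: superconcentration}. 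The only implicit hypothesis you are using is $\phi\in L^2$ (equivalently $f\in L^2$), which is also tacitly assumed in the paper and certainly holds in the application to $f=\lambda_2$.
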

Lemma \ref{lem: superconcentration} is shown in \cite{BLZ19}. For the reader's convenience, we provide its proof in the appendix, with adding some additional details. In Section \ref{sec: monotonicity}, we establish Lemma \eqref{lem: monotonicity}, the monotonicity of $\{I_{i}\}$. This monotonicity lemma is required to prove Lemma \ref{lem: expectation estimate} later. Next, we summarize some random matrix results.
\begin{defi}[Overwhelming probability]
	Let $\{E_{N}\}_{N\in \N_{+}}$ be a sequence of events. We say $E_{N}$ holds with \textbf{overwhelming probability} if for any $C>0$, there exists a constant $D>0$ such that for all $N\in\N_{+}$
	\begin{align}
	\prob\big((E_{N})^{c}\big)\le DN^{-C}.
	\end{align}
\end{defi}
Let $\gamma_{i}$ be the classical location of $i$-th eigenvalue which is defined by
\begin{align}\label{eq: def of classical location}
\mu_{\text{sc}}(\gamma_{i},\infty)=\frac{i}{N}
\end{align}
where 
\begin{align}\label{eq: semicircle dist}
\mu_{\text{sc}}:=\frac{1}{2\pi}\sqrt{(4-x^{2})_{+}}
\end{align}
is the semicircle distribution.
\begin{lem}[Eigenvalue location {\cite[Theorem 2.13]{EKYY13}}]\label{lem: eig val location}
	 If $q = N^{b}$ with $b\in(\frac{1}{3},\frac{1}{2})$, then there exists $C>0$ such that we have with overwhelming probability for $i\in\{2,\cdots, N\}$
	\begin{align}
	|\lambda_{i}-\gamma_{i}|\le L^{C}\left(N^{-2/3}\big(\text{min}(i,N-i)\big)^{-1/3}+q^{-2}\right).
	\end{align}
\end{lem}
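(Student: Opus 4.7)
The plan is to derive this rigidity estimate as a direct consequence of a strong local semicircle law for the sparse adjacency matrix, following the Erd\H{o}s--Knowles--Yau--Yin philosophy. Introduce the resolvent $G(z) = (H-z)^{-1}$ and its normalized trace $m_{N}(z) := N^{-1}\operatorname{tr} G(z)$, and let $m_{sc}(z) := \int (x-z)^{-1}\,d\mu_{sc}(x)$, which satisfies the algebraic identity $m_{sc}(z)^{-1} + z + m_{sc}(z) = 0$ on the upper half plane. The first step is to prove that, with overwhelming probability and uniformly on a spectral domain of the form $\{z = E+i\eta: |E|\le 3,\ L^{C}N^{-1}\le \eta \le 1\}$,
\begin{align*}
|m_{N}(z) - m_{sc}(z)| \lesssim L^{C}\left(\frac{1}{q^{2}} + \frac{1}{N\eta}\right).
\end{align*}
This is obtained by applying Schur's complement formula to each $G_{ii}$, which produces a perturbed self-consistent equation $G_{ii}^{-1} = -z - m_{N}(z) + \Upsilon_{i}(z)$, and then estimating the error terms $\Upsilon_{i}$.

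The sparse moment condition $\E|h_{ij}|^{m}\le (Nq^{m-2})^{-1}$ allows individual off-diagonal sums in $\Upsilon_{i}$ to fluctuate at scale $q^{-1}$, but the crucial step is a fluctuation-averaging lemma showing that the average of these quantities gains an additional $q^{-1}$ factor of cancellation, producing the $q^{-2}$ term. This is typically executed by a high-moment expansion indexed by graphs, where the sparsity weights are tracked and pairing combinatorics yield the extra $q^{-1}$. A standard continuity (bootstrap) argument in $\eta$ then propagates the control from large $\eta$ down to the optimal scale.

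Once the local semicircle law is established, the conversion to eigenvalue rigidity is routine. Applying the Helffer--Sj\"{o}strand functional calculus to a mollified indicator of the interval $(-\infty,E]$ bounds the empirical counting function $\mathfrak{n}(E) := N^{-1}\#\{j:\lambda_{j}\le E\}$ in terms of integrals of $|m_{N}-m_{sc}|$ plus boundary corrections. Inverting this bound at $E = \gamma_{i}$ and using $\mathfrak{n}(\gamma_{i}) = i/N$ yields
\begin{align*}
|\lambda_{i}-\gamma_{i}| \lesssim \frac{L^{C}}{\rho_{sc}(\gamma_{i})}\left(\frac{1}{q^{2}} + \frac{1}{N}\right),
\end{align*}
where $\rho_{sc}$ is the semicircle density. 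Near the spectral edges, $\rho_{sc}(\gamma_{i})\asymp N^{-1/3}(\min(i,N-i))^{1/3}$, which exactly converts the $1/N$ piece into $N^{-2/3}(\min(i,N-i))^{-1/3}$, while the $q^{-2}$ term survives untouched; combining the two recovers the claimed bound.

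The hard part is the fluctuation averaging for the sparse model. Unlike the dense Wigner setting, where sub-exponential tails give concentration essentially for free, here one must organize a graph-theoretic moment expansion that tracks every factor of $q^{-1}$ arising from the sparse Bernoulli structure. The sharpest control is needed at the edge, where $\eta$ is pushed to the scale $N^{-2/3}$ and the $q^{-2}$ floor becomes comparable to the $1/(N\eta)$ term; the threshold $b>1/3$ in the hypothesis is precisely what keeps the sparsity contribution subdominant enough for the edge analysis to close.
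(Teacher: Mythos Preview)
The paper does not supply its own proof of this lemma; it is quoted directly from \cite[Theorem~2.13]{EKYY13} and used as a black box throughout. Your sketch is a faithful outline of exactly that argument: a local semicircle law for the resolvent via the self-consistent equation, fluctuation averaging to upgrade the sparsity error from $q^{-1}$ to $q^{-2}$, and the standard Helffer--Sj\"{o}strand conversion from Stieltjes-transform control to counting-function and rigidity bounds. So in substance your proposal coincides with the cited source.

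One point worth flagging: you set $G(z)=(H-z)^{-1}$ with $H$ the centred matrix, but the eigenvalues $\lambda_{i}$ in the stated lemma are those of the uncentred matrix $A = H + f|\ee\rangle\langle\ee|$, and the restriction $i\in\{2,\dots,N\}$ is there precisely to exclude the outlier $\lambda_{1}$. Your argument as written yields rigidity for the spectrum of $H$; to transfer it to the non-top eigenvalues of $A$ one needs the additional observation that $A$ and $H$ differ by a rank-one perturbation, so their eigenvalues interlace and rigidity carries over with a harmless index shift. This step is routine and is how \cite{EKYY13} handles it, but it belongs in the write-up.
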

\begin{remark}
	Using Lemma \ref{lem: eig val location}, we obtain for some $C>0$
	\begin{align}\label{eq: variance estimate}
	\var(\lambda_{2})\le O\left(L^{C}N^{-4/3}\right).
	\end{align}
\end{remark}

\begin{lem}[Location of the largest eigenvalue {\cite[Theorem 6.2]{EKYY13}}]\label{lem: top eig val location}
	We have with overwhelming probability
	\begin{align}
	\lambda_{1} = \zeta q + \frac{1}{\zeta q} + o(1).
	\end{align}
\end{lem}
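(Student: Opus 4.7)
My plan is to identify $A$ as a rank-one deformation of a mean-zero sparse Wigner-like matrix and to analyze the top eigenvalue via the associated secular equation, following the BBP-type picture for deformed Wigner ensembles.

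First I would write $A = H + f\,\ee\ee^{T}$ with $\ee = N^{-1/2}(1,\ldots,1)^{T}$ and $f = q\zeta$. Each entry of $A$ has mean $(q^{2}/N)(\zeta/q) = q\zeta/N$, so $\E A = (q\zeta/N)J = q\zeta\,\ee\ee^{T}$, and the centered matrix $H := A - \E A$ satisfies the moment bounds \eqref{eq: moment condition} and has mean-zero entries. Under the hypothesis $q \ll \sqrt{N}$, we have $\zeta = (1-q^{2}/N)^{-1/2} = 1 + o(1)$, so $f \gg 1$ and we sit in the supercritical regime of the rank-one perturbation.

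Next I would exploit the rank-one structure. A real number $\lambda \notin \mathrm{spec}(H)$ is an eigenvalue of $A$ if and only if it satisfies the secular equation
\begin{align*}
1 = f\,\langle \ee,\,(\lambda I - H)^{-1}\ee\rangle,
\end{align*}
with the corresponding eigenvector proportional to $(\lambda I - H)^{-1}\ee$. The isotropic local semicircle law for sparse random matrices, established in \cite{EKYY13}, supplies
\begin{align*}
\langle \ee,\,(\lambda I - H)^{-1}\ee\rangle = g_{\mathrm{sc}}(\lambda) + o(1)
\end{align*}
with overwhelming probability, uniformly for $\lambda$ in a compact neighborhood of $[2+\varepsilon, \infty)$, where $g_{\mathrm{sc}}(\lambda) := \int(\lambda - x)^{-1}\mu_{\mathrm{sc}}(dx) = \tfrac12\bigl(\lambda - \sqrt{\lambda^{2}-4}\bigr)$. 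The same analysis provides the edge estimate $\lambda_{1}(H) \le 2 + o(1)$ with overwhelming probability, since $H$ is a mean-zero sparse matrix obeying \eqref{eq: moment condition}.

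I would then solve the deterministic limit equation $g_{\mathrm{sc}}(\lambda) = 1/f$ exactly: rearranging to $\sqrt{\lambda^{2}-4} = \lambda - 2/f$ and squaring yields the unique root $\lambda^{*} = f + 1/f$ on $(2,\infty)$. Strict monotonicity of $g_{\mathrm{sc}}$ on $(2,\infty)$ together with the $o(1)$ error in the isotropic law produces, by a standard implicit-function / stability argument, an eigenvalue of $A$ at $\lambda^{*} + o(1) = q\zeta + (q\zeta)^{-1} + o(1)$. Since $\lambda^{*} \gg 2 + o(1) \ge \lambda_{1}(H)$, Cauchy interlacing for rank-one positive perturbations forces this outlier to be precisely $\lambda_{1}(A)$, while $\lambda_{i}(A)$ for $i \ge 2$ remain trapped inside the semicircle bulk (as confirmed independently by Lemma \ref{lem: eig val location}). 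The main obstacle is the isotropic local law for the resolvent entry $\langle \ee,(\lambda I - H)^{-1}\ee\rangle$ with control sharp enough near the spectral edge: a bound far from the spectrum is cheap, but the uniform $o(1)$ error down to $\lambda = 2 + o(1)$ requires the full self-consistent equation plus fluctuation-averaging analysis of \cite{EKYY13}, and this is precisely what imposes the lower bound $q \gg L$ on the sparsity parameter.
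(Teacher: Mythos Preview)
The paper does not prove this lemma; it is quoted directly from \cite[Theorem~6.2]{EKYY13}. Your outline---decompose $A=H+f\,\ee\ee^{T}$ with $f=\zeta q$, locate the outlier via the secular equation $f\langle\ee,(\lambda I-H)^{-1}\ee\rangle=1$, and control the quadratic form by resolvent estimates for $H$---is exactly the strategy carried out in that reference, so there is nothing substantive to compare.

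One correction on where the difficulty sits. Under the standing hypothesis $L\ll q\ll\sqrt{N}$ we have $f=\zeta q\to\infty$, so the outlier $\lambda^{*}=f+1/f$ lies \emph{far above} the spectrum of $H$, not near its edge. Once the operator-norm bound $\|H\|\le 2+o(1)$ is available (this is where the local law is genuinely used), the Neumann series $(\lambda I-H)^{-1}=\lambda^{-1}\sum_{\ell\ge 0}(H/\lambda)^{\ell}$ converges geometrically for $\lambda$ near $f$, and controlling $\langle\ee,(\lambda I-H)^{-1}\ee\rangle$ to precision $o(1/f)$ is then an elementary moment computation. The ``uniform $o(1)$ error down to $\lambda=2+o(1)$'' that you identify as the main obstacle is therefore not needed for this particular lemma; the delicate near-edge analysis enters in Lemma~\ref{lem: eig val location}, not here. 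Relatedly, your phrase ``compact neighborhood of $[2+\varepsilon,\infty)$'' is off, since $\lambda^{*}\to\infty$ escapes any fixed compact set.
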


\begin{lem}[Delocalization of eigenvectors {\cite[Theorem 2.16]{EKYY13}}]\label{lem: delocalization}
	Assume $q=N^{b}$ and $b\in(0,\frac{1}{2})$. There exists a constant $C>0$ such that
	\begin{align}
	\max_{1\le i \le N} \lVert \vv_{i} \rVert_{\infty} \le \frac{(\log{N})^{C}}{\sqrt{N}}
	\end{align}
	holds with overwhelming probability. In particular, for the top eigenvector $\vv_{1}$, we have with overwhelming probability
	\begin{align}\label{eq: top eigen vec fail reason}
	\lVert \vv_{1} - \ee \rVert_{2}\le O(q^{-1})
	\end{align}
	where
	\begin{align}
	\ee = \frac{1}{\sqrt{N}}(1,\cdots,1)^{T}.
	\end{align}
\end{lem}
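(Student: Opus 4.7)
The plan is to handle the two claims separately: the $\ell^{\infty}$ delocalization bound follows from an entrywise local semicircle law at the optimal scale, while the $\ell^{2}$ closeness of $\vv_{1}$ to $\ee$ is a perturbation argument exploiting the rank-one mean structure $A = H + \zeta q\,\ee\ee^{T}$ and the large spectral gap $\lambda_{1} - \lambda_{2} = \Theta(q)$.

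For the delocalization bound, the starting point is the spectral decomposition $G_{ii}(z) = \sum_{\alpha} (v_{\alpha})_{i}^{2}/(\lambda_{\alpha} - z)$ for the resolvent $G(z) = (A - z)^{-1}$, which gives the pointwise bound
\begin{align*}
(v_{\alpha})_{i}^{2} \le \eta \cdot \mathrm{Im}\, G_{ii}(\lambda_{\alpha} + i\eta)
\end{align*}
for any $\eta > 0$. Choosing $\eta = L^{C}/N$ with $C$ large, it suffices to control $\mathrm{Im}\,G_{ii}$ uniformly at this scale, which is precisely what the entrywise local law for sparse \ER~matrices provides: a Schur-complement expansion combined with a fluctuation averaging argument, adapted to the sparsity parameter $q \gg L$, shows that $|G_{ii}(E + i\eta) - m_{\text{sc}}(E + i\eta)|$ is small uniformly over $E$ in the bulk and up to the upper edge $2$. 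Since $m_{\text{sc}}$ is bounded, plugging this back yields $(v_{\alpha})_{i}^{2} \lesssim L^{C}/N$ for all $\alpha \ge 2$. For the outlier eigenvector $\vv_{1}$ the local law does not apply directly at the scale $\lambda_{1}\approx \zeta q$, but the same bound follows by applying the Sherman--Morrison identity to $G$ in terms of the resolvent of $H$, which transfers the entrywise estimate across the rank-one shift.

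For the $\ell^{2}$ estimate on $\vv_{1}$, I would decompose $\ee = \alpha\vv_{1} + \tilde{\ee}$ with $\tilde{\ee}\perp\vv_{1}$. The identity $A\ee = \zeta q\,\ee + H\ee$ rearranges to $(A - \lambda_{1})\ee = H\ee + (\zeta q - \lambda_{1})\ee$, and projecting onto $\vv_{1}^{\perp}$ gives
\begin{align*}
\tilde{\ee} = (A - \lambda_{1})^{-1}\big|_{\vv_{1}^{\perp}}\bigl(I - \vv_{1}\vv_{1}^{T}\bigr)\bigl(H\ee + (\zeta q - \lambda_{1})\ee\bigr).
\end{align*}
The restricted inverse has operator norm $(\lambda_{1} - \lambda_{2})^{-1} = \Theta(q^{-1})$ since $\lambda_{1} = \zeta q(1+o(1))$ by Lemma \ref{lem: top eig val location} while $\lambda_{2}$ lies near the edge $2$ by Lemma \ref{lem: eig val location}. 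A direct second-moment calculation gives $\E\|H\ee\|_{2}^{2} = 1$, and a concentration bound based on \eqref{eq: moment condition} upgrades this to $\|H\ee\|_{2} = O(1)$ with overwhelming probability; together with $|\zeta q - \lambda_{1}| = o(1)$, this produces $\|\tilde{\ee}\|_{2} = O(q^{-1})$, and fixing the sign of $\vv_{1}$ so that $\alpha > 0$ yields $\|\vv_{1} - \ee\|_{2} = O(q^{-1})$.

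The main obstacle is the local law input at the optimal scale $\eta\sim 1/N$: the sparsity $q \ll \sqrt{N}$ amplifies the fluctuations of individual entries and forces one to track high moments carefully through the fluctuation averaging machinery of \cite{EKYY13}. The perturbative argument for the top eigenvector, by contrast, is short once the eigenvalue separation and the size of $\|H\ee\|_{2}$ are in hand.
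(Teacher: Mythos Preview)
The paper does not prove Lemma~\ref{lem: delocalization}; it is quoted directly from \cite[Theorem~2.16]{EKYY13} as a preliminary input, so there is no in-paper argument to compare against. Your outline is correct and matches the standard route: the $\ell^{\infty}$ bound via $(v_{\alpha})_{i}^{2}\le \eta\,\mathrm{Im}\,G_{ii}(\lambda_{\alpha}+i\eta)$ combined with the entrywise local law at scale $\eta\sim L^{C}/N$ is exactly how delocalization is derived in \cite{EKYY13}, and your perturbative argument for $\|\vv_{1}-\ee\|_{2}$ (exploiting $A=H+\zeta q\,\ee\ee^{T}$, the gap $\lambda_{1}-\lambda_{2}=\Theta(q)$, and $\|H\ee\|_{2}=O(1)$) is the standard Davis--Kahan-type computation used there as well.

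One minor caution: you invoke Lemma~\ref{lem: eig val location} to bound $\lambda_{2}$, but that lemma as stated in this paper requires $b>\tfrac13$, whereas Lemma~\ref{lem: delocalization} is claimed for all $b\in(0,\tfrac12)$. This is not a real gap---Weyl's inequality applied to $A=H+\zeta q\,\ee\ee^{T}$ gives $\lambda_{2}(A)\le \lambda_{1}(H)\le 2+o(1)$ directly, without rigidity---but you should say so rather than cite Lemma~\ref{lem: eig val location}.
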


\begin{lem}[Tail bounds for the gaps between eigenvalues {\cite[Theorem 2.6]{LL19}}]\label{lem: tail bound for gaps}
	Assume $q=N^{b}$ and $b\in(0,\frac{1}{2})$. There exists $c>0$ such that we have for any $\rho>0$
	\begin{align}
	\sup_{1\le i\le N-1} \prob\left( \lambda_{i}-\lambda_{i+1} \le cN^{-1-\rho} \right)=O(N^{-\rho}\log{N}).
	\end{align} 
\end{lem}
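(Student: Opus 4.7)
The plan is to convert a small-gap event into a large-deviation statement about the imaginary part of the Stieltjes transform at a subcritical scale, use the eigenvalue rigidity of Lemma~\ref{lem: eig val location} to localize the candidate energies, and bound the pointwise large deviation by Markov's inequality.

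First, write $m_N(z) = N^{-1}\mathrm{tr}(A-z)^{-1}$ and set $\eta = cN^{-1-\rho}$. For $E=(\lambda_i+\lambda_{i+1})/2$, the spectral decomposition gives
$$
\mathrm{Im}\, m_N(E+i\eta) \;=\; \frac{1}{N}\sum_{j=1}^{N}\frac{\eta}{(E-\lambda_j)^2+\eta^2} \;\ge\; \frac{c'}{N\eta} \;=\; c'N^{\rho}
$$
on the event $\{\lambda_i-\lambda_{i+1}\le\eta\}$, the lower bound coming from the contributions of $\lambda_i,\lambda_{i+1}$ alone. Next, by Lemma~\ref{lem: eig val location} the midpoint $E$ lies, with overwhelming probability, in a window $W_i\subset\mathbb{R}$ of width $\Delta_i = O\bigl(L^{C}(N^{-2/3}\min(i,N-i)^{-1/3}+q^{-2})\bigr)$ centered at $\gamma_i$. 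Covering $W_i$ with a grid $\mathcal{G}$ of mesh $\eta$, and using the Lipschitz bound $|\partial_E\,\mathrm{Im}\, m_N(E+i\eta)|\le \eta^{-1}\,\mathrm{Im}\, m_N(E+i\eta)$ to pass from the continuous maximum to the grid, reduces the problem to a fixed-energy tail bound.

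For each fixed $E_0\in\mathcal{G}$, Markov's inequality together with the uniform bound $\E\,\mathrm{Im}\, m_N(E_0+i\eta)\le C$---which follows from the local semicircle law of \cite{EKYY12,EKYY13} at macroscopic scale combined with monotonicity of $\eta\mapsto\mathrm{Im}\, m_N(E_0+i\eta)$---yields
$$
\prob\bigl(\mathrm{Im}\, m_N(E_0+i\eta)\ge c'N^{\rho}\bigr)\;\le\;\frac{C}{N^{\rho}}.
$$
The main obstacle is then the union bound over the grid, whose cardinality $\Delta_i/\eta$ is polynomial in $N$, too large to absorb into a single $\log N$ factor on its own. To sharpen this, one replaces the crude first-moment bound by a second-moment estimate: at bulk energies the fluctuation theorem of the local law gives near-Gaussian concentration of $\mathrm{Im}\, m_N$ around the semicircle value, while near the spectral edge one invokes the Tracy--Widom-type level repulsion and edge universality of \cite{EKYY13}, which forces the two-point correlation of the eigenvalue process to be sub-quadratic at scale $\eta$. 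Stitching together the bulk fluctuation bound and the edge-universality estimate---and carefully tracking the $L^{C}$ factors against the polynomial grid size---produces the claimed uniform bound $O(N^{-\rho}\log N)$.
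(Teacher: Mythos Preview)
The paper does not prove this lemma; it is quoted verbatim as \cite[Theorem~2.6]{LL19} and used as a black box. So there is no in-paper argument to compare against, and the relevant question is only whether your sketch stands on its own.

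It does not. You correctly identify the obstruction yourself: after localizing $E$ by rigidity you must take a union bound over a grid of size $\Delta_i/\eta$, which is at least of order $N^{-2/3}/(cN^{-1-\rho})\asymp N^{1/3+\rho}$ in the bulk and larger near the edge. A pointwise Markov bound of order $N^{-\rho}$ then yields nothing. Your proposed repair --- ``replace the first-moment bound by a second-moment estimate'' and invoke ``Tracy--Widom-type level repulsion and edge universality'' --- is not a proof but a restatement of the goal: level repulsion \emph{is} the small-gap estimate you are trying to establish, and edge universality for sparse matrices at this sparsity is itself a deep input that would subsume the lemma. No concrete second-moment computation is indicated, and none of the cited local-law machinery operates at the scale $\eta\ll N^{-1}$ you need; the monotonicity of $\eta\mapsto \eta\,\mathrm{Im}\,m_N$ only gives $\mathrm{Im}\,m_N(E+i\eta)\lesssim (N\eta)^{-1}\cdot N\eta_0\,\mathrm{Im}\,m_N(E+i\eta_0)$ for $\eta_0\gg N^{-1}$, which is the trivial bound and does not yield $\E\,\mathrm{Im}\,m_N\le C$ at $\eta=cN^{-1-\rho}$.

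The argument in \cite{LL19} proceeds by an entirely different route: it bounds the probability that $A-\lambda_i I$ (or a closely related shifted matrix) has two nearly-aligned almost-null vectors via minimum singular value / Wegner-type estimates adapted to the sparse setting, avoiding the Stieltjes transform at sub-microscopic scales altogether. That mechanism is what produces the $O(N^{-\rho}\log N)$ bound uniformly in $i$, and nothing in your outline substitutes for it.
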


\section{Top-level proof of the results}\label{sec: proof strategy}
We adapt the method of proof in \cite{BLZ19} by applying recent results for the sparse \ER~graph model, in order to establish Theorem \ref{thm: main1} and Theorem \ref{thm: main2}.
\subsection{Top-level proof of Theorem \ref{thm: main1}}
For any $1\le i,j\le N$, denote by $B_{(ij)}$ the symmetric matrix obtained from $A$ by replacing the entry $a_{ij}$ and $a_{ji}$ with $a_{ij}''$ and $a_{ji}''$ respectively. We obtain $B^{[k]}_{(ij)}$ from $A^{[k]}$ by the same operation. Denote by $(st)$ a random pair of indices chosen uniformly from $\{(ij): 1\le i\le j\le N
\}$. Note that 
\begin{align}
|\{(ij): 1\le i\le j\le N\}|=N(N+1)/2
\end{align}
Let $\mu_{1}\ge\cdots\ge\mu_{N}$ be the ordered eigenvalues of $B_{(st)}$ and, let $\uu_{1},\cdots,\uu_{N}$ be the associated unit eigenvectors of $B_{(st)}$. Similarly, we define $\mu_{1}^{[k]}\ge\cdots\ge\mu_{N}^{[k]}$ and $\uu_{1}^{[k]},\cdots,\uu_{N}^{[k]}$ for $B^{[k]}_{(st)}$. According to Lemma \ref{lem: superconcentration}, we have
\begin{align}\label{eq: application of superconcentration lem}
\E\left[ (\lambda_{2}-\mu_{2})(\lambda_{2}^{[k]}-\mu_{2}^{[k]}) \right]\le\frac{2\text{Var}(\lambda_{2})}{k}\cdot\frac{N(N+1)+2}{N(N+1)}.
\end{align}
Using the next lemma, we can control $\lambda_{2}-\mu_{2}$ and $\lambda_{2}^{[k]}-\mu_{2}^{[k]}$.
\begin{lem}\label{lem: small perturbation variation}
	Let us write $\vv_{2}=(v_{1},\cdots,v_{N})$ and $\uu_{2}=(u_{1},\cdots,u_{N})$. There exists $C>0$ and $D>0$ such that with overwhelming probability 
	\begin{align}
	Z_{st}u_{s}u_{t} - \frac{CL^{D}}{q^{3}N^{2}}
	\le \lambda_{2}-\mu_{2} 
	\le Z_{st}v_{s}v_{t} + \frac{CL^{D}}{q^{3}N^{2}}
	\end{align}
	where 
	\begin{align}
	Z_{st}:=(a_{st}-a_{st}'')(1+\indic(s\neq t)).
	\end{align}
	Similarly, with overwhelming probability,
	\begin{align}
	Z_{st}^{[k]}u_{s}^{[k]}u_{t}^{[k]} - \frac{CL^{D}}{q^{3}N^{2}}
	\le \lambda_{2}^{[k]}-\mu_{2}^{[k]} 
	\le Z_{st}^{[k]}v_{s}^{[k]}v_{t}^{[k]} + \frac{CL^{D}}{q^{3}N^{2}},
	\end{align}
	where $\vv_{2}^{[k]}=(v_{1}^{[k]},\cdots,v_{N}^{[k]})$, $\uu_{2}^{[k]}=(u_{1}^{[k]},\cdots,u_{N}^{[k]})$ and
	\begin{align}
	Z_{st}^{[k]}:=\begin{cases}
	(a_{st}-a_{st}'')(1+\indic(s\neq t)) &\text{if } (st)\notin S_{k},\\
	(a_{st}'-a_{st}'')(1+\indic(s\neq t)) &\text{if } (st)\in S_{k}.
	\end{cases}
	\end{align}
\end{lem}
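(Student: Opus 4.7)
Both inequalities would be proved by an asymmetric variational argument with carefully chosen test vectors, together with an exact identity that controls the inner product $\langle \vv_{2}, \uu_{1}\rangle$. The starting observation is that for any vector $\yy = (y_{i})_{i=1}^{N}$, direct computation gives $\langle \yy, (A - B_{(st)})\yy\rangle = Z_{st}\, y_{s} y_{t}$, handling the cases $s=t$ and $s\neq t$ separately (in the latter, the factor of $2$ comes from symmetry and is absorbed into the definition of $Z_{st}$).

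For the upper bound I would use the variational characterization $\mu_{2} = \max_{\ww\perp \uu_{1},\, \|\ww\|=1} \langle \ww, B_{(st)}\ww\rangle$ with the test vector $\ww = (\vv_{2} - \alpha \uu_{1})/\sqrt{1-\alpha^{2}}$, where $\alpha := \langle \vv_{2}, \uu_{1}\rangle$. Since $B_{(st)}\uu_{1} = \mu_{1}\uu_{1}$ and $\langle \vv_{2}, B_{(st)}\vv_{2}\rangle = \lambda_{2} - Z_{st} v_{s} v_{t}$, a direct expansion yields
\begin{align*}
\lambda_{2} - \mu_{2} \;\le\; \frac{Z_{st} v_{s} v_{t}}{1 - \alpha^{2}} + \frac{\alpha^{2}(\mu_{1} - \lambda_{2})}{1 - \alpha^{2}}.
\end{align*}
The matching lower bound follows symmetrically, using $\lambda_{2} = \max_{\ww\perp \vv_{1}}\langle \ww, A \ww\rangle$ with the test vector $(\uu_{2} - \beta \vv_{1})/\sqrt{1-\beta^{2}}$, where $\beta := \langle \uu_{2}, \vv_{1}\rangle$.

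The heart of the proof is then controlling $\alpha$ (and symmetrically $\beta$). Taking the inner product of the identity $A\uu_{1} = \mu_{1}\uu_{1} + (A - B_{(st)})\uu_{1}$ with $\vv_{2}$ gives the exact formula $(\lambda_{2} - \mu_{1})\alpha = \langle \vv_{2}, (A - B_{(st)})\uu_{1}\rangle$. Because $A - B_{(st)}$ is supported on the pair of entries $(s,t),(t,s)$, the right-hand side is a linear combination of products of the form $Z_{st}\cdot v_{\cdot}^{(2)} u_{\cdot}^{(1)}$, so the delocalization bound (Lemma~\ref{lem: delocalization}) applied to both $\vv_{2}$ and $\uu_{1}$ yields
\begin{align*}
\bigl|\langle \vv_{2}, (A - B_{(st)})\uu_{1}\rangle\bigr| \;\lesssim\; \frac{L^{C}|Z_{st}|}{N} \;\lesssim\; \frac{L^{C}}{qN}
\end{align*}
with overwhelming probability. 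On the other hand, Lemma~\ref{lem: top eig val location} applied to both $A$ and $B_{(st)}$ (still the recentered adjacency of an \ER~graph) together with Lemma~\ref{lem: eig val location} gives $\mu_{1} - \lambda_{2} \gtrsim q$. Hence $|\alpha| \lesssim L^{C}/(q^{2} N)$, so $\alpha^{2}(\mu_{1}-\lambda_{2}) \lesssim L^{2C}/(q^{3} N^{2})$, and the $(1-\alpha^{2})^{-1}$ correction to the main term $Z_{st} v_{s} v_{t}$ is of much smaller order. For the $[k]$-versions, $A^{[k]}$ has the same distribution as the recentered adjacency of an \ER~graph, so every lemma cited above applies verbatim to $A^{[k]}$ and $B^{[k]}_{(st)}$ and the argument transfers with all quantities replaced by their $[k]$-counterparts.

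The main obstacle is securing the sharp error scale $L^{D}/(q^{3} N^{2})$. A naive second-order perturbation bound based on the small spectral gap $\lambda_{2}-\lambda_{3} \sim N^{-2/3}$ produces an error of order $N^{2/3}/q^{2}$, which is far too large. The proof must combine two separate gains: using the \emph{large} gap $\mu_{1} - \lambda_{2} \sim q$ in the denominator of $\alpha$ (rather than the bulk gap separating $\lambda_{2}$ from its neighbours), and exploiting delocalization in the numerator to get an additional factor $L^{C}/\sqrt{N}$ per eigenvector coordinate of $\langle \vv_{2}, (A - B_{(st)})\uu_{1}\rangle$. Together these produce the claimed $1/(q^{3} N^{2})$ scale.
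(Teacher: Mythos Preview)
Your proposal is correct and follows essentially the same route as the paper. The paper writes the variational inequality via the spectral decomposition, e.g.\ $\langle \uu_{2},A\uu_{2}\rangle \le (\lambda_{1}-\lambda_{2})|\langle \uu_{2},\vv_{1}\rangle|^{2}+\lambda_{2}$, while you phrase the same inequality through an explicit Courant--Fischer test vector $(\vv_{2}-\alpha\uu_{1})/\sqrt{1-\alpha^{2}}$; in both cases the crux is the identity $(\lambda_{1}-\mu_{2})\langle \uu_{2},\vv_{1}\rangle=\langle \uu_{2},(A-B)\vv_{1}\rangle$ (resp.\ its dual), bounded by $L^{C}/(qN)$ via delocalization and divided by the large gap $\lambda_{1}-\mu_{2}\sim q$ to yield $|\alpha|\lesssim L^{C}/(q^{2}N)$ and hence the error $\alpha^{2}\cdot q\lesssim L^{D}/(q^{3}N^{2})$.
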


We write
\begin{align}
T_{1}&:=(Z_{st}v_{s}v_{t}+\eps_{0})(Z_{st}^{[k]}v_{s}^{[k]}v_{t}^{[k]}+\eps_{0}),\\
T_{2}&:=(Z_{st}v_{s}v_{t}+\eps_{0})(Z_{st}^{[k]}u_{s}^{[k]}u_{t}^{[k]}-\eps_{0}),\\
T_{3}&:=(Z_{st}u_{s}u_{t}-\eps_{0})(Z_{st}^{[k]}v_{s}^{[k]}v_{t}^{[k]}+\eps_{0}),\\
T_{4}&:=(Z_{st}u_{s}u_{t}-\eps_{0})(Z_{st}^{[k]}u_{s}^{[k]}u_{t}^{[k]}-\eps_{0}).
\end{align}
where
\begin{align}
\eps_{0}:=\frac{CL^{D}}{q^{3}N^{2}}.
\end{align}
Let us define the event $\mathcal{E}_{1}$: for some $D>0$
\begin{align}\label{eq: event1 cond1}
|x-\gamma_{2}|\le L^{D}N^{-2/3}\;\;\text{for all }x\in\{\lambda_{2},\mu_{2},\lambda_{2}^{[k]},\mu_{2}^{[k]}\},
\end{align}
\begin{align}\label{eq: event1 cond2}
y=\zeta q+\frac{1}{\zeta q}+o(1)\;\;\text{for all }y\in\{\lambda_{1},\mu_{1},\lambda_{1}^{[k]},\mu_{1}^{[k]}\},
\end{align}
\begin{align}\label{eq: event1 cond3}
\max_{1\le i_{1},i_{2},i_{3},i_{4}\le N}(\lVert \vv_{i_{1}}  \rVert_{\infty},\lVert \uu_{i_{2}} \rVert_{\infty},\lVert  \vv_{i_{3}}^{[k]}\rVert_{\infty},\lVert \uu_{i_{4}}^{[k]} \rVert_{\infty}) \le \frac{L^{D}}{\sqrt{N}},
\end{align}
Recall $\gamma_{2}$ is the second classical location defined by \eqref{eq: def of classical location} and $\zeta=(1-q^{2}/N)^{-1/2}$. It follows from Lemma \ref{lem: small perturbation variation} that on the event $\mathcal{E}_{1}$
\begin{align}\label{eq: event 2 effect}
\min(T_{1},T_{2},T_{3},T_{4})\le (\lambda_{2}-\mu_{2})(\lambda_{2}^{[k]}-\mu_{2}^{[k]})\le \max(T_{1},T_{2},T_{3},T_{4}).
\end{align}
\begin{lem}\label{lem: eigen vec small perturbation}
	Assume $q=N^{b}$and $b\in(\frac{4}{9},\frac{1}{2})$. Let $\uu_{2}^{(ij)}$ be the second top eigenvector of $B_{(ij)}$. Then, there exist $\rho>0$ and $\delta>0$ such that
	\begin{align}
	\prob\left(\max_{1\le i,j\le N}\inf_{s\in\{-1,1\}}\lVert s\vv_{2}-\uu_{2}^{(ij)} \rVert_{\infty}> N^{-\frac{1}{2}-\delta}\right)= O(N^{-\rho}\log{N})
	\end{align}
	and 
	\begin{align}
	2b+\rho>1.
	\end{align}
\end{lem}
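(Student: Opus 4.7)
The plan is to combine a resolvent identity exploiting the rank-$2$ structure of the perturbation $E := A - B_{(ij)}$ with a high-probability lower bound on the spectral gap $\lambda_2 - \lambda_3$. Writing $\eta := a_{ij} - a_{ij}''$, whose magnitude is $\lesssim 1/q$, the matrix $E$ has nonzero entries only at positions $(i,j)$ and $(j,i)$, both equal to $\eta$ (with an obvious modification if $i=j$). From the eigenvalue equation $A\uu_2^{(ij)} = \mu_2^{(ij)}\uu_2^{(ij)} + E\uu_2^{(ij)}$ and the projection identity $(\lambda_2 - \mu_2^{(ij)})\,s = \langle \vv_2, E\uu_2^{(ij)}\rangle$, where $s := \langle \vv_2, \uu_2^{(ij)}\rangle$, elementary algebra yields the clean componentwise formula
\[
(\uu_2^{(ij)})_m - s(\vv_2)_m = \eta\bigl[(u_2^{(ij)})_j\, \tilde G_{mi}(\mu_2^{(ij)}) + (u_2^{(ij)})_i\, \tilde G_{mj}(\mu_2^{(ij)})\bigr],
\]
where $\tilde G(z) := (A-z)^{-1} - (\lambda_2 - z)^{-1}\vv_2\vv_2^{T}$ removes the near-singular $\vv_2$ contribution from the resolvent of $A$.

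Next I would construct a good event $\mathcal{G}$ with $\prob(\mathcal{G}) \ge 1 - O(N^{-\rho}\log N)$. The non-overwhelming ingredient is Lemma \ref{lem: tail bound for gaps}, union-bounded over consecutive pairs $(k,k+1)$ with $2 \le k \le (\log N)^{C'}$, which yields $\lambda_k - \lambda_{k+1} \ge cN^{-1-\rho}$. Delocalization, rigidity, and the top-eigenvalue location (Lemmas \ref{lem: delocalization}, \ref{lem: eig val location}, \ref{lem: top eig val location}) hold for $A$ and every $B_{(ij)}$ by union-bounding their overwhelming-probability bounds over the $O(N^2)$ choices of $(i,j)$, and Lemma \ref{lem: small perturbation variation} gives $|\mu_2^{(ij)} - \lambda_2| \le L^{C}(qN)^{-1}$ uniformly. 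Since the latter is $\ll N^{-1-\rho}$ as soon as $b > \rho$, one deduces $\min_{k\neq 2}|\mu_2^{(ij)} - \lambda_k| \ge \frac{c}{3}N^{-1-\rho}$ uniformly in $(i,j)$.

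On $\mathcal{G}$, the spectral decomposition of $\tilde G$ and delocalization give
\[
|\tilde G_{ab}(\mu_2^{(ij)})| \le \frac{L^{2C}}{N}\sum_{k \neq 2}\frac{1}{|\lambda_k - \mu_2^{(ij)}|} = O(L^{2C} N^{\rho}),
\]
the dominant contribution being $k=3$; the $k=1$ term is suppressed by the gap of order $q$, and $\sum_{k \ge 4}$ is bounded by iterated gap estimates for small $k$ and by rigidity for large $k$, both producing the same order. Substituting into the identity and invoking $|\eta| \lesssim N^{-b}$ together with delocalization of $\uu_2^{(ij)}$ at the indices $i,j$,
\[
\bigl|(\uu_2^{(ij)})_m - s(\vv_2)_m\bigr| \le O(L^{3C} N^{\rho - b - 1/2}).
\]
Choosing $\rho$ in the window $(1-2b,\,b-\delta)$, which is nonempty in the regime $b > 4/9$, makes this bound $\le N^{-1/2-\delta+o(1)}$. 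The sign adjustment from $s$ to $s^{*} := \operatorname{sign}(s) \in \{-1,1\}$ costs only $(1-|s|)\|\vv_2\|_\infty$, and the $\ell^2$ consequence of the above bound forces $1-|s|^2 = \|\uu_2^{(ij)} - s\vv_2\|_2^2 = o(1)$, making the correction negligible.

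The principal obstacle is the uniform control of $\min_{k\neq 2}|\mu_2^{(ij)} - \lambda_k|$ over all $(i,j)$: the critical gap $\lambda_2 - \lambda_3$ is genuinely random and accessible only through the tail bound of Lemma \ref{lem: tail bound for gaps}, which affords probability $O(N^{-\rho}\log N)$ rather than overwhelming probability. The key observation that permits a single application of that tail bound is that this gap depends only on $A$, not on $(i,j)$; Lemma \ref{lem: small perturbation variation} then transfers control to each $\mu_2^{(ij)}$ deterministically. The balance between needing $2b + \rho > 1$ (so that the failure probability itself is small enough to be useful) and needing $\rho < b$ (so that the resulting $\ell^\infty$ bound falls below $N^{-1/2}$) is precisely what couples the two parameters and pins down the admissible range of $b$.
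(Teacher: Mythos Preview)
Your approach is correct and takes a genuinely different route from the paper. The paper expands $\uu_2^{(ij)} = \alpha\vv_1 + \beta\vv_2 + \gamma_x\xx + \gamma_y\yy$ with a two-scale split of the remainder: $\xx\in\operatorname{span}(\vv_3,\dots,\vv_{N'})$, $N'=\lfloor N^\theta\rfloor$, and $\yy\in\operatorname{span}(\vv_{N'+1},\dots,\vv_N)$. It bounds $|\gamma_x|$ via the gap tail bound and $|\gamma_y|$ via rigidity, paying $\|\xx\|_\infty\le L^{D}\sqrt{N'/N}$ and $\|\yy\|_\infty\le 1$ separately; optimizing over $\theta$ and $\rho$ under $\rho+\theta<b$, $\theta>1-3b/2$, $2b+\rho>1$ is what forces $b>4/9$. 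Your direct identity $\uu_2^{(ij)}-s\vv_2=\tilde G(\mu_2^{(ij)})E\uu_2^{(ij)}$ followed by an entrywise bound on $\tilde G$ (delocalization of \emph{all} eigenvectors plus summable denominators $\sum_{k\neq 2}|\lambda_k-\mu_2^{(ij)}|^{-1}=O(L^{C}N^{1+\rho})$) avoids the split and the extra parameter $\theta$ entirely. Both proofs rest on the same non-overwhelming input, the gap tail bound applied once to $A$, and both transfer to every $(i,j)$ through the deterministic perturbation $|\mu_2^{(ij)}-\lambda_2|\le L^{C}(qN)^{-1}$. In fact your constraint is only $1-2b<\rho<b$, which is nonempty already for $b>1/3$; your remark that nonemptiness requires $b>4/9$ understates your own argument and simply matches the lemma's hypothesis rather than your method's actual reach. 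One small citation point: the bound $|\mu_2^{(ij)}-\lambda_2|\le L^{C}(qN)^{-1}$ is not the statement of Lemma~\ref{lem: small perturbation variation} but an immediate consequence of \eqref{eq: compare lambda_2 mu_2} combined with delocalization, as derived at the start of the paper's proof of this lemma.
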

\begin{remark}
	Lemma \ref{lem: eigen vec small perturbation} provides us with the available sparsity regime: $b\in(\frac{4}{9},\frac{1}{2})$.
\end{remark}
Next, let $\uu_{(ij)}$ and $\uu_{(ij)}^{[k]}$ be the second top eigenvectors of $B^{(ij)}$ and $B^{[k]}_{(ij)}$. We define the event $\mathcal{E}_{2}$ :
\begin{align}
\max_{1\le i,j \le N}\lVert \vv_{2} - \uu_{(ij)} \rVert_{\infty}\le N^{-\frac{1}{2}-\delta} \quad \text{and}
\max_{1\le i,j \le N}\lVert \vv_{2}^{[k]} - \uu_{(ij)}^{[k]} \rVert_{\infty}\le N^{-\frac{1}{2}-\delta}.
\end{align}
According to Lemma \ref{lem: eigen vec small perturbation} (choosing the $\pm$-phase properly for $\uu_{(ij)}$ and $\uu_{(ij)}^{[k]}$), for some $\delta>0$ and $\rho>0$, we have
$\prob(\mathcal{E}_{2}^{c})= O(N^{-\rho}\log{N})$ and also $q^{2}N^{\rho}\gg L^{D}N$ for any $D>0$ . Set the event $\mathcal{E}:=\mathcal{E}_{1}\cap\mathcal{E}_{2}$. On the event $\mathcal{E}$, 
we observe that $v_{s}v_{t}u_{s}^{[k]}u_{t}^{[k]}$, $u_{s}u_{t}v_{s}^{[k]}v_{t}^{[k]}$ and $u_{s}u_{t}u_{s}^{[k]}u_{t}^{[k]}$ can be replaced with
\begin{align}
v_{s}v_{t}v_{s}^{[k]}v_{t}^{[k]}+O\left(\frac{L^{D}}{N^{2+\delta}}\right).
\end{align}
Thus, on the event $\mathcal{E}$, it follows that from Lemma \ref{lem: small perturbation variation}
\begin{align}
(\lambda_{2}-\mu_{2})(\lambda_{2}^{[k]}-\mu_{2}^{[k]})\ge Z_{st}Z_{st}^{[k]}v_{s}v_{t}v_{s}^{[k]}v_{t}^{[k]}+|Z_{st}Z_{st}^{[k]}|O\left(\frac{L^{D}}{N^{2+\delta}}\right)+O\left(\frac{L^{D}}{q^{4}N^{3}}\right).
\end{align}
We shall see
\begin{lem}\label{lem: expectation estimate}
	\begin{align}\label{eq: expectation estimate 1}
	\E\left[ Z_{st}Z_{st}^{[k]}v_{s}v_{t}v_{s}^{[k]}v_{t}^{[k]}\indic_{\mathcal{E}} \right]\gtrsim \frac{1}{N^{3}}\E\left[\sum_{1\le i,j \le n}v_{i}v_{j}v_{i}^{[k]}v_{j}^{[k]}\right]+o(N^{-3})
	\end{align}
	and
	\begin{align}\label{eq: expectation estimate 2}
	\E\left[ (\lambda_{2}-\mu_{2})(\lambda_{2}^{[k]}-\mu_{2}^{[k]})\indic_{\mathcal{E}^{c}} \right]=o(N^{-3}).
	\end{align}
\end{lem}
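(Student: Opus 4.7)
The plan is to treat the two parts separately. For \eqref{eq: expectation estimate 2}, on the overwhelming-probability event $\mathcal{E}_{1}$, Lemma \ref{lem: small perturbation variation}, the delocalization bound \eqref{eq: event1 cond3}, and the deterministic estimate $|Z_{st}|\le 2\zeta/q$ together give $|\lambda_{2}-\mu_{2}|\le |Z_{st}|L^{2D}/N+\eps_{0}\le L^{2D}/(qN)$, and analogously for the $[k]$-version, so that $|(\lambda_{2}-\mu_{2})(\lambda_{2}^{[k]}-\mu_{2}^{[k]})|\le L^{4D}/(q^{2}N^{2})$ throughout $\mathcal{E}_{1}$. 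Since $\mathcal{E}^{c}\cap\mathcal{E}_{1}\subseteq\mathcal{E}_{2}^{c}$ has probability $O(N^{-\rho}\log N)$ by Lemma \ref{lem: eigen vec small perturbation}, its contribution is at most $L^{4D}N^{-2-2b-\rho}\log N=o(N^{-3})$, where the final step uses precisely the hypothesis $2b+\rho>1$. On $\mathcal{E}_{1}^{c}$ the deterministic Weyl bound $|\lambda_{2}-\mu_{2}|\le\|B_{(st)}-A\|_{\mathrm{op}}=O(q^{-1})$ combined with $\prob(\mathcal{E}_{1}^{c})\le N^{-D}$ (for any $D$) makes the contribution negligible.

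For \eqref{eq: expectation estimate 1}, an essentially identical analysis, now with $|v_{s}v_{t}v_{s}^{[k]}v_{t}^{[k]}|\le L^{4D}/N^{2}$ in place of the eigenvalue bound, shows $\E[Z_{st}Z_{st}^{[k]}v_{s}v_{t}v_{s}^{[k]}v_{t}^{[k]}\indic_{\mathcal{E}^{c}}]=o(N^{-3})$, so it suffices to lower bound the unconditional expectation. The key decoupling step is as follows: for each $(s,t)$, let $\breve{A}^{(s,t)}$ (resp.\ $\breve{A}^{(s,t),[k]}$) be obtained from $A$ (resp.\ $A^{[k]}$) by zeroing the $(s,t)$ and $(t,s)$ entries, and let $\breve{\vv}_{2}^{(s,t)}$ and $\breve{\vv}_{2}^{(s,t),[k]}$ be the corresponding second-top eigenvectors. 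By construction these are functions of the other entries and hence independent of the triple $(a_{st},a_{st}',a_{st}'')$. Since the perturbation $\|A-\breve{A}^{(s,t)}\|_{\mathrm{op}}=O(q^{-1})$ matches the scale in Lemma \ref{lem: eigen vec small perturbation}, the same argument yields $\|\vv_{2}-\breve{\vv}_{2}^{(s,t)}\|_{\infty}\vee\|\vv_{2}^{[k]}-\breve{\vv}_{2}^{(s,t),[k]}\|_{\infty}\le N^{-1/2-\delta}$ with probability $1-O(N^{-\rho}\log N)$, up to the eigenvector sign (which is invisible to the product $v_{s}v_{t}v_{s}^{[k]}v_{t}^{[k]}$).

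Now let $\mathcal{F}$ be the $\sigma$-algebra generated by everything except $(a_{st},a_{st}',a_{st}'')$, including $(s,t)$ and $S_{k}$. Then $\breve{v}_{s}^{(s,t)}\breve{v}_{t}^{(s,t)}\breve{v}_{s}^{(s,t),[k]}\breve{v}_{t}^{(s,t),[k]}$ is $\mathcal{F}$-measurable while $(a_{st},a_{st}',a_{st}'')$ remain i.i.d.\ with variance $N^{-1}$ given $\mathcal{F}$, and a direct computation gives
\begin{align*}
\E[Z_{st}Z_{st}^{[k]}\mid\mathcal{F}] = \frac{(1+\indic(s\neq t))^{2}}{N}\bigl(1+\indic((st)\notin S_{k})\bigr),
\end{align*}
using $\E[(a_{st}-a_{st}'')^{2}]=2/N$ if $(st)\notin S_{k}$ and $\E[(a_{st}-a_{st}'')(a_{st}'-a_{st}'')]=\var(a_{st}'')=1/N$ if $(st)\in S_{k}$. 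Replacing $\breve{v}$ back by $v$ (at cost $O(L^{D}/N^{3+\delta})=o(N^{-3})$, using $\E[|Z_{st}Z_{st}^{[k]}|]=O(N^{-1})$), writing $1+\indic((st)\notin S_{k})=2-\indic((st)\in S_{k})$ (the latter piece contributing $O(kL^{4D}/N^{5})=o(N^{-3})$ since $k=N^{\tau}$ with $\tau<2$), averaging over $(s,t)$ uniform on $\{(i,j):i\le j\}$, and using $\sum_{i,j}v_{i}v_{j}v_{i}^{[k]}v_{j}^{[k]}=\langle\vv_{2},\vv_{2}^{[k]}\rangle^{2}$ together with the negligible diagonal $\sum_{s}v_{s}^{2}(v_{s}^{[k]})^{2}\le L^{2D}/N$, we obtain
\begin{align*}
\E[Z_{st}Z_{st}^{[k]}v_{s}v_{t}v_{s}^{[k]}v_{t}^{[k]}] = \frac{8}{N^{3}}\E[\langle\vv_{2},\vv_{2}^{[k]}\rangle^{2}]+o(N^{-3}),
\end{align*}
which yields \eqref{eq: expectation estimate 1} since $8\ge 1$.

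The delicate step is the adaptation of Lemma \ref{lem: eigen vec small perturbation} to the zeroed-entry perturbation (which should follow from the same resolvent and gap arguments) together with the careful bookkeeping that keeps every error term at $o(N^{-3})$ under only the hypothesis $2b+\rho>1$; the monotonicity lemma (Lemma \ref{lem: monotonicity}) is not invoked here, but enters the broader proof strategy via the identification $\E[(\lambda_{2}-\mu_{2})(\lambda_{2}^{[k]}-\mu_{2}^{[k]})]=I_{k+1}$ for the subsequent application of Lemma \ref{lem: superconcentration}.
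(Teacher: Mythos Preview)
Your argument is correct and follows essentially the same route as the paper: split $\mathcal{E}^{c}=(\mathcal{E}_{1}\cap\mathcal{E}_{2}^{c})\cup\mathcal{E}_{1}^{c}$, exploit $2b+\rho>1$ on the former, and decouple the main term via an auxiliary second-top eigenvector independent of $(a_{st},a_{st}',a_{st}'')$. The only cosmetic differences are that the paper replaces the $(st)$-entry by a fresh independent copy $a_{st}'''$ rather than zeroing it, centers $Z_{st}Z_{st}^{[k]}$ before decoupling (so that independence kills the centered remainder directly), and uses a Hilbert--Schmidt bound with Cauchy--Schwarz on $\mathcal{E}_{1}^{c}$ in place of your Weyl bound.
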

Since $\E[|Z_{st}Z_{st}^{[k]}|]=O(N^{-1})$, 
the main estimate
\begin{align}\label{eq: resulting ineq}
\E\left[ (\lambda_{2}-\mu_{2})(\lambda_{2}^{[k]}-\mu_{2}^{[k]}) \right]\gtrsim \frac{1}{N^{3}}\E\left[\sum_{1\le i,j \le n}v_{i}v_{j}v_{i}^{[k]}v_{j}^{[k]}\right]+o(N^{-3})
\end{align}
follows from \eqref{eq: expectation estimate 1} and \eqref{eq: expectation estimate 2}. Now we are ready to prove the main statement. From \eqref{eq: variance estimate}, \eqref{eq: application of superconcentration lem} and \eqref{eq: resulting ineq}, it is derived that
\begin{align}
\E\left[\sum_{1\le i,j \le n}v_{i}v_{j}v_{i}^{[k]}v_{j}^{[k]}\right] 
\le \frac{L^{D}N^{5/3}}{k}+o(1)
\end{align}
By Jensen's inequality, we have
\begin{align}
\E\left[\sum_{1\le i,j \le n}v_{i}v_{j}v_{i}^{[k]}v_{j}^{[k]}\right] \ge \left(\E|\langle \vv_{2}, \vv_{2}^{[k]} \rangle|\right)^{2}.
\end{align}
Since $k\gg N^{5/3}L^{D}$, the desired conclusion follows. Lemmas \ref{lem: eigen vec small perturbation}, \ref{lem: small perturbation variation} and \ref{lem: expectation estimate} are proved in Section \ref{sec: thm1}.

\subsection{Top-level proof of Theorem \ref{thm: main2}}
For $z=E+i\eta$ with $\eta>0$ and $E\in\R$, we introduce the resolvent matrices
\begin{align}
R(z)=(A-zI)^{-1},
\end{align}
where $I$ denotes the identity matrix. We denote by $R^{[k]}(z)$ the resolvent of $A^{[k]}$. Theorem \ref{thm: main2} is proved by showing the lemma below. We write $\vv_{2}=(v_{2,1},\cdots,v_{2,N})$ and $\vv^{[k]}_{2}=(v^{[k]}_{2,1},\cdots, v^{[k]}_{2,N})$.

\begin{lem}\label{lem: main lem for thm2}
	Suppose $q=N^{b}$ with $b\in(\frac{1}{3},\frac{1}{2})$ and $k=N^{\tau}$ with $\tau\in(0,2)$. Assume $\tau < 2b+\frac{2}{3}$. For any $\eps>0$, we have with probability at least $1-\eps$
	\begin{align}
	\max_{1\le i,j\le N} N\lvert v_{2,i}v_{2,j} - v_{2,i}^{[k]}v_{2,j}^{[k]} \rvert = O(L^{-2}).
	\end{align}
\end{lem}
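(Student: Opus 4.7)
My plan is to represent the rank-one spectral projector $\vv_{2}\vv_{2}^{T}$ by a contour integral of the resolvent and to bound the difference of the projectors of $A$ and $A^{[k]}$ this way. Concretely, if $\Gamma\subset\C$ is a closed counter-clockwise contour that encircles $\lambda_{2}$ and $\lambda_{2}^{[k]}$ but no other eigenvalue of $A$ or $A^{[k]}$, then
\begin{align*}
v_{2,i}v_{2,j} - v_{2,i}^{[k]}v_{2,j}^{[k]} = -\frac{1}{2\pi i}\oint_{\Gamma}\bigl(R_{ij}(z) - R_{ij}^{[k]}(z)\bigr)\, dz,
\end{align*}
so the task reduces to bounding the integrand uniformly on $\Gamma$ and multiplying by the length of $\Gamma$.

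I would then define a good event $\mathcal{E}$ of probability at least $1-\eps$ on which: eigenvalue rigidity holds simultaneously for $A$ and $A^{[k]}$ (Lemma \ref{lem: eig val location}), the top eigenvalues satisfy $\lambda_{1},\lambda_{1}^{[k]}=\zeta q+o(1)$ (Lemma \ref{lem: top eig val location}), all relevant eigenvectors are delocalized (Lemma \ref{lem: delocalization}), and the gaps $\lambda_{2}-\lambda_{3}$ and $\lambda_{2}^{[k]}-\lambda_{3}^{[k]}$ are at least $cN^{-1-\rho}$ for a small $\rho>0$ (Lemma \ref{lem: tail bound for gaps}), with $\rho$ fixed so the total failure probability is at most $\eps$. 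On $\mathcal{E}$ I would take $\Gamma$ to be a circle centered at $(\lambda_{2}+\lambda_{2}^{[k]})/2$ whose radius is tuned to fit strictly between $\max\{\lambda_{3},\lambda_{3}^{[k]}\}$ and $\min\{\lambda_{1},\lambda_{1}^{[k]}\}$; separation from above is trivial because $\lambda_{1}\approx\zeta q\gg 2\approx\gamma_{2}$, and separation from below is possible thanks to the gap bound.

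For $z\in\Gamma$ I would then invoke Lemma \ref{lem: lem13 in BLZ}, the resolvent-variation estimate under resampling, to bound $|R_{ij}(z)-R_{ij}^{[k]}(z)|$ uniformly in $i,j$. This estimate scales with the number of resampled entries through a factor involving $\sqrt{k}$, with the sparsity through $q$, and with the distance from $z$ to the spectrum, which on $\mathcal{E}$ is at least $cN^{-1-\rho}$; the entrywise isotropic local semicircle law combined with delocalization keeps $R_{ij}(z)$ itself much smaller than the crude operator-norm bound $1/\mathrm{dist}(z,\mathrm{spec})$. Plugging in $k=N^{\tau}$ and $q=N^{b}$, multiplying by the length of $\Gamma$, and using the hypothesis $\tau<2b+\tfrac{2}{3}$, the product becomes $O(L^{-2}/N)$ uniformly in $i,j$, which is exactly the claim after multiplying by $N$.

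The hard part is juggling the contour size against the resolvent bound: the gap lower bound forces $\Gamma$ to have length $\sim N^{-1-\rho}$, which pushes $z$ close to the spectrum and inflates resolvent norms. The argument succeeds because Lemma \ref{lem: lem13 in BLZ} controls \emph{entries} of the resolvent rather than the operator norm, and this entrywise gain is precisely what allows the sparsity regime $\tau<2b+\tfrac{2}{3}$. The same trade-off also explains the lower restriction $b>\tfrac{1}{3}$, since that is the range where entrywise delocalization and the sparse local law of \cite{EKYY12,EKYY13} are available in the form needed.
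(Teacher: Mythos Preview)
Your contour-integral plan has two concrete mismatches with the tools you invoke, and they are not just cosmetic.

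First, the gap input. You appeal to Lemma~\ref{lem: tail bound for gaps}, which only gives $\lambda_{2}-\lambda_{3}\ge cN^{-1-\rho}$. A contour of that radius cannot be guaranteed to enclose $\lambda_{2}^{[k]}$: the only control on $|\lambda_{2}-\lambda_{2}^{[k]}|$ is Proposition~\ref{lem: lem12 in BLZ}, which yields a bound of order $N^{-2/3}L^{-C}$, far larger than $N^{-1-\rho}$. Conversely, if you enlarge $\Gamma$ to radius $\sim N^{-2/3}$ to swallow $\lambda_{2}^{[k]}$, the $N^{-1-\rho}$ gap no longer keeps $\lambda_{3}$ outside. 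What is actually needed here is a gap on scale $N^{-2/3}$; this is available (with probability $\ge 1-\eps$) from \cite[Theorem~2.7]{EKYY12}, and that is what the paper uses in Lemma~\ref{lem: lem14 in BLZ}.

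Second, and more seriously, Lemma~\ref{lem: lem13 in BLZ} is stated and proved only on the horizontal line $\eta=N^{-2/3}L^{-C_{1}}$. Its proof runs through the local semicircle law and the bound $\max_{i}|R_{ii}^{[t]}(z)|\le C$, which require $N\eta\gg 1$; these inputs break down as $\eta\to 0$. A closed contour around $\lambda_{2}$ necessarily contains points with arbitrarily small (even zero) imaginary part, so you cannot simply ``invoke Lemma~\ref{lem: lem13 in BLZ} for $z\in\Gamma$''. Making a rectangular contour work would require separate treatment of the vertical segments, where the available local-law estimates degenerate.

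The paper sidesteps both issues by not using a contour integral at all. It observes that at the single point $z=\lambda_{2}+i\eta$ with $\eta=N^{-2/3}L^{-C_{1}}$, the Poisson-type sum
\[
\eta\,\mathrm{Im}\,R(z)_{ij}=\sum_{m}\frac{\eta^{2}\,v_{m,i}v_{m,j}}{(\lambda_{m}-\lambda_{2})^{2}+\eta^{2}}
\]
approximates $v_{2,i}v_{2,j}$ to $O(L^{-2}/N)$ once $\eta\ll\lambda_{2}-\lambda_{3}$ (the $N^{-2/3}$ gap from \cite{EKYY12}) and delocalization is used for the other terms; the same holds for $R^{[k]}$ at the \emph{same} $z$ thanks to Proposition~\ref{lem: lem12 in BLZ}. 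Then one subtraction and a single application of Lemma~\ref{lem: lem13 in BLZ} at that fixed $z$ finishes the proof. This is Lemma~\ref{lem: lem14 in BLZ} plus a triangle inequality, and it never leaves the line $\eta=N^{-2/3}L^{-C_{1}}$.
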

This lemma is proved in Section \ref{sec: thm2}. Let us fix $\eps>0$. From now on, we shall work on the event such that the conclusions of Lemma \ref{lem: main lem for thm2} hold and we have for some $C_{1}>0$
\begin{align}
\max\left( \lVert \vv_{2} \rVert_{\infty}, \lVert \vv_{2}^{[k]} \rVert_{\infty} \right)\le \frac{(\log{N})^{C_{1}}}{\sqrt{N}}.
\end{align}
According to Lemma \ref{lem: delocalization} and Lemma \ref{lem: main lem for thm2}, this event is of probability at least $1-2\eps$ for large $N$. For each $i$, choose $s_{i}\in\{\pm 1\}$ so that $\text{sgn}(s_{i}v_{2,i}) = \text{sgn}(v_{2,i}^{[k]}) $. Using Lemma \ref{lem: main lem for thm2} with $i=j$, we have
\begin{align}
\sqrt{N}|s_{i}v_{2,i}-v_{2,i}^{[k]}| = O(L^{-1}).
\end{align}
We want to choose a single $s\in\{\pm 1\}$ such that we have for all $i\in\{1,\cdots,N\}$
\begin{align}\label{eq: conclusion of thm2}
\sqrt{N}|sv_{2,i}-v_{2,i}^{[k]}| = o(1).
\end{align}
If $\sqrt{N}|v_{2,i}| \le L^{-1/3}$, then regardless of the choice of $s$, it follows that
\begin{align}
\sqrt{N}|sv_{2,i}-v_{2,i}^{[k]}| &\le \sqrt{N}\left(|v_{2,i}|+|v_{2,i}^{[k]}|\right) \nn\\
& \le L^{-1/3} + \sqrt{N}\left(|v_{2,i}^{[k]}-s_{i}v_{2,i}|+|v_{2,i}|\right) \nn \\
& = O(L^{-1/3}).
\end{align}
Next, consider the other case $\sqrt{N}|v_{2,i}| > L^{-1/3}$. For all $1\le i,j\le N$, we claim 
\begin{align}\label{eq: main claim for thm2}
(1-s_{i}s_{j})N|v_{2,i}v_{2,j}| = O\big(L^{-1}(\log{N})^{C_{1}}\big).
\end{align}
If $s_{i}=s_{j}$, it is trivial so we assume $s_{i}\neq s_{j}$. For brevity, we only consider the case $s_{i}=1$ and $s_{j}=-1$. Furthermore, it is enough to check the case
\begin{align}
v_{2,i}>0,\quad v_{2,i}^{[k]}>0,\quad v_{2,j}>0 \quad\text{and}\quad v_{2,j}^{[k]}<0.
\end{align}
The other cases can be dealt with similarly. Note that
\begin{align}
\sqrt{N}|v_{2,i}-v_{2,i}^{[k]}|=O(L^{-1}), \quad \sqrt{N}|v_{2,j}+v_{2,j}^{[k]}|=O(L^{-1}),
\end{align}
and
\begin{align}
\max\left(|v_{2,i}|,|v_{2,j}|,|v_{2,i}^{[k]}|,|v_{2,j}^{[k]}|\right)\le \frac{(\log{N})^{C_{1}}}{\sqrt{N}}.
\end{align}
As a result, it follows that
\begin{align}
(1-s_{i}s_{j})N|v_{2,i}v_{2,j}| &= 2Nv_{2,i}v_{2,j} \nn \\
&\le N\left|v_{2,i}v_{2,j} - v_{2,i}^{[k]}v_{2,j}^{[k]}\right| + Nv_{2,i}^{[k]}v_{2,j}^{[k]} + Nv_{2,i}v_{2,j} \nn\\
&\le O(L^{-2}) + Nv_{2,i}^{[k]}v_{2,j}^{[k]} + Nv_{2,i}v_{2,j}
\end{align}
where the last inequality is due to Lemma \ref{lem: main lem for thm2}.
Since
\begin{align}
v_{2,i}v_{2,j} = \left(v_{2,i}-v_{2,i}^{[k]}\right)v_{2,j}+v_{2,i}^{[k]}\left(v_{2,j}+v_{2,j}^{[k]}\right) - v_{2,i}^{[k]}v_{2,j}^{[k]},
\end{align}
we establish the claim \eqref{eq: main claim for thm2}.\\
\indent Let us define a set $J:=\{1\le i\le N : \sqrt{N}|v_{2,i}| > L^{-1/3}\}$. If $J=\emptyset$, there is nothing to prove so we suppose $J\neq\emptyset$. 
Assuming this, we find that $s_{i}=s_{j}$ for any $i,j\in J$. Otherwise, we have
\begin{align}
(1-s_{i}s_{j})N|v_{2,i}v_{2,j}|=2N|v_{2,i}v_{2,j}|\ge 2L^{-2/3},
\end{align}
which contradicts \eqref{eq: main claim for thm2}. Thus, we obtain \eqref{eq: conclusion of thm2} by choosing $s=s_{i}$ for some $i\in J$ (such a choice is well-defined because $s_{i}$ is the same for all $i\in J$).\\

\section{Proof of the monotonicity lemma} \label{sec: monotonicity}
First of all, we split $I_{i}$ into two parts.
\begin{align}
I_{i} &= \E\left[ \left(f(\mathcal{X})-f(\mathcal{X}^{(j)})\right) \left(f(\mathcal{X}^{\sigma([i-1])})-f(\mathcal{X}^{(j)\circ\sigma([i-1])})\right) \right] \nn\\
& = \E\left[ \left(f(\mathcal{X})-f(\mathcal{X}^{(j)})\right) \left(f(\mathcal{X}^{\sigma([i-1])})-f(\mathcal{X}^{(j)\circ\sigma([i-1])})\right)\; ; \; j\in\sigma([i-1]) \right] \nn \\
& \quad + \E\left[ \left(f(\mathcal{X})-f(\mathcal{X}^{(j)})\right) \left(f(\mathcal{X}^{\sigma([i-1])})-f(\mathcal{X}^{(j)\circ\sigma([i-1])})\right)\; ; \; j\notin\sigma([i-1]) \right]
\end{align}
Let $\E_{\sigma, j}(\cdot)$ be the conditional expectation with respect to $\sigma$ and $j$. We observe that
\begin{align}\label{eq: decomposition of expec}
\E&\left[ \left(f(\mathcal{X})-f(\mathcal{X}^{(j)})\right) \left(f(\mathcal{X}^{\sigma([i-1])})-f(\mathcal{X}^{(j)\circ\sigma([i-1])})\right)\; ; \; j\notin\sigma([i-1]) \right] \nn \\
&\;\; = \frac{1}{n!}\sum_{\substack{\sigma\in S_{N} \\ j\notin \sigma([i-1])} } \E_{\sigma, j}\left[ \left(f(\mathcal{X})-f(\mathcal{X}^{(j)})\right) \left(f(\mathcal{X}^{\sigma([i-1])})-f(\mathcal{X}^{(j)\circ\sigma([i-1])})\right)\right]
\left(\frac{1}{n}\right) \nn \\
&\;\; = \E\left[ \left( f(\mathcal{X}) - f(\mathcal{X}^{\sigma(i)}) \right) \left( f(\mathcal{X}^{\sigma([i-1])}) - f(\mathcal{X}^{\sigma([i])}) \right) \right]  \left(\frac{n-(i-1)}{n}\right).
\end{align}
According to \cite[Lemma 2]{BLZ19}, we have for all $i\in[n-1]$
\begin{align}
\E&\left[ \left( f(\mathcal{X}) - f(\mathcal{X}^{\sigma(i)}) \right) \left( f(\mathcal{X}^{\sigma([i-1])}) - f(\mathcal{X}^{\sigma([i])}) \right) \right] \nn \\
&\quad \ge \E\left[ \left( f(\mathcal{X}) - f(\mathcal{X}^{\sigma(i+1)}) \right) \left( f(\mathcal{X}^{\sigma([i])}) - f(\mathcal{X}^{\sigma([i+1])}) \right) \right].
\end{align}
Thus, it is enough to show that 
\begin{align}
\E&\left[ \left(f(\mathcal{X})-f(\mathcal{X}^{(j)})\right) \left(f(\mathcal{X}^{\sigma([i-1])})-f(\mathcal{X}^{(j)\circ\sigma([i-1])})\right)\; ; \; j\in\sigma([i-1]) \right] \nn \\
&\quad \ge \E \left[ \left(f(\mathcal{X})-f(\mathcal{X}^{(j)})\right) \left(f(\mathcal{X}^{\sigma([i])})-f(\mathcal{X}^{(j)\circ\sigma([i])})\right)\; ; \; j\in\sigma([i]) \right].
\end{align}
Fix $\sigma$ and take $j\in\sigma([i-1])$. We claim that
\begin{align}
\E_{\sigma, j}&\left[ \left(f(\mathcal{X})-f(\mathcal{X}^{(j)})\right) \left(f(\mathcal{X}^{\sigma([i-1])})-f(\mathcal{X}^{(j)\circ\sigma([i-1])})\right) \right] \nn \\
&\quad \ge \E_{\sigma, j} \left[ \left(f(\mathcal{X})-f(\mathcal{X}^{(j)})\right) \left(f(\mathcal{X}^{\sigma([i])})-f(\mathcal{X}^{(j)\circ\sigma([i])})\right)\right].
\end{align}For brevity, we assume $\sigma$ is the identity, i.e., $\sigma(k)=k$ for all $k\in[n]$, and $j=1$. We want to show
\begin{align}
\E_{\sigma, j}&\left[ \left(f(\mathcal{X})-f(\mathcal{X}^{(1)})\right) \left(f(\mathcal{X}^{[i-1]})-f(\mathcal{X}^{(1)\circ[i-1]})\right) \right] \nn \\
&\quad \ge \E_{\sigma, j}\left[ \left(f(\mathcal{X})-f(\mathcal{X}^{(1)})\right) \left(f(\mathcal{X}^{[i]})-f(\mathcal{X}^{(1)\circ[i]})\right) \right].
\end{align}
Note that $\mathcal{X}^{(1)}=(\mathcal{X}_{1}'',\mathcal{X}_{2},\cdots,\mathcal{X}_{n})$. Let us use the following notation:
\begin{align}
J&:=(\mathcal{X}_{2},\cdots,\mathcal{X}_{i-1}), \nn\\
J'&:=(\mathcal{X}_{2}',\cdots,\mathcal{X}_{i-1}'), \nn\\
K&:=(\mathcal{X}_{i+1},\cdots,\mathcal{X}_{n}).
\end{align}
Since $(\mathcal{X}_{1},J)$ and $(\mathcal{X}_{1}',J')$ are i.i.d., we have
\begin{align}
\E_{\sigma, j}&\left[ \left(f(\mathcal{X}_{1},J,\mathcal{X}_{i},K)-f(\mathcal{X}_{1}'',J,\mathcal{X}_{i},K)\right) \left(f(\mathcal{X}_{1}',J',\mathcal{X}_{i},K)-f(\mathcal{X}_{1}'',J',\mathcal{X}_{i},K)\right) | \mathcal{X}_{1}'',\mathcal{X}_{i},K\right] \nn \\
&= \left( \E_{\sigma, j}\left[ \left(f(\mathcal{X}_{1},J,\mathcal{X}_{i},K)-f(\mathcal{X}_{1}'',J,\mathcal{X}_{i},K)\right) | \mathcal{X}_{1}'',\mathcal{X}_{i},K\right] \right)^{2}.
\end{align}
Similarly, it follows that
\begin{align}
\E_{\sigma, j}&\left[ \left(f(\mathcal{X}_{1},J,\mathcal{X}_{i},K)-f(\mathcal{X}_{1}'',J,\mathcal{X}_{i},K)\right) \left(f(\mathcal{X}_{1}',J',\mathcal{X}_{i}',K)-f(\mathcal{X}_{1}'',J',\mathcal{X}_{i}',K)\right) | \mathcal{X}_{1}'',K\right] \nn \\
&= \left( \E_{\sigma, j} \left[ \left(f(\mathcal{X}_{1},J,\mathcal{X}_{i},K)-f(\mathcal{X}_{1}'',J,\mathcal{X}_{i},K)\right) | \mathcal{X}_{1}'',K \right] \right)^{2} \nn \\
&= \left( \E_{\sigma, j} \left[ \E_{\sigma, j} \left[ \left(f(\mathcal{X}_{1},J,\mathcal{X}_{i},K)-f(\mathcal{X}_{1}'',J,\mathcal{X}_{i},K)\right) | \mathcal{X}_{1}'',\mathcal{X}_{i},K \right]  | \mathcal{X}_{1}'',K \right] \right)^{2} \nn \\
&\le \E_{\sigma, j} \left[ \left( \E_{\sigma, j} \left[ \left(f(\mathcal{X}_{1},J,\mathcal{X}_{i},K)-f(\mathcal{X}_{1}'',J,\mathcal{X}_{i},K)\right) | \mathcal{X}_{1}'',\mathcal{X}_{i},K \right]\right)^{2}  | \mathcal{X}_{1}'',K \right].
\end{align}
Therefore, we obtain that
\begin{align}
\E_{\sigma, j}&\left[ \left(f(\mathcal{X})-f(\mathcal{X}^{(1)})\right) \left(f(\mathcal{X}^{[i]})-f(\mathcal{X}^{(1)\circ[i]})\right) \right] \nn \\
&\le \E_{\sigma, j}\left[  \E_{\sigma, j} \left[ \left( \E_{\sigma, j} \left[ \left(f(\mathcal{X}_{1},J,\mathcal{X}_{i},K)-f(\mathcal{X}_{1}'',J,\mathcal{X}_{i},K)\right) | \mathcal{X}_{1}'',\mathcal{X}_{i},K \right]\right)^{2}  | \mathcal{X}_{1}'',K \right] \right] \nn \\
& = \E_{\sigma, j}\left[ \left(f(\mathcal{X})-f(\mathcal{X}^{(j)})\right) \left(f(\mathcal{X}^{\sigma([i-1])})-f(\mathcal{X}^{(j)\circ\sigma([i-1])})\right) \right] \nn.
\end{align}
\begin{remark}
	When we prove Lemma \ref{lem: expectation estimate} later, this monotonicity lemma, the inequality \eqref{eq: monotonicity}, is used to deal with the case $k\ge N^{2}L^{-D}$. In other words, due to the monotonicity, it is enough to consider the case $k< N^{2}L^{-D}$
\end{remark}

\section{Excessive resampling}\label{sec: thm1}

\subsection{Proof of Lemma \ref{lem: small perturbation variation}}
By spectral theorem, we have
\begin{align}
\langle \uu_{2},A\uu_{2}\rangle = \lambda_{1}|\langle \uu_{2},\vv_{1}\rangle|^{2}+\sum_{i=2}^{N}\lambda_{i}|\langle \uu_{2},\vv_{i}\rangle|^{2}
\le (\lambda_{1}-\lambda_{2})|\langle \uu_{2},\vv_{1}\rangle|^{2}+\langle \vv_{2},A\vv_{2} \rangle.
\end{align}
Recall the event $\mathcal{E}_{1}$ defined by \eqref{eq: event1 cond1}, \eqref{eq: event1 cond2} and \eqref{eq: event1 cond3}. Note that $\mathcal{E}_{1}$ holds with overwhelming probability due to  Lemma \ref{lem: eig val location}, \ref{lem: top eig val location} and \ref{lem: delocalization}. We write 
\begin{align}
\vv_{1}=\alpha\uu_{2}+\beta\xx
\end{align}
where $\xx\in\text{span}(\uu_{1},\uu_{3},\cdots,\uu_{N})$ and $\lVert \xx\rVert=1$.
Since
\begin{align}
B\vv_{1}=A\vv_{1}+(B-A)\vv_{1}=\lambda_{1}\vv_{1}+(B-A)\vv_{1}
\end{align}
and also
\begin{align}
B\vv_{1}=\alpha \mu_{2} \uu_{2} + \beta B \xx,
\end{align}
it follows that
\begin{align}
\lambda_{1}\vv_{1} = \alpha \mu_{2} \uu_{2} + \beta B \xx + (A-B)\vv_{1}.
\end{align}
Then,
\begin{align}
\lambda_{1}\alpha=\lambda_{1}\langle \uu_{2},\vv_{1}\rangle = \langle \uu_{2},\lambda_{1}\vv_{1}\rangle
=\mu_{2}\alpha+\langle \uu_{2},(A-B)\vv_{1} \rangle.
\end{align}
Consequently, on the event $\mathcal{E}_{1}$, it follows that
\begin{align}
|(\lambda_{1}-\mu_{2})\alpha| = |\langle \uu_{2},(A-B)\vv_{1} \rangle|\le \frac{CL^{D}}{qN}. 
\end{align}
Note that $\lambda_{1}\sim \zeta q+(\zeta q)^{-1}$ and $\mu_{2}\le C$ on the event $\mathcal{E}_{1}$. Finally, on the event $\mathcal{E}_{1}$, we obtain
\begin{align}\label{eq: size of alpha}
|\alpha|\le\frac{CL^{D}}{q^{2}N},
\end{align}
which implies 
\begin{align}\label{eq: compare lambda_2 mu_2}
\langle \uu_{2},A\uu_{2}\rangle \le \frac{CL^{D}}{q^{3}N^{2}}+\langle \vv_{2},A\vv_{2} \rangle.
\end{align}
Similarly, we have
\begin{align}
\langle \vv_{2},B\vv_{2}\rangle \le \frac{CL^{D}}{q^{3}N^{2}}+\langle \uu_{2},B\uu_{2} \rangle.
\end{align}
As a result, it follows that on the event $\mathcal{E}_{1}$
\begin{align}
\langle \uu_{2},(A-B)\uu_{2} \rangle - \frac{CL^{D}}{q^{3}N^{2}}
\le \lambda_{2}-\mu_{2} 
\le \langle \vv_{2},(A-B)\vv_{2}\rangle + \frac{CL^{D}}{q^{3}N^{2}}.
\end{align}
Using the same argument, we observe that on the event $\mathcal{E}_{1}$
\begin{align}
\langle \uu_{2}^{[k]},(A^{[k]}-B^{[k]})\uu_{2}^{[k]} \rangle - \frac{CL^{D}}{q^{3}N^{2}}
\le \lambda_{2}^{[k]}-\mu_{2}^{[k]}
\le \langle \vv_{2}^{[k]},(A^{[k]}-B^{[k]})\vv_{2}^{[k]}\rangle + \frac{CL^{D}}{q^{3}N^{2}}.
\end{align}

\subsection{Proof of Lemma \ref{lem: eigen vec small perturbation}}
Let $\mu^{(ij)}_{1}\ge\cdots\ge\mu^{(ij)}_{N}$ be the ordered eigenvalues of $B_{(ij)}$ and, let $\uu_{1}^{(ij)},\cdots,\uu_{N}^{(ij)}$ be the associated unit eigenvectors of $B_{(ij)}$. According to \eqref{eq: compare lambda_2 mu_2} and Lemma \ref{lem: delocalization}, we have with overwhelming probability
\begin{align}
\lambda_{2} & \ge \langle \uu_{2}^{(ij)},A\uu_{2}^{(ij)}\rangle - \frac{L^{D}}{q^{3}N^{2}} \\
& = \mu^{(ij)}_{2}+\langle\uu_{2}^{(ij)},(A-B_{(ij)})\uu_{2}^{(ij)}\rangle - \frac{L^{D}}{q^{3}N^{2}} \\
&\ge \mu^{(ij)}_{2}-2(|a_{ij}|+|a_{ij}''|)\lVert \uu_{2}^{(ij)} \rVert_{\infty}^{2} - \frac{L^{D}}{q^{3}N^{2}} \\
&\ge \mu^{(ij)}_{2}-\frac{L^{D}}{qN} 
\end{align}
Reversing the role of $A$ and $B^{(ij)}$, we also have with overwhelming probability
\begin{align}
\mu^{(ij)}_{2} \ge \lambda_{2}-\frac{L^{D}}{qN} 
\end{align}
Thus, it follows that with overwhelming probability
\begin{align}\label{eq: eigvenvalue perturbation}
\max_{1\le i\le j\le N} |\lambda_{2}-\mu^{(ij)}_{2}| \le \frac{L^{D}}{qN}.
\end{align}
Applying Lemma \ref{lem: tail bound for gaps}, we have for any $\rho>0$ 
\begin{align}\label{eq: gap prob}
\lambda_{2}-\lambda_{3} > c N^{-1-\rho}
\end{align}
with probability $1-O(N^{-\rho}\log{N})$. Let $N':=\lfloor N^{\theta}\rfloor$ with $0<\theta<1$. We write
\begin{align}
\uu_{2}^{(ij)}=\alpha \vv_{1} + \beta \vv_{2} + \gamma_{x} \xx + \gamma_{y} \yy
\end{align}
where $\xx\in\text{span}(\vv_{3},\cdots,\vv_{N'})$, $\yy\in\text{span}(\vv_{N'+1},\cdots,\vv_{N})$ and $\lVert \xx \rVert = \lVert \yy \rVert = 1$.
Since
\begin{align}
A\uu_{2}^{(ij)} = \alpha\lambda_{1}\vv_{1}+\beta\lambda_{2}\vv_{2}+\gamma_{x}A\xx+\gamma_{y}A\yy
\end{align}
and
\begin{align}
A\uu_{2}^{(ij)} = \lambda_{2}\uu_{2}^{(ij)}+(\mu^{(ij)}_{2}-\lambda_{2})\uu_{2}^{(ij)}+(A-B)\uu_{2}^{(ij)},
\end{align}
we can observe
\begin{align}
\lambda_{2}\uu_{2}^{(ij)} = \alpha\lambda_{1}\vv_{1}+\beta\lambda_{2}\vv_{2}+\gamma_{x}A\xx+\gamma_{y}A\yy +(\lambda_{2}-\mu^{(ij)}_{2})\uu_{2}^{(ij)}+(B-A)\uu_{2}^{(ij)}.
\end{align}
Next, it follows that with probability $1-O(N^{-\rho}\log{N})$
\begin{align}
\lambda_{2}\gamma_{x} &= \lambda_{2}\langle \xx,\uu_{2}^{(ij)}\rangle \nn\\
&= \langle \xx,\lambda_{2}\uu_{2}^{(ij)}\rangle \nn\\
& = \gamma_{x}\langle \xx, A\xx\rangle + (\lambda_{2}-\mu^{(ij)}_{2})\gamma_{x}+\langle \xx, (B-A)\uu_{2}^{(ij)}\rangle \nn\\
& < (\lambda_{2}- c N^{-1-\rho})\gamma_{x} + \frac{L^{D}}{qN} + \lVert \xx \rVert_{\infty} \cdot \frac{C}{q}\cdot\frac{L^{D}}{\sqrt{N}}
\end{align}
where we use \eqref{eq: eigvenvalue perturbation}, \eqref{eq: gap prob} and Lemma \ref{lem: delocalization}. Note that
\begin{align}
\lVert \xx \rVert_{\infty}\le \sum_{m=3}^{N'}|c_{m}|\lVert \vv_{m}\rVert_{\infty} \le \frac{L^{D}}{\sqrt{N}}\sum_{m=3}^{N'}|c_{m}|\le\frac{L^{D}\sqrt{N'}}{\sqrt{N}}.
\end{align}
Then, we get with probability $1-O(N^{-\rho}\log{N})$
\begin{align}
|\gamma_{x}|\le \frac{L^{D}N^{\rho}\sqrt{N'}}{q} \le \frac{L^{D}N^{\rho+\theta/2}}{q}.
\end{align}
According to Lemma \ref{lem: eig val location}, the following inequality holds with overwhelming probability.
\begin{align}\label{eq: eignvalue large gap}
\lambda_{2}-c(N')^{2/3}N^{-2/3}\ge \lambda_{N'}
\end{align}
With overwhelming probability, it follows that 
\begin{align}
\lambda_{2}\gamma_{y} &= \lambda_{2}\langle \yy,\uu_{2}^{(ij)}\rangle \nn\\
&= \langle \yy,\lambda_{2}\uu_{2}^{(ij)}\rangle \nn\\
& = \gamma_{y}\langle \yy, A\yy\rangle + (\lambda_{2}-\mu^{(ij)}_{2})\gamma_{y}+\langle \yy, (B-A)\uu_{2}^{(ij)}\rangle \nn\\
& \le (\lambda_{2}-c(N')^{2/3}N^{-2/3})\gamma_{y}+\frac{L^{D}}{qN}+\frac{L^{D}}{q\sqrt{N}}.
\end{align}
The above inequality implies that
\begin{align}
|\gamma_{y}|\le \frac{L^{D}(N')^{-2/3}N^{1/6}}{q}\le \frac{L^{D}N^{1/6-2\theta/3}}{q}.
\end{align}
Recall $|\alpha|\le\frac{CL^{D}}{q^{2}N}$ (see \eqref{eq: size of alpha} in the proof of Lemma \ref{lem: small perturbation variation}). Since $|\beta|=\sqrt{1-\alpha^{2}-\gamma_{x}^{2}-\gamma_{y}^{2}}\ge 1-|\alpha|-|\gamma_{x}|-|\gamma_{y}|$ and $\lVert \yy \rVert_{\infty}\le \lVert\yy\rVert_{2}\le 1$, by setting $s:=\beta/|\beta|$, we obtain with probability $1-O(N^{-\rho}\log{N})$
\begin{align}
\lVert s\vv_{2}-\uu_{2}^{(ij)}\rVert_{\infty}&\le |\alpha|\lVert\vv_{1}\rVert_{\infty}+(1-|\beta|)\lVert\vv_{2}\rVert_{\infty}+|\gamma_{x}|\lVert \xx\rVert_{\infty}+|\gamma_{y}| \nn\\
&\le |\alpha|\lVert\vv_{1}\rVert_{\infty}+(|\alpha|+|\gamma_{x}|+|\gamma_{y}|)\lVert\vv_{2}\rVert_{\infty}+ |\gamma_{x}|\lVert \xx\rVert_{\infty}+|\gamma_{y}| \nn\\
&\le \frac{L^{D}}{q^{2}N^{3/2}} + \left(\frac{L^{D}}{q^{2}N}+\frac{L^{D}N^{\rho+\theta/2}}{q}+\frac{L^{D}N^{1/6-2\theta/3}}{q}\right)\cdot\frac{L^{D}}{\sqrt{N}}
\nn\\
&\quad + \frac{L^{D}N^{\rho+\theta/2}}{q} \cdot \frac{L^{D}\sqrt{N'}}{\sqrt{N}} + \frac{L^{D}N^{1/6-2\theta/3}}{q}.
\end{align}
To get the desired result, it is enough to show that for some $\delta>0$
\begin{align}
\max\left(\frac{L^{D}N^{\rho+\theta-1/2}}{q}, \frac{L^{D}N^{1/6-2\theta/3}}{q}\right) \le N^{-1/2-\delta}.
\end{align}
with $q^{2}N^{\rho}\gg N$. Recall $q=N^{b}$. We should find appropriate ranges of $q$, $\rho$ and $\theta$ satisfying
\begin{align}\label{eq: condition for q}
\begin{cases}
\rho+\theta<b, \\
\frac{1}{6}-\frac{2}{3}\theta-b<-\frac{1}{2}, \\
2b+\rho>1.
\end{cases}
\end{align}
If $b>\frac{4}{9}$, 
then we can find $\rho>0$ and $0<\theta<1$ satisfying the above condition \eqref{eq: condition for q}. 

\subsection{Proof of Lemma \ref{lem: expectation estimate}}
We split the expectation into two parts.
\begin{align}
\E\left[ Z_{st}Z_{st}^{[k]}v_{s}v_{t}v_{s}^{[k]}v_{t}^{[k]}\indic_{\mathcal{E}} \right]=
\E\left[ Z_{st}Z_{st}^{[k]}v_{s}v_{t}v_{s}^{[k]}v_{t}^{[k]} \right]
-\E\left[ Z_{st}Z_{st}^{[k]}v_{s}v_{t}v_{s}^{[k]}v_{t}^{[k]}\indic_{\mathcal{E}^{c}} \right]
\end{align}
We consider the upper bound of the second part.
\begin{align}
\E\left[ Z_{st}Z_{st}^{[k]}v_{s}v_{t}v_{s}^{[k]}v_{t}^{[k]}\indic_{\mathcal{E}^{c}} \right]
=\E\left[ Z_{st}Z_{st}^{[k]}v_{s}v_{t}v_{s}^{[k]}v_{t}^{[k]}\indic_{\mathcal{E}_{1}\cap\mathcal{E}_{2}^{c}} \right]+\E\left[ Z_{st}Z_{st}^{[k]}v_{s}v_{t}v_{s}^{[k]}v_{t}^{[k]}\indic_{\mathcal{E}_{1}^{c}} \right]
\end{align}
On the event $\mathcal{E}_{1}$, we bound $v_{s}v_{t}v_{s}^{[k]}v_{t}^{[k]}$ using the delocalization. Since $|Z_{st}Z_{st}^{[k]}|=O(q^{-2})$, and $\prob(\mathcal{E}_{2}^{c})=O(N^{-\rho}\log{N})$ with $q^{2}N^{\rho}\gg N$ by Lemma \ref{lem: eigen vec small perturbation}, we observe that
\begin{align}\label{eq: kapppa estimate 1}
\E\left[ Z_{st}Z_{st}^{[k]}v_{s}v_{t}v_{s}^{[k]}v_{t}^{[k]}\indic_{\mathcal{E}_{1}\cap\mathcal{E}_{2}^{c}} \right]\le
\E\left[\big|Z_{st}Z_{st}^{[k]}\big|\indic_{\mathcal{E}_{1}\cap\mathcal{E}_{2}^{c}}\right]\cdot\frac{L^{D}}{N^{2}}\le \frac{L^{D}}{q^{2}N^{2+\rho}}=o(N^{-3}).
\end{align}
Since $\vv_{2}$ and $\vv_{2}^{[k]}$ are unit vectors,
\begin{align}
\E\left[ Z_{st}Z_{st}^{[k]}v_{s}v_{t}v_{s}^{[k]}v_{t}^{[k]}\indic_{\mathcal{E}_{1}^{c}} \right] = o(N^{-3}).
\end{align}
It turns out that
\begin{align}\label{eq: remove the remainder 1}
\E\left[ Z_{st}Z_{st}^{[k]}v_{s}v_{t}v_{s}^{[k]}v_{t}^{[k]}\indic_{\mathcal{E}} \right]=\E\left[ Z_{st}Z_{st}^{[k]}v_{s}v_{t}v_{s}^{[k]}v_{t}^{[k]} \right]+o(N^{-3}).
\end{align}
To get \eqref{eq: expectation estimate 1}, it is enough to show the following lemma. From now on, we assume $k=N^{\tau}$ with $\tau<2$, so that $kL^{D}\ll N^{2}$ for any $D>0$. We emphasize that this additional assumption on $k$ does not harm the generality due to \eqref{eq: monotonicity} in Lemma \ref{lem: superconcentration}. 
\begin{lem}\label{lem: expectation computation}
	Suppose $k=N^{\tau}$ with $\tau<2$.
	\begin{align}
	\E\left[ Z_{st}Z_{st}^{[k]}v_{s}v_{t}v_{s}^{[k]}v_{t}^{[k]} \right]=\frac{8}{N^{2}(N+1)}\E\left[ \sum_{1\le i,j\le N}v_{i}v_{j}v_{i}^{[k]}v_{j}^{[k]} \right]+o(N^{-3})
	\end{align}
\end{lem}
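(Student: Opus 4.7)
The plan is to integrate out the auxiliary variable $a_{st}''$ first (it enters $Z_{st},Z_{st}^{[k]}$ but not the eigenvectors $\vv_{2},\vv_{2}^{[k]}$), then split according to whether $(st)\in S_{k}$ and handle each piece by combining delocalization with the leave-one-out comparison from Lemma \ref{lem: eigen vec small perturbation}. Since $\vv_{2}$ and $\vv_{2}^{[k]}$ depend only on $A$ and $A^{[k]}$, both are independent of $a_{st}''$; using $\var(a_{st})=1/N$ and the i.i.d.\ property of $a_{st},a_{st}',a_{st}''$, a direct computation yields
\begin{align*}
\E_{a_{st}''}\bigl[Z_{st}Z_{st}^{[k]}\bigr]
=(1+\indic(s\ne t))^{2}\Bigl[\tfrac{1}{N}+h_{st}^{2}\indic((st)\notin S_{k})+h_{st}h_{st}'\indic((st)\in S_{k})\Bigr].
\end{align*}
This decomposes the target expectation as $T_{1}+T_{2}+T_{3}$ corresponding to the three summands.

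The deterministic piece $T_{1}$ is evaluated by averaging over the uniform pair $(st)\in\{(ij):i\le j\}$ and separating diagonal from off-diagonal contributions; delocalization (Lemma \ref{lem: delocalization}) absorbs the diagonal into an $O(L^{C}N^{-4})$ error, giving $T_{1}=\tfrac{4}{N^{2}(N+1)}\E[\sum_{i,j}v_{i}v_{j}v_{i}^{[k]}v_{j}^{[k]}]+o(N^{-3})$. For $T_{2}$, write $h_{st}^{2}=\tfrac{1}{N}+(h_{st}^{2}-\tfrac{1}{N})$. The constant part reproduces $T_{1}$ up to the missing indicator $\indic((st)\notin S_{k})$, whose removal costs $O(L^{4C}k/N^{5})=o(N^{-3})$ by delocalization and $\prob((st)\in S_{k})=2k/(N(N+1))$. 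For the mean-zero remainder, Lemma \ref{lem: eigen vec small perturbation} lets us replace (up to sign) $\vv_{2}$ by $\uu_{2}=\vv_{2}(B_{(st)})$ and $\vv_{2}^{[k]}$ by $\uu_{2}^{[k]}=\vv_{2}(B_{(st)}^{[k]})$, both of which are independent of $a_{st}$; the resulting expectation $\E[(h_{st}^{2}-\tfrac{1}{N})u_{s}u_{t}u_{s}^{[k]}u_{t}^{[k]}\indic((st)\notin S_{k})]$ vanishes by independence since $\E[h_{st}^{2}-\tfrac{1}{N}]=0$. The replacement error is $O(L^{3C}N^{-2-\delta})$ entrywise on the good event $\mathcal{F}$, which combined with $\E|h_{st}^{2}-\tfrac{1}{N}|\le 1/(q\sqrt{N})$ (from the moment bound \eqref{eq: moment condition}) is $o(N^{-3})$; on $\mathcal{F}^{c}$ the deterministic bound $|h_{st}^{2}-1/N|=O(1/q^{2})$ paired with delocalization gives $O(L^{4C}N^{-\rho-2}q^{-2}\log N)$, also $o(N^{-3})$ because $2b+\rho>1$. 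Therefore $T_{2}=T_{1}+o(N^{-3})$. Finally, for $T_{3}$, Cauchy–Schwarz using independence of $h_{st},h_{st}'$ and delocalization gives
\begin{align*}
|T_{3}|\le\sqrt{\E[(h_{st}h_{st}')^{2}\indic((st)\in S_{k})]\cdot\E[(v_{s}v_{t}v_{s}^{[k]}v_{t}^{[k]})^{2}]}=O(L^{4C}N^{\tau/2-4}),
\end{align*}
which is $o(N^{-3})$ since $\tau<2$. Summing the three contributions yields the claimed identity.

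\textbf{Main obstacle.} The delicate step is the bookkeeping for $T_{2}$: one must quantify the $\ell^{\infty}$-closeness of $\vv_{2}$ to its leave-one-out version $\uu_{2}$ precisely enough that both the good-event loss (governed by the second moment of $h_{st}^{2}$) and the bad-event loss (governed by the tail $\prob(\mathcal{F}^{c})=O(N^{-\rho}\log N)$ and delocalization) are $o(N^{-3})$. This is exactly what forces the sparsity regime $b\in(4/9,1/2)$ through the joint conditions $\delta>0$ and $2b+\rho>1$ of Lemma \ref{lem: eigen vec small perturbation}.
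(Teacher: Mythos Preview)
Your argument is correct and follows the same essential route as the paper: center the product $Z_{st}Z_{st}^{[k]}$, then decorrelate the fluctuating remainder from $v_{s}v_{t}v_{s}^{[k]}v_{t}^{[k]}$ by passing to leave-one-out eigenvectors via Lemma~\ref{lem: eigen vec small perturbation}. The organisation differs slightly: the paper first averages over the uniform pair $(st)$ and subtracts $\E[Z_{ij}Z_{ij}^{[k]}]$, then introduces a \emph{fourth} independent copy $A'''$ so that the surrogate eigenvector $\tilde{\uu}_{(ij)}$ is independent of all three entries $(a_{ij},a_{ij}',a_{ij}'')$; you instead integrate out $a_{st}''$ first, which lets you reuse $B_{(st)}$ (built with $a_{st}''$) since the only dependence to break is on $a_{st}$. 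This is a mild simplification and buys you one fewer auxiliary copy.

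One small quantitative slip in $T_{2}$: for the good-event replacement error you invoke $\E|h_{st}^{2}-\tfrac{1}{N}|\le 1/(q\sqrt{N})$ from the fourth-moment bound, but since $q<\sqrt{N}$ this is \emph{weaker} than the trivial $\E|h_{st}^{2}-\tfrac{1}{N}|\le \E h_{st}^{2}+\tfrac{1}{N}=\tfrac{2}{N}$, and it forces $b+\delta>\tfrac{1}{2}$, which is not guaranteed near $b=\tfrac{4}{9}$ by Lemma~\ref{lem: eigen vec small perturbation}. Using $\tfrac{2}{N}$ instead gives $O(L^{3C}N^{-3-\delta})=o(N^{-3})$ for any $\delta>0$, and the rest of your estimates (the $T_{3}$ Cauchy--Schwarz bound, the $\mathcal{F}^{c}$ contribution via $2b+\rho>1$, and the $O(L^{C}k/N^{5})$ cost of removing $\indic((st)\notin S_{k})$) go through exactly as you wrote.
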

We postpone the proof of Lemma \ref{lem: expectation computation} to the appendix. What remains is to show \eqref{eq: expectation estimate 2}.
\begin{align}
\E\left[ (\lambda_{2}-\mu_{2})(\lambda_{2}^{[k]}-\mu_{2}^{[k]})\indic_{\mathcal{E}^{c}} \right]
= \E\left[ (\lambda_{2}-\mu_{2})(\lambda_{2}^{[k]}-\mu_{2}^{[k]}) \indic_{\mathcal{E}_{1}\cap\mathcal{E}_{2}^{c}} \right] +  \E\left[ (\lambda_{2}-\mu_{2})(\lambda_{2}^{[k]}-\mu_{2}^{[k]})\indic_{\mathcal{E}_{1}^{c}} \right]
\end{align}
To control the first term, we recall \eqref{eq: event 2 effect} and \eqref{eq: kapppa estimate 1}. It follows that
\begin{align}
\E\left[ \big|(\lambda_{2}-\mu_{2})(\lambda_{2}^{[k]}-\mu_{2}^{[k]})\big| \indic_{\mathcal{E}_{1}\cap\mathcal{E}_{2}^{c}} \right]
\le \E\left[\big|Z_{st}Z_{st}^{[k]}\big|\indic_{\mathcal{E}_{1}\cap\mathcal{E}_{2}^{c}}\right]\cdot\frac{L^{D}}{N^{2}}+O\left(\frac{L^{D}}{q^{4}N^{3}}\right) = o(N^{-3}).
\end{align}
For the second term, it is enough to consider the Hilbert-Schmidt norm,
\begin{align}
|(\lambda_{2}-\mu_{2})(\lambda_{2}^{[k]}-\mu_{2}^{[k]})|\le (\lVert A \rVert_{HS}+\lVert B \rVert_{HS})(\lVert A^{[k]} \rVert_{HS}+\lVert B^{[k]} \rVert_{HS}),
\end{align}
and the moment condition \eqref{eq: moment condition} because $\prob(\mathcal{E}_{1}^{c})$ decays very fast.

By the Cauchy–Schwarz, it follows that
\begin{align}
\E\left[ (\lambda_{2}-\mu_{2})(\lambda_{2}^{[k]}-\mu_{2}^{[k]})\indic_{\mathcal{E}_{1}^{c}} \right]=o(N^{-3}),
\end{align}
and we obtain \eqref{eq: expectation estimate 2}.

\section{Small resampling}\label{sec: thm2}

\subsection{Proof of Lemma \ref{lem: main lem for thm2}}

Lemma \ref{lem: main lem for thm2} is a consequence of the following two lemmas.

\begin{lem}\label{lem: lem13 in BLZ}
	Assume $q=N^{b}$ with $b>\frac{1}{3}$ and $k=N^{\tau}$ with $\tau < 2b+\frac{2}{3}$. For $C_{0}>0$ and $C_{1}>0$, the following event holds with overwhelming probability: for all $z=E+i\eta$ such that $|E-2|\le N^{-2/3}L^{C_{0}}$ and $\eta=N^{-2/3}L^{-C_{1}}$,
	\begin{align}
	\max_{1\le i,j\le N} N\eta|R^{[k]}(z)_{ij}-R(z)_{ij}|\le \frac{1}{L^{2}}
	\end{align}
\end{lem}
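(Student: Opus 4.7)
The strategy mirrors that of Lemma 13 in \cite{BLZ19}, replacing the Wigner local law by the entrywise and isotropic local semicircle laws for sparse matrices of \cite{EKYY13}. I would start from the resolvent identity
\begin{align*}
R^{[k]}(z)-R(z)=R(z)\,V\,R^{[k]}(z),\qquad V:=A-A^{[k]},
\end{align*}
which reduces the task to estimating the random sum
\begin{align*}
(RVR^{[k]})_{ij}=\sum_{\substack{(s,t)\in S_k\\s<t}}V_{st}\bigl(R_{is}R^{[k]}_{tj}+R_{it}R^{[k]}_{sj}\bigr)+\sum_{(s,s)\in S_k}V_{ss}R_{is}R^{[k]}_{sj}.
\end{align*}
The three inputs I would invoke are: (i) the entrywise local law $|R_{ij}(z)-m_{\text{sc}}(z)\delta_{ij}|\lesssim L^{D}\Psi(z)$, with $\Psi(z):=\sqrt{\mathrm{Im}\,m_{\text{sc}}(z)/(N\eta)}+(N\eta)^{-1}+q^{-1}\lesssim N^{-1/3}L^{C}$ in our edge regime, and the analogous statement for $R^{[k]}$; (ii) Ward's identity $\sum_t|R_{it}|^2=\eta^{-1}\mathrm{Im}\,R_{ii}$; (iii) the structure of $V$: each $V_{st}$ is mean zero with variance $2/N$ and $|V_{st}|\le 2\zeta/q$, and with overwhelming probability $V$ has at most $O(kq^{2}/N+L^{D})$ nonzero entries, yielding $\|V\|_{\text{op}}\le\|V\|_F\lesssim\sqrt{k/N}$.

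A naive deterministic bound from Cauchy--Schwarz and Ward gives $|(RVR^{[k]})_{ij}|\lesssim(\mathrm{Im}\,m_{\text{sc}}/\eta)\|V\|_{\text{op}}\lesssim N^{1/3}L^{C}\sqrt{k/N}$, which after multiplication by $N\eta$ produces $N^{\tau/2-1/6}L^{C-C_{1}}$---too weak whenever $\tau$ approaches $2b+2/3$. The plan is therefore to treat the expression as a (nearly) mean-zero sum of independent random variables indexed by $(s,t)\in S_k$ and apply a Bernstein-type concentration inequality. Conditional on the coefficients, the variance is bounded by
\begin{align*}
\frac{2}{N}\sum_{(s,t)\in S_k}\bigl|R_{is}R^{[k]}_{tj}+R_{it}R^{[k]}_{sj}\bigr|^{2}\lesssim \frac{k\Psi^{4}}{N}+(\text{atypical terms}),
\end{align*}
where the $O(k/N+L^{D})$ pairs hitting $i$ or $j$ are estimated separately using Ward. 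The resulting fluctuation is of order $\sqrt{k/N}\,\Psi^{2}\lesssim \sqrt{k/N}\cdot N^{-2/3}L^{C}$, giving $N\eta\cdot|(RVR^{[k]})_{ij}|\lesssim N^{(\tau-5/3)/2}L^{C-C_{1}}$. Since $\tau<2b+\tfrac{2}{3}<\tfrac{5}{3}$ (because $b<\tfrac{1}{2}$), the prefactor is a negative power of $N$, so the bound is $o(L^{-2})$ for large~$N$.

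The main obstacle is that the coefficients $R_{is},R^{[k]}_{tj}$ are not independent of $V_{st}$, since $R$ depends on $a_{st}$ and $R^{[k]}$ on $a'_{st}$. To bypass this I would pass to the minor resolvents $G^{(\{s,t\})}$ and $G^{[k](\{s,t\})}$ obtained by deleting rows and columns $\{s,t\}$ from $A$ and $A^{[k]}$, and use the Schur complement formula to express $R_{is}$ and $R^{[k]}_{tj}$ in terms of these minors plus errors of order $L^{D}\Psi$. The minor resolvents are independent of $(a_{st},a'_{st})$ and satisfy the same local law up to negligibly worsened constants, so conditionally on the minors and on $S_k$ the family $\{V_{st}\}_{(s,t)\in S_k}$ is genuinely independent and mean zero, making a Bernstein-type inequality directly applicable. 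Finally, a union bound over a polynomial-in-$N$ grid of $z$-values, combined with the Lipschitz estimate $\|\partial_z R\|_{\max}\le\eta^{-2}$, extends the pointwise bound uniformly to all $z$ in the stated range with overwhelming probability.
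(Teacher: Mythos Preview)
Your decoupling step has a genuine gap. The difficulty is not the dependence of a single coefficient $R_{is}$ on the single entry $a_{st}$---which passing to a minor or zeroed resolvent would indeed cure---but the \emph{joint} dependence of all the coefficients on all $k$ resampled entries at once. The resolvent $R$ depends on every $a_{st}$ with $(s,t)\in S_k$, and $R^{[k]}$ on every $a'_{st}$. Replacing the coefficients in the $(s,t)$ term by minor quantities removes the dependence on that one pair only; the resulting coefficient still depends on all $a_{s't'},a'_{s't'}$ with $(s',t')\in S_k\setminus\{(s,t)\}$, and hence is correlated with $V_{s't'}$. Conditioning ``on the minors'' for all $k$ pairs simultaneously determines essentially all of $A$ and $A^{[k]}$, so nothing random remains. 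Thus after your substitution the sum is still not a sum of conditionally independent mean-zero terms, and a Bernstein-type inequality is not available.

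The paper handles this by telescoping one entry at a time: with $R^{[k]}-R=\sum_{t=0}^{k-1}(R^{[t+1]}-R^{[t]})$ and the filtration $\mathcal F_t=\sigma\bigl(A,S_k,(a'_{i_sj_s})_{s\le t}\bigr)$, only the single entry $(i_{t+1},j_{t+1})$ changes at step $t$. Passing to the matrix $A_0^{[t+1]}$ with that entry zeroed yields a resolvent which is simultaneously $\mathcal F_t$-measurable and independent of both $a_{i_{t+1}j_{t+1}}$ and $a'_{i_{t+1}j_{t+1}}$; a third-order resolvent expansion around $R_0^{[t+1]}$ then isolates a bounded martingale difference $|Z_{t+1}|\le\beta_{t+1}$, with $\beta_t\sim L^CN^{-2/3}q^{-1}$ for $t\notin T_{ij}$, plus drift terms $r_{ij},s_{ij},\alpha_{ij}$. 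Azuma--Hoeffding controls the martingale, while the drifts require a separate \emph{backward} filtration $\mathcal F'_t$ since the coefficients $r_{ij}^{[t]},s_{ij}^{[t]}$ are not $\mathcal F_{t-1}$-measurable. The bound $(\sum_t\beta_t^2)^{1/2}\lesssim L^CN^{-2/3}q^{-1}\sqrt k$ is exactly what produces the threshold $\tau<2b+\tfrac23$; it carries the factor $q^{-1}$ through the sup-norm $|a_{st}|\le Cq^{-1}$ entering Azuma, whereas your variance heuristic uses only $\mathrm{Var}(V_{st})=2/N$ and would, if the independence held, yield the $q$-free exponent $N^{(\tau-5/3)/2}$---a sign that the shortcut is bypassing the place where sparsity actually matters.
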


\begin{lem}\label{lem: lem14 in BLZ}
	Let $q=N^{b}$ with $b>\frac{1}{3}$ and $\tau>0$. Assume $\tau < 2b+\frac{2}{3}$. For $\eps>0$, there exist $C_{1}>0$ such that the following event holds for all $N$ large enough with probability at least $1-\eps$: for all $k\le N^{\tau}$, we have, with $z=\lambda_{2}+i\eta$ and $\eta=N^{-2/3}L^{-C_{1}}$,
	\begin{align}\label{eq: difference between resolvent and vector components}
	\max_{1\le i,j \le N}N|\eta\text{Im}R(z)_{ij}-v_{2,i}v_{2,j}|\le L^{-2}\quad\text{and}\quad
	\max_{1\le i,j \le N}N|\eta\text{Im}R^{[k]}(z)_{ij}-v_{2,i}^{[k]}v_{2,j}^{[k]}|\le L^{-2}.
	\end{align}
\end{lem}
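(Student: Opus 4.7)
Both inequalities in \eqref{eq: difference between resolvent and vector components} follow from a spectral decomposition at $z = \lambda_{2}+i\eta$; I describe the argument for the first in detail and indicate the modification needed for the second. Using $R(z)=\sum_{\ell}\vv_{\ell}\vv_{\ell}^{T}/(\lambda_{\ell}-z)$ and extracting the $\ell=2$ term, which alone contributes $v_{2,i}v_{2,j}$ to $\eta\,\text{Im}\,R(z)_{ij}$, one obtains the exact identity
\begin{align*}
\eta\,\text{Im}\,R(z)_{ij} - v_{2,i}v_{2,j} \;=\; \sum_{\ell \neq 2}\frac{\eta^{2}\,v_{\ell,i}\,v_{\ell,j}}{(\lambda_{\ell}-\lambda_{2})^{2}+\eta^{2}}.
\end{align*}
The goal is to show the right-hand side is $O(L^{-2}/N)$ uniformly in $i,j$ on an event of probability at least $1-\eps$.

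The contributions are split into three regimes. For $\ell=1$, the edge-bulk separation $\lambda_{1}-\lambda_{2}\gtrsim q$ (Lemma \ref{lem: top eig val location}) combined with delocalization $|v_{1,i}v_{1,j}|\leq L^{2C}/N$ (Lemma \ref{lem: delocalization}) gives an $\ell=1$ contribution of size $L^{2C}\eta^{2}/(Nq^{2})=o(L^{-2}/N)$ for $C_{1}$ large. For $\ell$ beyond a threshold $K:=L^{C_{2}}$, eigenvalue rigidity (Lemma \ref{lem: eig val location}) together with the classical spacing $\gamma_{2}-\gamma_{\ell}\asymp(\ell/N)^{2/3}$ yield $|\lambda_{\ell}-\lambda_{2}|\gtrsim(\ell/N)^{2/3}$, and the tail sum contributes $L^{2C-C_{2}/3-2C_{1}}/N$ via $\sum_{\ell>K}\ell^{-4/3}\lesssim K^{-1/3}$, which is $\ll L^{-2}/N$ once $C_{1},C_{2}$ are large. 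The delicate regime is $3\leq\ell\leq K$: each term must be suppressed by $(\eta/|\lambda_{\ell}-\lambda_{2}|)^{2}$, and to handle the $O(L^{C_{2}})$ terms we need an edge level-repulsion estimate $|\lambda_{\ell}-\lambda_{2}|\gtrsim N^{-2/3}L^{-C_{1}/2}$ which is substantially stronger than the adjacent-gap bound of Lemma \ref{lem: tail bound for gaps}; this is purchased at the cost of an event of probability at most $\eps/2$, so that $C_{1}$ ends up depending on $\eps$.

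For the second bound, the same identity is applied with the spectral data of $A^{[k]}$. The $\ell=2$ term becomes $\eta^{2}v_{2,i}^{[k]}v_{2,j}^{[k]}/((\lambda_{2}^{[k]}-\lambda_{2})^{2}+\eta^{2})$, which equals $v_{2,i}^{[k]}v_{2,j}^{[k]}$ up to a negligible multiplicative factor provided $|\lambda_{2}^{[k]}-\lambda_{2}|=o(\eta)$. This edge-eigenvalue stability under resampling is supplied by Lemma \ref{lem: lem13 in BLZ}: the uniform resolvent closeness $|R^{[k]}(z)_{ij}-R(z)_{ij}|\leq L^{-2}/(N\eta)$ forces the peaks of $\text{Im}\,R$ and $\text{Im}\,R^{[k]}$ to line up at scale $o(\eta)$, and $\lambda_{2}^{[k]}$ is the only eigenvalue of $A^{[k]}$ that can sit within $O(\eta)$ of $\lambda_{2}$ since the bulk is well-separated by rigidity. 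The remaining $\ell\neq 2$ contributions are then controlled exactly as in the three-regime split above, with $A$ replaced by $A^{[k]}$. The main obstacle of the plan is the edge regime $3\leq\ell\leq K$: securing the quantitative edge-spacing estimate for sparse \ER{} beyond what Lemma \ref{lem: tail bound for gaps} offers is what forces the sparsity threshold $b>1/3$ and ties the allowed value of $C_{1}$ to $\eps$.
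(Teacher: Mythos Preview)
Your proposal is correct and follows essentially the same route as the paper: spectral decomposition of $\eta\,\text{Im}\,R(z)_{ij}$, the same three-regime split ($\ell=1$ via Lemma~\ref{lem: top eig val location}, large $\ell$ via rigidity, and an intermediate block of $L^{O(1)}$ terms), and eigenvalue stability under resampling via the resolvent closeness of Lemma~\ref{lem: lem13 in BLZ}. Two small calibration points worth noting. First, what you flag as ``the main obstacle'' in the intermediate regime $3\le\ell\le K$ is handled in the paper not by a new level-repulsion estimate but simply by the edge-gap statement $\prob(\lambda_{2}-\lambda_{3}>\delta N^{-2/3})\ge 1-\eps$ from \cite[Theorem~2.7]{EKYY12}; since eigenvalues are ordered this gives $|\lambda_{\ell}-\lambda_{2}|\ge\delta N^{-2/3}$ for every $\ell\ge 3$ in one shot, and the dependence of $\delta$ on $\eps$ is exactly what makes $C_{1}$ depend on $\eps$. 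Second, the eigenvalue-stability step you sketch (resolvent closeness forces $|\lambda_{2}-\lambda_{2}^{[k]}|=o(\eta)$) is isolated in the paper as Proposition~\ref{lem: lem12 in BLZ}, proved precisely by comparing the size of $\text{Im}\,R$ and $\text{Im}\,R^{[k]}$ at $z=\lambda_{2}+i\eta$ through Lemma~\ref{lem: lem9 in BLZ} and Lemma~\ref{lem: lem13 in BLZ}; your inline argument is the same mechanism.
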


First of all, Choose $C_{1}$ and $C_{2}$ as in Lemma \ref{lem: lem13 in BLZ} and \ref{lem: lem14 in BLZ}. Note that if $q$ and $k$ satisfies the assumption of Lemma \ref{lem: lem13 in BLZ}, we have $k\le N^{5/3}L^{-D}$ for all $D>0$. According to Lemma \ref{lem: eig val location}, there exist $C_{0}$ such that $|\lambda_{2}-2|\le N^{-2/3}L^{C_{0}}$ for large $N$ with overwhelming probability. Thus, we can apply Lemma \ref{lem: lem13 in BLZ} with $z=\lambda_{2}+i\eta$ and $\eta=N^{-2/3}L^{-C_{1}}$. Since
\begin{align}
\lvert v_{2,i}v_{2,j} - v_{2,i}^{[k]}v_{2,j}^{[k]} \rvert 
&\le |v_{2,i}v_{2,j}-\eta\text{Im}R(z)_{ij}| + \eta|\text{Im}R(z)_{ij}-\text{Im}R^{[k]}(z)_{ij}| \nn\\
&\quad + |\eta\text{Im}R^{[k]}(z)_{ij}-v_{2,i}^{[k]}v_{2,j}^{[k]}|,
\end{align}
the desired result follows from Lemma \ref{lem: lem13 in BLZ} and \ref{lem: lem14 in BLZ}. The proof of these two lemmas is in the appendix.\\
\indent In order to show Lemma \ref{lem: lem14 in BLZ}, we need to estimate the effect of the resampling to $\lambda_{2}$. The following proposition provide us the upper bound of the difference between $\lambda_{2}$ and $\lambda_{2}^{[k]}$.

\begin{prop}\label{lem: lem12 in BLZ}
	Let $q=N^{b}$ with $b>\frac{1}{3}$ and $\tau>0$. Assume $\tau < 2b+\frac{2}{3}$. For $C_{3}>0$, the following holds with overwhelming probability: for all $N$ large enough,
	\begin{align}
	\max_{k\le N^{\tau}} |\lambda_{2}-\lambda_{2}^{[k]}|\le N^{-2/3}L^{-C_{3}}.
	\end{align}
	\begin{proof}
		If $\lambda_{2}=\lambda_{2}^{[k]}$, we are done. Thus, suppose $\lambda_{2}^{[k]}<\lambda_{2}$. Choose $C_{0}>0$ as in Lemma \ref{lem: lem9 in BLZ} and set 
		\begin{align}
		\eta:=N^{-2/3}L^{-\frac{C_{0}}{2}-C_{3}}.
		\end{align}
		According to Lemma \ref{lem: lem9 in BLZ}, we can find $1\le i\le N$ such that
		\begin{align}
		\frac{1}{2\eta^{2}} \le N\eta^{-1}\text{Im}R(\lambda_{2}+i\eta)_{ii}
		\end{align} 
		By Lemma \ref{lem: lem9 in BLZ}, with over whelming probability, we have $|\lambda_{2}-2|\le L^{C_{0}}N^{-2/3}$ and
		\begin{align}
		N\eta^{-1}\text{Im}R^{[k]}(\lambda_{2}+i\eta)_{ii}\le L^{C_{0}}\left(\min_{1\le j\le N}\left|\lambda_{2}-\lambda_{j}^{[k]}\right|\right)^{-2}.
		\end{align}
		Note that $\lambda_{1}^{[k]}$ is close to $f+\frac{1}{f}$ with overwhelming probability, which implies
		\begin{align}
		\min_{1\le j\le N}\left|\lambda_{2}-\lambda_{j}^{[k]}\right| = \left|\lambda_{2}-\lambda_{2}^{[k]}\right|.
		\end{align}
		Since $k\le N^{\tau}$ with $\tau < 2b+\frac{2}{3}$, we can apply Lemma \ref{lem: lem13 in BLZ}. Then, with overwhelming probability,
		\begin{align}
		N\eta^{-1}\text{Im}R^{[k]}(\lambda_{2}+i\eta)_{ii} &\ge N\eta^{-1}\left(\text{Im}R(\lambda_{2}+i\eta)_{ii} 
		- \left|\text{Im}R^{[k]}(\lambda_{2}+i\eta)_{ii} -  \text{Im}R(\lambda_{2}+i\eta)_{ii}\right|\right) \\
		&\ge \frac{1}{2\eta^{2}} - \frac{1}{L^{2}\eta^{2}} \ge \frac{1}{4\eta^{2}}.
		\end{align}
		As a result, we obtain with overwhelming probability
		\begin{align}
		\frac{1}{4\eta^{2}} \le L^{C_{0}}\left|\lambda_{2}-\lambda_{2}^{[k]}\right|^{-2}.
		\end{align}
		In other words, with overwhelming probability,
		\begin{align}
		\left|\lambda_{2}-\lambda_{2}^{[k]}\right| \le 2N^{-2/3}L^{-C_{3}}.
		\end{align}
	\end{proof}
\end{prop}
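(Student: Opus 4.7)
The plan is to compare $\lambda_2$ with $\lambda_2^{[k]}$ through the resolvents $R$ and $R^{[k]}$ evaluated at $z = \lambda_2 + i\eta$, where $\eta = N^{-2/3}L^{-C_4}$ is chosen with $C_4$ sufficiently large relative to $C_3$. The pivot is the spectral expansion
\[
\mathrm{Im}\, R(E+i\eta)_{ii} \;=\; \sum_{j=1}^N \frac{\eta\, v_{j,i}^2}{(\lambda_j - E)^2 + \eta^2},
\]
whose $j=2$ term at $E = \lambda_2$ equals $v_{2,i}^2/\eta$. By delocalization (Lemma \ref{lem: delocalization}) we can pick an index $i$ with $v_{2,i}^2 \gtrsim 1/N$ (say, $\lVert \vv_2 \rVert_2^2 = 1$ forces at least one coordinate of order $N^{-1}$), producing the lower bound
\[
N\eta^{-1}\,\mathrm{Im}\, R(\lambda_2 + i\eta)_{ii} \;\gtrsim\; \eta^{-2}.
\]
This is the sense in which the resolvent of $A$ ``detects'' $\lambda_2$ at the chosen scale.

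For the resampled matrix, write the same spectral expansion for $R^{[k]}$ and bound each eigenvector coordinate using delocalization, $(v_{j,i}^{[k]})^2 \le L^{C}/N$. The contribution of $\lambda_1^{[k]}$, which by Lemma \ref{lem: top eig val location} applied to $A^{[k]}$ sits near $\zeta q$, is at macroscopic distance from $\lambda_2$ (using Lemma \ref{lem: eig val location} to place $\lambda_2$ near the edge of the semicircle) and hence contributes $O(1)$. The bulk/edge contributions for $j\ge 3$ can be summed using eigenvalue rigidity from Lemma \ref{lem: eig val location} to produce a harmless logarithmic factor. What survives is essentially the $j=2$ piece, giving an upper bound of the form
\[
N\eta^{-1}\,\mathrm{Im}\, R^{[k]}(\lambda_2 + i\eta)_{ii} \;\lesssim\; \frac{L^{C}}{|\lambda_2 - \lambda_2^{[k]}|^2 + \eta^2}.
\]

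To couple the two resolvents I invoke Lemma \ref{lem: lem13 in BLZ}: under $\tau < 2b + 2/3$ one has $N\eta\,|R^{[k]}(z)_{ii} - R(z)_{ii}| \le L^{-2}$ with overwhelming probability, so the lower bound transfers from $\mathrm{Im}\, R$ to $\mathrm{Im}\, R^{[k]}$ up to a negligible additive correction, giving $N\eta^{-1}\,\mathrm{Im}\, R^{[k]}(\lambda_2 + i\eta)_{ii} \gtrsim \eta^{-2}/2$ as well. Combining with the upper bound yields $|\lambda_2 - \lambda_2^{[k]}|^2 \lesssim L^{C}\,\eta^2$, and solving for $|\lambda_2 - \lambda_2^{[k]}|$ and choosing $C_4$ large enough in the definition of $\eta$ gives the target bound $|\lambda_2 - \lambda_2^{[k]}| \le N^{-2/3} L^{-C_3}$. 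Note that it suffices to treat the case $\lambda_2^{[k]} < \lambda_2$ (otherwise the roles are symmetric).

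The main obstacle is the reduction of the spectral sum for $R^{[k]}$ to essentially its $j=2$ contribution. Separating $\lambda_1^{[k]}$ is easy because of the macroscopic gap, but the sum over $j \ge 3$ is sensitive and requires that $\lambda_j^{[k]}$ remains sufficiently close to the classical locations $\gamma_j$; this is exactly what forces the hypothesis $b > 1/3$ through Lemma \ref{lem: eig val location}. All the other ingredients (delocalization, resolvent stability under resampling) are in place, so the proof is essentially a synthesis of a standard resolvent-to-eigenvalue inversion with the newly proved Lemma \ref{lem: lem13 in BLZ}.
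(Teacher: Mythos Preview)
Your proposal is correct and follows essentially the same route as the paper: a lower bound on $N\eta^{-1}\mathrm{Im}\,R(\lambda_2+i\eta)_{ii}$ from a large coordinate of $\vv_2$, an upper bound on $N\eta^{-1}\mathrm{Im}\,R^{[k]}(\lambda_2+i\eta)_{ii}$ in terms of $\min_j|\lambda_2-\lambda_j^{[k]}|$ via delocalization and rigidity, the transfer between $R$ and $R^{[k]}$ via Lemma~\ref{lem: lem13 in BLZ}, and the identification $\min_{j\ge 2}|\lambda_2-\lambda_j^{[k]}|=|\lambda_2-\lambda_2^{[k]}|$ under the WLOG assumption $\lambda_2^{[k]}<\lambda_2$. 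The only cosmetic difference is that the paper packages the two spectral-sum bounds as Lemma~\ref{lem: lem9 in BLZ}, whereas you derive them inline; note that your phrase ``summed using eigenvalue rigidity'' for the near-edge $j\ge 3$ terms is slightly imprecise---those terms are controlled by the trivial bound $(\lambda_j^{[k]}-\lambda_2)^2\ge(\min_p|\lambda_p^{[k]}-\lambda_2|)^2$ and the fact that there are only $L^{O(1)}$ of them, with rigidity reserved for the far-from-edge tail---but this does not affect the validity of your stated upper bound.
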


\section*{Acknowledgment}
This work was supported by by the research grants NRF-2017R1A2B2001952 and NRF-2019R1A5A 1028324. We would also like to show our gratitude to Paul Jung, an associate professor at KAIST, for providing insight and expertise.

\begin{appendices}
\section{Proof of some lemmas}
\subsection{Proof of Lemma \ref{lem: superconcentration}}
According to \cite{Chatterjee05}, we have
\begin{align}
\var\big(f(x)\big) = \frac{1}{2}\sum_{i=1}^{n} \E\left[ \left( f(\mathcal{X}) - f(\mathcal{X}^{(i)}) \right) \left( f(\mathcal{X}^{[i-1]}) - f(\mathcal{X}^{[i]}) \right) \right].
\end{align}
Moreover, for a random permutation $\sigma$ which is uniformly distributed on $S_{n}$, it follows that
\begin{align}
\var\big(f(x)\big) = \frac{1}{2} \sum_{i=1}^{n} \E\left[ \left( f(\mathcal{X}) - f(\mathcal{X}^{\sigma(i)}) \right) \left( f(\mathcal{X}^{\sigma([i-1])}) - f(\mathcal{X}^{\sigma([i])}) \right) \right].
\end{align}
In \cite{BLZ19}, it is also shown that
\begin{multline}
\E\left[\left(f(\mathcal{X})-f(\mathcal{X}^{\sigma(i)})\right) \left(f(\mathcal{X}^{\sigma([i-1])})-f(\mathcal{X}^{\sigma([i])})\right) \right] \\
\ge  \E\left[\left(f(\mathcal{X})-f(\mathcal{X}^{\sigma(i+1)})\right) \left(f(\mathcal{X}^{\sigma([i])})-f(\mathcal{X}^{\sigma([i+1])})\right) \right] 
\end{multline}
for $i\in\{1,\cdots, n-1\}$ and
\begin{align}
\E\left[\left(f(\mathcal{X})-f(\mathcal{X}^{\sigma(n)})\right) \left(f(\mathcal{X}^{\sigma([n-1])})-f(\mathcal{X}^{\sigma([n])})\right) \right] \ge 0.
\end{align}
Consequently, we have for each $k\in\{1,\cdots, n-1\}$
\begin{align}
2\var\big(f(x)\big) & \ge \sum_{i=1}^{k} \E\left[ \left( f(\mathcal{X}) - f(\mathcal{X}^{\sigma(i)}) \right) \left( f(\mathcal{X}^{\sigma([i-1])}) - f(\mathcal{X}^{\sigma([i])}) \right) \right] \nn \\
& \ge k \; \E\left[ \left( f(\mathcal{X}) - f(\mathcal{X}^{\sigma(k)}) \right) \left( f(\mathcal{X}^{\sigma([k-1])}) - f(\mathcal{X}^{\sigma([k])}) \right) \right],
\end{align}
which implies
\begin{align}\label{lem: BLZ lem2}
\E\left[ \left( f(\mathcal{X}) - f(\mathcal{X}^{\sigma(k)}) \right) \left( f(\mathcal{X}^{\sigma([k-1])}) - f(\mathcal{X}^{\sigma([k])}) \right) \right] \le \frac{2\var\big(f(x)\big)}{k}.
\end{align}
We split $I_{i}$ into two parts.
\begin{align}
I_{i} &= \E\left[ \left(f(\mathcal{X})-f(\mathcal{X}^{(j)})\right) \left(f(\mathcal{X}^{\sigma([i-1])})-f(\mathcal{X}^{(j)\circ\sigma([i-1])})\right) \right] \nn\\
& = \E\left[ \left(f(\mathcal{X})-f(\mathcal{X}^{(j)})\right) \left(f(\mathcal{X}^{\sigma([i-1])})-f(\mathcal{X}^{(j)\circ\sigma([i-1])})\right)\; ; \; j\in\sigma([i-1]) \right] \nn \\
& \quad + \E\left[ \left(f(\mathcal{X})-f(\mathcal{X}^{(j)})\right) \left(f(\mathcal{X}^{\sigma([i-1])})-f(\mathcal{X}^{(j)\circ\sigma([i-1])})\right)\; ; \; j\notin\sigma([i-1]) \right]
\end{align}
By recalling \eqref{eq: decomposition of expec}, we have
\begin{multline}
\E\left[ \left(f(\mathcal{X})-f(\mathcal{X}^{(j)})\right) \left(f(\mathcal{X}^{\sigma([i-1])})-f(\mathcal{X}^{(j)\circ\sigma([i-1])})\right)\; ; \; j\notin\sigma([i-1]) \right] \nn \\
\quad = \E\left[ \left( f(\mathcal{X}) - f(\mathcal{X}^{\sigma(i)}) \right) \left( f(\mathcal{X}^{\sigma([i-1])}) - f(\mathcal{X}^{\sigma([i])}) \right) \right]  \left(\frac{n-(i-1)}{n}\right).
\end{multline}
Also, it follows that
\begin{align}
\E&\left[ \left(f(\mathcal{X})-f(\mathcal{X}^{(j)})\right) \left(f(\mathcal{X}^{\sigma([i-1])})-f(\mathcal{X}^{(j)\circ\sigma([i-1])})\right)\; ; \; j\in\sigma([i-1]) \right] \nn \\
&\;\; = \E\left[ \left(f(\mathcal{X})-f(\mathcal{X}^{(j)})\right) \left( f(\mathcal{X}^{\sigma([i-1])\backslash\{j\}})-f(\mathcal{X}^{(j)\circ\sigma([i-1])})\right)\; ; \; j\in\sigma([i-1]) \right] \nn\\
& \;\;\quad - \E\left[ \left(f(\mathcal{X})-f(\mathcal{X}^{(j)})\right) \left(f(\mathcal{X}^{\sigma([i-1])\backslash\{j\}})- f(\mathcal{X}^{\sigma([i-1])})\right)\; ; \; j\in\sigma([i-1]) \right] \nn\\
& \;\; = \E\left[ \left( f(\mathcal{X}) - f(\mathcal{X}^{\sigma(i-1)}) \right) \left( f(\mathcal{X}^{\sigma([i-2])}) - f(\mathcal{X}^{\sigma([i-1])}) \right) \right]  \left(\frac{i-1}{n}\right) \nn \\
&\;\;\quad - \E\left[ \left(f(\mathcal{X})-f(\mathcal{X}^{(j)})\right) \left(f(\mathcal{X}^{\sigma([i-1])\backslash\{j\}})- f(\mathcal{X}^{\sigma([i-1])})\right)\; ; \; j\in\sigma([i-1]) \right].
\end{align}
To finish the proof, we claim that for a fixed $\sigma\in S_{n}$ and any $j\in \sigma([i-1])$,
\begin{align}\label{eq: claim for monoticity}
\E\left[ \big( f(\mathcal{X})-f(\mathcal{X}^{(j)}) \big) \big( f(\mathcal{X}^{\sigma([i-1])\backslash\{j\}})-f(\mathcal{X}^{\sigma([i-1])}) \big)\right] \ge 0, \quad i\in[n+1].
\end{align}
If the claim holds, we can obtain
\begin{align}
I_{i}&\le \E\left[ \left( f(\mathcal{X}) - f(\mathcal{X}^{\sigma(i)}) \right) \left( f(\mathcal{X}^{\sigma([i-1])}) - f(\mathcal{X}^{\sigma([i])}) \right) \right] \left(\frac{n-(i-1)}{n}\right) \nn\\
&\quad + \E\left[ \left( f(\mathcal{X}) - f(\mathcal{X}^{\sigma(i-1)}) \right) \left( f(\mathcal{X}^{\sigma([i-2])}) - f(\mathcal{X}^{\sigma([i-1])}) \right) \right] \left(\frac{i-1}{n}\right).
\end{align}
By \eqref{lem: BLZ lem2}, it follows that
\begin{align}
I_{i}\le \frac{2\text{Var}(f(\mathcal{X}))}{n}\left(1+\frac{n-i+1}{i} \right) = \frac{2\text{Var}(f(\mathcal{X}))}{i} \left(\frac{n+1}{n}\right).
\end{align}
What remains is to show the claim \eqref{eq: claim for monoticity}. For simplicity, we assume $\sigma$ is the identity, i.e., $\sigma(k)=k$ for all $k\in[n]$, and $j=1$. In this case,
\begin{align}
I_{i}':=\E\left[ \big( f(\mathcal{X})-f(\mathcal{X}^{(1)}) \big) \big( f(\mathcal{X}^{[i-1]\backslash\{1\}})-f(\mathcal{X}^{[i-1]}) \big)\right]
\end{align}
Note that $\mathcal{X}^{(1)}=(\mathcal{X}_{1}'',\mathcal{X}_{2},\cdots,\mathcal{X}_{n})$. It is enough to show that $I_{i}'\ge I_{i+1}'$ and $I_{n+1}'\ge 0$. We shall use the following notation:
\begin{align}
J&:=(\mathcal{X}_{2},\cdots,\mathcal{X}_{i-1}), \nn\\
J'&:=(\mathcal{X}_{2}',\cdots,\mathcal{X}_{i-1}'), \nn\\
K&:=(\mathcal{X}_{i+1},\cdots,\mathcal{X}_{n}).
\end{align}
We have
\begin{align}
I_{i}'&=\E\left[ \big( f(\mathcal{X}_{1},J,\mathcal{X}_{i},K) - f(\mathcal{X}_{1}'',J,\mathcal{X}_{i},K) \big) \big( f(\mathcal{X}_{1},J',\mathcal{X}_{i},K) - f(\mathcal{X}_{1}',J',\mathcal{X}_{i},K) \big) \right], \\
I_{i+1}'&=\E\left[ \big( f(\mathcal{X}_{1},J,\mathcal{X}_{i},K) - f(\mathcal{X}_{1}'',J,\mathcal{X}_{i},K) \big) \big( f(\mathcal{X}_{1},J',\mathcal{X}_{i}',K) - f(\mathcal{X}_{1}',J',\mathcal{X}_{i}',K) \big) \right].
\end{align}
Next, we get
\begin{align}
\E&\left[\big( f(\mathcal{X}_{1},J,\mathcal{X}_{i},K) - f(\mathcal{X}_{1}'',J,\mathcal{X}_{i},K) \big) \big( f(\mathcal{X}_{1},J',\mathcal{X}_{i},K) - f(\mathcal{X}_{1}',J',\mathcal{X}_{i},K) \big)
\middle| \mathcal{X}_{1},\mathcal{X}_{i},K \right] \\
&=\E\left[f(\mathcal{X}_{1},J,\mathcal{X}_{i},K) - f(\mathcal{X}_{1}'',J,\mathcal{X}_{i},K) \middle| \mathcal{X}_{1},\mathcal{X}_{i},K \right]^{2},
\end{align}
where we use $(\mathcal{X}_{1}'',J)$ and $(\mathcal{X}_{1}',J')$ i.i.d. in the last equality.
Similarly,
\begin{align}
\E&\left[\big( f(\mathcal{X}_{1},J,\mathcal{X}_{i},K) - f(\mathcal{X}_{1}'',J,\mathcal{X}_{i},K) \big) \big( f(\mathcal{X}_{1},J',\mathcal{X}_{i}',K) - f(\mathcal{X}_{1}',J',\mathcal{X}_{i}',K) \big)
\middle| \mathcal{X}_{1},\mathcal{X}_{i},\mathcal{X}_{i}',K \right] \nn\\
&=\E\left[f(\mathcal{X}_{1},J,\mathcal{X}_{i},K) - f(\mathcal{X}_{1}'',J,\mathcal{X}_{i},K) \middle| \mathcal{X}_{1},\mathcal{X}_{i},K \right]
\nn \\
&\quad\times \E\left[f(\mathcal{X}_{1},J',\mathcal{X}_{i}',K) - f(\mathcal{X}_{1}',J',\mathcal{X}_{i}',K) \middle| \mathcal{X}_{1},\mathcal{X}_{i}',K \right] \nn \\
&=\E\left[f(\mathcal{X}_{1},J,\mathcal{X}_{i},K) - f(\mathcal{X}_{1}'',J,\mathcal{X}_{i},K) \middle| \mathcal{X}_{1},\mathcal{X}_{i},K \right] \nn\\
&\quad\times \E\left[f(\mathcal{X}_{1},J,\mathcal{X}_{i}',K) - f(\mathcal{X}_{1}'',J,\mathcal{X}_{i}',K) \middle| \mathcal{X}_{1},\mathcal{X}_{i}',K \right].
\end{align}
Let us define $h(\cdot,\cdot,\cdot)$ by setting for any independent random variables $x,y,z$
\begin{align}
h(x,y,z)=\E\left[f(x,J,y,z) - f(\mathcal{X}_{1}'',J,y,z) \middle| x,y,z\right].
\end{align}
Then, we observe $I_{i+1}'\le I_{i}'$ because
\begin{align}
I_{i+1}'&=\E\left[ h(\mathcal{X}_{1},\mathcal{X}_{i},K)h(\mathcal{X}_{1},\mathcal{X}_{i}',K) \right] \nn\\
&=\E\bigg[ \E [ h(\mathcal{X}_{1},\mathcal{X}_{i},K)h(\mathcal{X}_{1},\mathcal{X}_{i}',K) | \mathcal{X}_{1},K ]\bigg] \nn\\
&= \E\left[ \E [ h(\mathcal{X}_{1},\mathcal{X}_{i},K) | \mathcal{X}_{1},K ] ^{2} \right] \nn\\
&\le \E\left[ \E [ h(\mathcal{X}_{1},\mathcal{X}_{i},K)^{2} | \mathcal{X}_{1},K ]  \right] = \E\left[  h(\mathcal{X}_{1},\mathcal{X}_{i},K)^{2}  \right] = I_{i}'.
\end{align}
Next, we will deal with $I_{n+1}'$.
\begin{align}
I_{n+1}' = \E\left[ \big( f(\mathcal{X})-f(\mathcal{X}^{(1)}) \big) \big( f(\mathcal{X}^{[n]\backslash\{1\}})-f(\mathcal{X}^{[n]}) \big)\right]
\end{align}
We observe
\begin{align}
\E\left[ f(\mathcal{X})f(\mathcal{X}^{[n]\backslash\{1\}}) \right] &= \E\left[ \E[ f(\mathcal{X})f(\mathcal{X}^{[n]\backslash\{1\}}) | \mathcal{X}_{1}]\right] = \E\left[ \E[ f(\mathcal{X}) | \mathcal{X}_{1}]^{2}\right].
\end{align}
Let us denote the independence between random variables by the notation $\indep$. Since 
\begin{align}
\mathcal{X}^{(1)}\indep \mathcal{X}^{[n]\backslash\{1\}},\quad \mathcal{X} \indep \mathcal{X}^{[n]}\quad \text{and}\quad \mathcal{X}^{(1)}\indep \mathcal{X}^{[n]},
\end{align}
we have
\begin{align}
\E\left[ f(\mathcal{X}^{(1)})f(\mathcal{X}^{[n]\backslash\{1\}}) \right]= \E\left[ f(\mathcal{X})f(\mathcal{X}^{[n]}) \right] = \E\left[ f(\mathcal{X}^{(1)})f(\mathcal{X}^{[n]})\right]=\E\left[ f(\mathcal{X}) \right]^{2}.
\end{align}
Therefore, by Jensen's inequality, we obtain
\begin{align}
I_{n+1}' = \E\left[\E[ f(\mathcal{X}) | \mathcal{X}_{1}]^{2}\right] - \E\left[ f(\mathcal{X}) \right]^{2} = \E\left[\E[ f(\mathcal{X}) | \mathcal{X}_{1}]^{2}\right] - \E\left[ \E[ f(\mathcal{X}) | \mathcal{X}_{1}] \right]^{2} \ge 0.
\end{align}

\subsection{Proof of Lemma \ref{lem: expectation computation}}\label{sec: expectation computation}
Let $\E_{st}$ be the conditional expectation given $(A,A',A'',S_{k})$. Under $\E_{st}$, the random pair $(st)$ is integrated out.
\begin{align}\label{eq: int out random pair}
\E_{st}\left[ Z_{st}Z_{st}^{[k]}v_{s}v_{t}v_{s}^{[k]}v_{t}^{[k]} \right] = \frac{2}{N(N+1)}\sum_{1\le i\le j\le N}Z_{ij}Z_{ij}^{[k]}v_{i}v_{j}v_{i}^{[k]}v_{j}^{[k]}
\end{align}
Set $Z'_{ij}:=(a_{ij}'-a_{ij}'')(1+\indic(i\neq j))$. Then, the sum in the right-hand side of \eqref{eq: int out random pair} is split up into two parts,
\begin{align}
\sum_{(ij)\in S_{k}}Z_{ij}Z'_{ij}v_{i}v_{j}v_{i}^{[k]}v_{j}^{[k]} +
\sum_{(ij)\notin S_{k}}Z_{ij}^{2}v_{i}v_{j}v_{i}^{[k]}v_{j}^{[k]},
\end{align}
where $1\le i\le j\le N$ in the both sums. Note that
\begin{align}
\E\left[ Z_{ij}Z'_{ij} \right] = \begin{cases}
\frac{4}{N} &\text{if } i\neq j,\\
\frac{1}{N} &\text{if } i=j,
\end{cases}
\quad\text{and}\quad
\E\left[ Z_{ij}^{2} \right] = \begin{cases}
\frac{8}{N} &\text{if } i\neq j,\\
\frac{2}{N} &\text{if } i=j.
\end{cases}
\end{align}
Next, we shall calculate
\begin{align}
\sum_{1\le i\le j\le N}\E\left[Z_{ij}Z_{ij}^{[k]}\right]v_{i}v_{j}v_{i}^{[k]}v_{j}^{[k]}.
\end{align}
We deal with two cases, $(ij)\in S_{k}$ and $(ij)\notin S_{k}$, separately. 
\begin{align}
\sum_{(ij)\in S_{k}}\E\left[Z_{ij}Z_{ij}'\right]v_{i}v_{j}v_{i}^{[k]}v_{j}^{[k]} = \frac{4}{N}\sum_{\substack{(ij)\in S_{k}\\i<j}}v_{i}v_{j}v_{i}^{[k]}v_{j}^{[k]}+\frac{1}{N}\sum_{\substack{(ij)\in S_{k}\\i=j}}v_{i}v_{j}v_{i}^{[k]}v_{j}^{[k]}
\end{align}
\begin{align}
\sum_{(ij)\notin S_{k}}\E\left[Z_{ij}^{2}\right]v_{i}v_{j}v_{i}^{[k]}v_{j}^{[k]} = \frac{8}{N}\sum_{\substack{(ij)\notin S_{k}\\i<j}}v_{i}v_{j}v_{i}^{[k]}v_{j}^{[k]}+\frac{2}{N}\sum_{\substack{(ij)\notin S_{k}\\i=j}}v_{i}v_{j}v_{i}^{[k]}v_{j}^{[k]}
\end{align}
For brevity, we denote $v_{i}v_{j}v_{i}^{[k]}v_{j}^{[k]}$ by $V_{i,j,k}$. Combining above two equations, we obtain
\begin{multline}
\sum_{1\le i\le j\le N}\E\left[Z_{ij}Z_{ij}^{[k]}\right]V_{i,j,k}=\\
\frac{8}{N}\sum_{i<j}V_{i,j,k}+\frac{4}{N}\sum_{i=j}V_{i,j,k}
-\frac{4}{N}\sum_{\substack{(ij)\in S_{k}\\i<j}}V_{i,j,k}-\frac{3}{N}\sum_{\substack{(ij)\in S_{k}\\i=j}}V_{i,j,k}-\frac{2}{N}\sum_{\substack{(ij)\notin S_{k}\\i=j}}V_{i,j,k}
\end{multline}
In fact, the last three sums of the above equation is negligible after taking expectation. The delocalization result (Lemma \ref{lem: delocalization}) and the Cauchy–Schwarz is used. Recall the event $\mathcal{E}_{1}$ defined previously. The delocalization occurs on $\mathcal{E}_{1}$ and $\prob(\mathcal{E}_{1}^{c})\le N^{-K}$ for an arbitrary $K>0$. Since we assume $|S_{k}|=k=N^{\tau}$ with $\tau<2$, we have
\begin{align}
\E\left[ \frac{1}{N}\sum_{\substack{(ij)\in S_{k}\\i<j}}V_{i,j,k} \left( \indic_{\mathcal{E}_{1}}+\indic_{\mathcal{E}_{1}^{c}} \right)\right]
\le \frac{1}{N}\cdot k \cdot \frac{L^{D}}{N^{2}} + \frac{1}{N}\cdot k\cdot \frac{1}{N^{K/2}}
= o(N^{-1}).
\end{align}
Using the similar argument, we get
\begin{align}
\E\left[ \frac{1}{N}\sum_{\substack{(ij)\in S_{k}\\i=j}}V_{i,j,k} \right] 
\le \frac{1}{N}\cdot N \cdot \frac{L^{D}}{N^{2}} + \frac{1}{N}\cdot N\cdot \frac{1}{N^{K/2}} = o(N^{-1})
\end{align}
and
\begin{align}
\E\left[ \frac{1}{N}\sum_{\substack{(ij)\notin S_{k}\\i=j}}V_{i,j,k} \right] 
\le \frac{1}{N}\cdot N \cdot \frac{L^{D}}{N^{2}} + \frac{1}{N}\cdot N\cdot \frac{1}{N^{K/2}} = o(N^{-1}).
\end{align}
Since $2\sum_{i<j}V_{i,j,k}=\sum_{i\neq j}V_{i,j,k}$ by symmetry, it follows that
\begin{align}
\E\left[\sum_{1\le i\le j\le N}\E\left[Z_{ij}Z_{ij}^{[k]}\right]V_{i,j,k}\right]=
\frac{4}{N}\E\left[\sum_{1\le i,j \le N}V_{i,j,k} \right] + o(N^{-1}).
\end{align}
What remains is to show
\begin{align}
\E\left[\sum_{1\le i\le j\le N}\left(Z_{ij}Z_{ij}^{[k]}-\E\left[Z_{ij}Z_{ij}^{[k]}\right]\right)V_{i,j,k}\right]=
o(N^{-1}).
\end{align}
Let $A'''=(a_{ij}''')$ be a independent copy of $A$ which is also independent of $A'$ and $A''$. As we set $B_{(ij)}$ and $B_{(ij)}^{[k]}$, analogously define $\tilde{B}_{(ij)}$ and $\tilde{B}_{(ij)}^{[k]}$ by using $a_{ij}'''$ and $a_{ji}'''$ instead of $a_{ij}''$ and $a_{ji}''$. Denote by $\tilde{\uu}_{(ij)}$ and $\tilde{\uu}_{(ij)}^{[k]}$ be the second top eigenvector of $\tilde{B}_{(ij)}$ and $\tilde{B}_{(ij)}^{[k]}$ respectively. To ease the notation, we define
\begin{align}
\tilde{U}_{i,j,k}:=\left(\tilde{\uu}_{(ij)}\right)_{i}\left(\tilde{\uu}_{(ij)}\right)_{j}\left(\tilde{\uu}_{(ij)}^{[k]}\right)_{i}\left(\tilde{\uu}_{(ij)}^{[k]}\right)_{j}
\end{align}
and
\begin{align}
W_{ij}:=Z_{ij}Z_{ij}^{[k]}-\E\left[Z_{ij}Z_{ij}^{[k]}\right].
\end{align}
We can find that
\begin{align}
\E\left[ W_{ij} \tilde{U}_{i,j,k} \right] = \E\left[W_{ij}\right]\cdot \E\left[\tilde{U}_{i,j,k} \right] = 0
\end{align}
because $Z_{ij}$ and $Z_{ij}^{[k]}$ only depend on $X_{ij}$, $X_{ij}'$ and $X_{ij}''$ while $\tilde{u}_{(ij)}$ and $\tilde{u}_{(ij)}^{[k]}$ is independent of $X_{ij}$, $X_{ij}'$ and $X_{ij}''$. Thus, it is enough to show
\begin{align}
\E\left[\sum_{1\le i\le j\le N}W_{ij}\left( V_{i,j,k} - \tilde{U}_{i,j,k} \right)\right]=o(N^{-1}).
\end{align}
Let us define the event $\mathcal{E}_{1}'$: for some $D>0$
\begin{align}
\max_{1\le i\le j\le N}(\lVert \vv_{2}  \rVert_{\infty},\lVert \tilde{\uu}_{(ij)} \rVert_{\infty},\lVert  \vv_{2}^{[k]}\rVert_{\infty},\lVert \tilde{\uu}_{(ij)}^{[k]} \rVert_{\infty}) \le \frac{L^{D}}{\sqrt{N}}.
\end{align}
We also define the event $\mathcal{E}_{2}'$:
\begin{align}
\max_{1\le i,j \le N}\lVert \vv_{2} - \uu_{(ij)} \rVert_{\infty}\le N^{-\frac{1}{2}-\delta} \quad \text{and}
\max_{1\le i,j \le N}\lVert \vv_{2}^{[k]} - \uu_{(ij)}^{[k]} \rVert_{\infty}\le N^{-\frac{1}{2}-\delta}.
\end{align}
On the event $\mathcal{E}':=\mathcal{E}_{1}'\cap \mathcal{E}_{2}'$, we can use the bound
\begin{align}
|V_{i,j,k} - \tilde{U}_{i,j,k}|\le \frac{L^{D}}{N^{2+\alpha}}.
\end{align}
On the event $\mathcal{E}_{1}'\cap(\mathcal{E}_{2}')^{c}$, it follows that from the delocalization
\begin{align}
|V_{i,j,k} - \tilde{U}_{i,j,k}|\le \frac{L^{D}}{N^{2}}.
\end{align}
We shall compute the expectation on $\mathcal{E}'$. Since $\E[Z_{ij}Z_{ij}^{[k]}]=O(N^{-1})$, we observe
\begin{align}
\E\left[\sum_{1\le i\le j\le N}W_{ij}\left( V_{i,j,k} - \tilde{U}_{i,j,k} \right)\indic_{\mathcal{E}'}\right]=O\left(N^{2}\cdot\frac{1}{N}\cdot \frac{L^{D}}{N^{2+\alpha}} \right).
\end{align}
On the event $\mathcal{E}_{1}'\cap(\mathcal{E}_{2}')^{c}$, we have
\begin{align}
\E\left[\sum_{1\le i\le j\le N}W_{ij}\left( V_{i,j,k} - \tilde{U}_{i,j,k} \right)\indic_{\mathcal{E}_{1}'\cap(\mathcal{E}_{2}')^{c}}\right]
\le N^{2}\cdot\frac{C}{q^{2}}\cdot \frac{L^{D}}{N^{2}}\cdot \frac{1}{N^{\kappa'}}
+N^{2}\cdot\frac{C}{N}\cdot \frac{L^{D}}{N^{2}}\cdot \frac{1}{N^{\kappa'}} 
\end{align}
for some $\kappa'>0$ given by Lemma \ref{lem: eigen vec small perturbation}. We can verify that
\begin{align}
\E\left[\sum_{1\le i\le j\le N}W_{ij}\left( V_{i,j,k} - \tilde{U}_{i,j,k} \right)\indic_{\mathcal{E}_{1}'\cap(\mathcal{E}_{2}')^{c}}\right] = o(N^{-1}).
\end{align}
Since $|V_{i,j,k} - \tilde{U}_{i,j,k}|\le 2$, on the event $(\mathcal{E}_{1}')^{c}$, we get
\begin{align}
\E\left[\sum_{1\le i\le j\le N}W_{ij}\left( V_{i,j,k} - \tilde{U}_{i,j,k} \right)\indic_{(\mathcal{E}_{2}')^{c}}\right]
\le N^{2}\cdot \frac{C}{q^{2}}\cdot\frac{1}{N^{K'}} =o(N^{-1})
\end{align}
by choosing $K'>0$ large enough. Therefore, we have the desired result.	

\subsection{Proof of Lemma \ref{lem: lem13 in BLZ}}

\begin{lem}\label{lem: lem9 in BLZ}
	For any $1\le k\le N$, there exists an integer $1\le i\le N$ such that for all $E$ and $\eta>0$
	\begin{align}
	\frac{1}{2\big(\max(\eta,|\lambda_{k}-E|)\big)^{2}}\le N\eta^{-1}\text{Im}R(E+i\eta)_{ii}.
	\end{align}
	Moreover, let $1\le k\le L$. There exists $C_{0}>0$ such that with overwhelming probability, we have
	\begin{align}
	|\lambda_{k}-2|<L^{C_{0}}N^{-2/3}
	\end{align}
	and for all integers $1\le i\le N$ and all $E$ satisfying $|E-2|<L^{C_{0}}N^{-2/3}$,
	\begin{align}
	N\eta^{-1}\text{Im}R(E+i\eta)_{ii}\le L^{C_{0}}\left(\min_{1\le j\le N}|\lambda_{j}-E|\right)^{-2}
	\end{align}
	\begin{proof}
		\begin{align}
		\text{Im}R_{ii}=\sum_{p=1}\frac{\eta(\vv_{p}(i))^{2}}{(\lambda_{p}-E)^{2}+\eta^{2}}
		\end{align}
		Note that $\vv_{k}(i)\ge N^{-1/2}$ for some $1\le i\le N$. Since
		\begin{align}
		N\eta^{-1}\text{Im}R_{ii}\ge \frac{N(\vv_{k}(i))^{2}}{(\lambda_{k}-E)^{2}+\eta^{2}}\ge \frac{1}{2\big(\max(\eta,|\lambda_{k}-E|)\big)^{2}},
		\end{align}
		the first one follows.\\
		Now we choose an integer $1\le k\le L$. By Lemma \ref{lem: eig val location}, with overwhelming probability, we have for some constant $C_{0}>0$
		\begin{align}
		|\lambda_{k}-2|\le |\lambda_{k}-\gamma_{k}|+|\gamma_{k}-2| \le L^{C_{0}}N^{-2/3},
		\end{align}
		which is the second result.\\
		Next, we consider $E$ satisfying $|E-2|<L^{C_{0}}N^{-2/3}$. By Lemma \ref{lem: eig val location} and Lemma \ref{lem: delocalization}, for some constants $c>0$, the following event $\mathcal{E}$ holds with overwhelming probability: we have $E-\lambda_{p}\ge cp^{2/3}N^{-2/3}$ for all integer $p> N':=\lfloor L^{3C_{0}}\rfloor$, and $\max_{1\le i \le N}\lVert\vv_{i} \rVert_{\infty}^{2}\le L/N$. On the event $\mathcal{E}$, we have for some $C>0$
		\begin{align}
		\sum_{p=N'+1}^{N}\frac{N(\vv_{p}(i))^{2}}{(\lambda_{p}-E)^{2}+\eta^{2}}\le
		\sum_{p=N'+1}^{N}\frac{L}{(\lambda_{p}-E)^{2}}\le CL(N')^{-1/3}N^{4/3}
		\end{align}
		and
		\begin{align}
		\sum_{p=1}^{N'}\frac{N(\vv_{p}(i))^{2}}{(\lambda_{p}-E)^{2}+\eta^{2}} \le
		\frac{LN'}{\left(\min_{1\le j\le N}|\lambda_{j}-E|\right)^{2}}.
		\end{align}
		By Lemma \ref{lem: eig val location}, we have $CL(N')^{-1/3}N^{4/3}\le LN'\left(\min_{1\le j\le N}|\lambda_{j}-E|\right)^{-2}$ with overwhelming probability, which implies the third result by adjusting the value of $C_{0}$.
	\end{proof}
	
\end{lem}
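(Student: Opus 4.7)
The plan is to prove the three claims in order, using spectral decomposition of the resolvent together with the rigidity and delocalization estimates already available for sparse \ER~graphs. For the lower bound, I would expand $\text{Im} R(E+i\eta)_{ii}=\sum_{p=1}^{N}\eta(\vv_{p}(i))^{2}/((\lambda_{p}-E)^{2}+\eta^{2})$; since $\vv_{k}$ is a unit vector, by pigeonhole some coordinate $i$ satisfies $(\vv_{k}(i))^{2}\ge 1/N$. Retaining only the $p=k$ summand and using $(\lambda_{k}-E)^{2}+\eta^{2}\le 2\max(\eta,|\lambda_{k}-E|)^{2}$ yields the claimed lower bound.

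For the eigenvalue bound, I would combine Lemma \ref{lem: eig val location} (edge rigidity) with the classical semicircle estimate $|\gamma_{k}-2|\lesssim (k/N)^{2/3}$. For $k\le L$ this gives $|\lambda_{k}-2|\le L^{C}N^{-2/3}+q^{-2}$, and the $q^{-2}=N^{-2b}$ term is $o(N^{-2/3})$ because $b>1/3$; taking $C_{0}$ large enough absorbs the remaining constants.

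For the upper bound on $N\eta^{-1}\text{Im} R_{ii}$, the strategy is to split the spectral sum at a cutoff $N':=\lfloor L^{3C_{0}}\rfloor$. For indices $p\le N'$, I would apply the delocalization estimate $(\vv_{p}(i))^{2}\le L/N$ (Lemma \ref{lem: delocalization}) and bound each denominator by $(\min_{j}|\lambda_{j}-E|)^{2}$, contributing at most $LN'(\min_{j}|\lambda_{j}-E|)^{-2}$. For $p>N'$, edge rigidity gives $|\lambda_{p}-E|\gtrsim p^{2/3}N^{-2/3}$ since $|E-2|\le L^{C_{0}}N^{-2/3}$ is dominated by the classical gap $(p/N)^{2/3}$ once $p\ge L^{3C_{0}}$; then $\sum_{p>N'}p^{-4/3}\lesssim(N')^{-1/3}$ produces a tail contribution of order $LN^{4/3}(N')^{-1/3}$. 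Since the second claim implies $(\min_{j}|\lambda_{j}-E|)^{-2}\gtrsim L^{-2C_{0}}N^{4/3}$, the tail can be absorbed into the low-index bound, and the combined estimate fits into $L^{C_{0}}(\min_{j}|\lambda_{j}-E|)^{-2}$ after readjusting $C_{0}$.

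The main obstacle is calibrating the cutoff $N'$ so that both contributions are simultaneously absorbed into $L^{C_{0}}(\min_{j}|\lambda_{j}-E|)^{-2}$: it must be large enough that edge rigidity forces tail eigenvalues past $|E-2|$, yet small enough that the delocalization overhead $LN'$ does not swamp the final bound. The polynomial-in-$L$ choice $N'=L^{3C_{0}}$, combined with the standard heuristic that near the spectral edge the eigenvalues form a rigid crystal with spacing $N^{-2/3}$, makes everything consistent.
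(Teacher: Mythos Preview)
Your proposal is correct and follows essentially the same route as the paper: spectral decomposition plus pigeonhole for the lower bound, rigidity and classical locations for $|\lambda_k-2|$, and the same cutoff $N'=\lfloor L^{3C_0}\rfloor$ with delocalization on $p\le N'$ and rigidity on $p>N'$ for the upper bound. The paper's proof matches your outline almost step for step, including the absorption of the tail $LN^{4/3}(N')^{-1/3}$ into $LN'(\min_j|\lambda_j-E|)^{-2}$ by readjusting $C_0$.
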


\begin{lem}\label{lem: lem10 in BLZ}
	Let $C_{0},C_{1}>0$. There exists $C>0$ such that, with overwhelming probability, the following event holds: for all $z=E+i\eta$ such that $|E-2|\le L^{C_{0}}N^{-2/3}$ and $\eta=L^{-C_{1}}N^{-2/3}$, we have for $i\neq j$
	\begin{align}
	|R(z)_{ij}|\le L^{C}N^{-1/3} \quad\text{and}\quad |R(z)_{ii}|\le C.
	\end{align}
	\begin{proof}
		We shall use the local law \cite[Theorem 2.9]{EKYY13}. Then, for $i\neq j$, the following holds with overwhelming probability:
		\begin{align}
		|R(z)_{ij}|\le \delta
		\quad\text{and}\quad
		|R(z)_{ii}-m_{\text{sc}}(z)|\le\delta
		\end{align}
		where for some constant $C'>0$
		\begin{align}
		\delta:=L^{C'}\left(\frac{1}{q}+\sqrt{\frac{\text{Im}\;m_{\text{sc}}(z)}{N\eta}}+\frac{1}{N\eta}\right)
		\end{align} 
		and $m_{\text{sc}}(z)$ is the Cauchy-Stieltjes transform of the semicircle distribution. According to \cite[Lemma 3.2]{KY13},
		\begin{align}
		|m_{\text{sc}}(z)|\asymp 1
		\quad\text{and}\quad
		\text{Im}\;m_{\text{sc}}(z)\asymp\begin{cases}
		\sqrt{|E-2|+\eta} & \text{if }|E|\le 2, \\
		\frac{\eta}{\sqrt{|E-2|+\eta}} & \text{if } |E|\ge 2.
		\end{cases}
		\end{align}
		Thus, for $|E-2|\le L^{C_{0}}N^{-2/3}$ and $\eta=L^{-C_{1}}N^{-2/3}$, it follows for some $C>0$
		\begin{align}
		\text{Im}\;m_{\text{sc}}(z) \le L^{C}N^{-1/3}.
		\end{align}
		Recall that we consider the regime $q=N^{b}$ with $b>\frac{1}{3}$. Therefore, for some $C>0$, we obtain
		\begin{align}
		\delta \le L^{C}N^{-1/3}.
		\end{align}
		The desired result follows.
	\end{proof}	
\end{lem}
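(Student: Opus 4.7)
The plan is to derive the stated entrywise bounds directly from the local semicircle law for sparse Erd\H{o}s-R\'{e}nyi graphs of Erd\H{o}s--Knowles--Yin--Yau \cite[Theorem 2.9]{EKYY13}. That result states that, with overwhelming probability and uniformly in $z=E+i\eta$ with $\eta$ not too small,
\begin{align*}
|R(z)_{ij}-\delta_{ij}\,m_{\text{sc}}(z)|\le \Lambda(z),\qquad \Lambda(z):=L^{C'}\!\left(\frac{1}{q}+\sqrt{\frac{\text{Im}\,m_{\text{sc}}(z)}{N\eta}}+\frac{1}{N\eta}\right),
\end{align*}
where $m_{\text{sc}}$ is the Stieltjes transform of the semicircle law and $\delta_{ij}$ is the Kronecker symbol. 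Granted this, the lemma reduces entirely to controlling $\Lambda(z)$ at the particular spectral scale prescribed.

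First I would collect the standard asymptotic behaviour of $m_{\text{sc}}$ near the spectral edge from \cite[Lemma 3.2]{KY13}: $|m_{\text{sc}}(z)|\asymp 1$ uniformly in a bounded neighbourhood of $2$, and $\text{Im}\,m_{\text{sc}}(z)\asymp\sqrt{\kappa+\eta}$ (if $|E|\le 2$) or $\eta/\sqrt{\kappa+\eta}$ (if $|E|\ge 2$), where $\kappa=||E|-2|$. Substituting $\kappa\le L^{C_{0}}N^{-2/3}$ and $\eta=L^{-C_{1}}N^{-2/3}$ in either case yields $\text{Im}\,m_{\text{sc}}(z)\le L^{C''}N^{-1/3}$ for some $C''>0$ depending only on $C_{0},C_{1}$.

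Second I would bound the three terms in $\Lambda(z)$ individually under the working assumption $q=N^{b}$ with $b>1/3$. The sparsity contribution satisfies $q^{-1}=N^{-b}\le N^{-1/3}$; the fluctuation term satisfies $\sqrt{\text{Im}\,m_{\text{sc}}(z)/(N\eta)}\le \sqrt{L^{C''+C_{1}}\cdot N^{-1/3}\cdot N^{-1/3}}=L^{C'''}N^{-1/3}$; and the deterministic ``$1/N\eta$'' term is $L^{C_{1}}N^{-1/3}$. Hence $\Lambda(z)\le L^{C}N^{-1/3}$ for a suitable enlarged $C$. The off-diagonal bound $|R(z)_{ij}|\le \Lambda(z)\le L^{C}N^{-1/3}$ is then immediate, while the diagonal bound follows from the triangle inequality $|R(z)_{ii}|\le |m_{\text{sc}}(z)|+\Lambda(z)\le c+L^{C}N^{-1/3}$, which is bounded by an absolute constant for $N$ large.

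The statement also requires the bounds to hold simultaneously for all $z$ in the prescribed window, and this is the only place where care is needed. The local law in \cite{EKYY13} is stated pointwise with overwhelming probability, so to upgrade to a uniform statement I would take a net of cardinality $N^{O(1)}$ in the compact rectangle $\{E:|E-2|\le L^{C_{0}}N^{-2/3}\}\times\{\eta=L^{-C_{1}}N^{-2/3}\}$, apply the local law on each grid point, and then interpolate using the trivial Lipschitz estimate $\lVert \partial_{z}R(z)\rVert\le \eta^{-2}$ with mesh $N^{-D}$ for $D$ large. No serious obstacle lies here; the real content is that the sparsity hypothesis $b>1/3$ is what guarantees the ``$1/q$'' contribution to $\Lambda(z)$ does not dominate the target scale $N^{-1/3}$. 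If $b$ were smaller, the same argument would still yield a bound, but only of order $q^{-1}$, which would be insufficient for the downstream applications in Lemmas \ref{lem: lem13 in BLZ} and \ref{lem: lem14 in BLZ}.
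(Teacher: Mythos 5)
Your proposal is correct and follows essentially the same route as the paper: invoke the local law of \cite[Theorem 2.9]{EKYY13}, bound $\mathrm{Im}\,m_{\text{sc}}(z)$ at the edge via \cite[Lemma 3.2]{KY13}, and use $q=N^{b}$ with $b>\frac{1}{3}$ to show all three contributions to the error parameter are $O(L^{C}N^{-1/3})$. The only addition is your explicit net-plus-Lipschitz argument for uniformity in $E$, which the paper leaves implicit here (a similar continuity argument appears only later, in the proof of Lemma \ref{lem: lem13 in BLZ}); this is a reasonable extra precaution but does not change the substance.
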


\begin{proof}[Proof of Lemma \ref{lem: lem13 in BLZ}]
	For $0\le t\le k$, let $\FF_{t}$ be the $\sigma$-algebra generated by the random variable $A$, $S_{k}$ and $(a'_{i_{s}j_{s}})_{1\le s\le t}$. For $1\le i,j\le N$, we set
	\begin{align}
	T_{ij}=\{ 1\le t\le k :\{i_{t},j_{t}\}\cap\{i,j\}\neq\emptyset \}
	\end{align}
	Since $\FF_{0}$ contains $\sigma(S_{k})$, the cardinality $|T_{ij}|$ is $\FF_{0}$-measurable. We write
	\begin{align}
	|T_{ij}|=&\sum_{l=i}^{N}\indic[(il)\in S_{k}] + \sum_{l=1}^{i-1} \indic[(li)\in S_{k}] \nn\\
	& + \indic(i\neq j) \left( \sum_{l=j}^{N}\indic[(jl)\in S_{k}] + \sum_{l=1}^{j-1} \indic[(lj)\in S_{k}] - 1 \right)
	\end{align}
	Since $S_{k}$ is uniformly chosen at random, we have for any $1\le i'\le j'\le N$
	\begin{align}
	\prob((i'j')\in S_{k})=\frac{\begin{pmatrix}
		\frac{N(N+1)}{2}-1 \\ k-1
		\end{pmatrix}}{\begin{pmatrix}
		\frac{N(N+1)}{2} \\ k
		\end{pmatrix}} = \frac{2k}{N(N+1)}.
	\end{align}
	Thus, it follows that
	\begin{align}
	\E|T_{ij}|=\begin{cases}
	\frac{2k(2N-1)}{N(N+1)} & \text{if } i < j,\\
	\\
	\frac{2k}{(N+1)} & \text{if } i=j.
	\end{cases}
	\end{align}
	In other words,
	\begin{align}
	\E|T_{ij}| = \Theta\left(\frac{k}{N}\right)
	\end{align}
	Next, we apply \cite[Proposition 1.1]{Chatterjee07}.
	Then, for any $u>0$, we have
	\begin{align}
	\prob\left( |T_{ij}|\ge \E|T_{ij}| +u \right) \le \exp\left( -\frac{u^{2}}{4\E|T_{ij}|+2u} \right),
	\end{align}
	which implies, with overwhelming probability,
	\begin{align}\label{eq: event T}
	\text{max}_{1\le i,j\le N} |T_{ij}|\le \frac{5k'}{N}
	\end{align}
	where $k'=\max\left(k, N(\log{N})^{2}\right)$ (put $u=k'/N$).
	Let $\mathcal{T}$ be the event that \eqref{eq: event T} holds. Denote by $\mathcal{E}_{t}\in\FF_{t}$ the event that $\mathcal{T}$ occurs and the conclusion of Lemma \ref{lem: lem10 in BLZ} holds for $R^{[t]}$ (with the convention $A^{[0]}=A$). If $\mathcal{E}_{t}$ holds, then for all $z=E+i\eta$ with $|E-2|\le L^{C_{0}}N^{-2/3}$ and $\eta=L^{-C_{1}}N^{-2/3}$, we have for some $C>0$,
	\begin{align}\label{eq: resolvent bound 1}
	\max_{i\neq j}\left| R_{ij}^{[t]}(z) \right|\le L^{C}N^{-1/3}\quad\text{and}\quad
	\max_{i}\left| R_{ii}^{[t]}(z)\right| \le C
	\end{align}
	We define $A_{0}^{[t]}$ as the symmetric matrix obtained from $A^{[t]}$ by setting to $0$ the $(i_{t}j_{t})$-entry and $(j_{t}i_{t})$-entry. By construction, $A_{0}^{[t+1]}$ is $\FF_{t}$-measurable. We denote by $R_{0}^{[t]}$ the resolvent of $A_{0}^{[t]}$. Recall the following resolvent identity.
	\begin{align}\label{eq: resolvent identity}
	(X-zI)^{-1}=(Y-zI)^{-1}+(Y-zI)^{-1}(Y-X)(X-zI)^{-1}
	\end{align}
	Using the resolvent identity \eqref{eq: resolvent identity}, we get
	\begin{align}
	R_{0}^{[t+1]} = \sum_{l=0}^{2}\left(R^{[t]}(A^{[t]}-A_{0}^{[t+1]})\right)^{l}R^{[t]}
	+ \left(R^{[t]}(A^{[t]}-A_{0}^{[t+1]})\right)^{3}R_{0}^{[t+1]}.
	\end{align}
	We set for $i\le j$, $E_{ij}=\ee_{i}\ee_{j}^{T}+\ee_{j}\ee_{i}^{T}\indic(i\neq j)$ where $\ee_{i}$ denotes the canonical basis of $\R^{n}$ such that the $i$-th entry is equal to $1$ and the other entries are equal to $0$. We have
	\begin{align}
	A^{[t]}-A^{[t+1]}_{0}=a_{i_{t+1}j_{t+1}}E_{i_{t+1}j_{t+1}}\quad\text{and}\quad
	A^{[t+1]}-A^{[t+1]}_{0}=a'_{i_{t+1}j_{t+1}}E_{i_{t+1}j_{t+1}}
	\end{align}
	Recall the fact that $|a_{ij}|\le Cq^{-1}$ and $\lVert R_{0}^{[t+1]}\rVert\le\eta^{-1}$. From now on, we work on the event $\mathcal{E}_{t}$ with fixing $z=E+i\eta$ satisfying $|E-2|\le L^{C_{0}}N^{-2/3}$ and $\eta=L^{-C_{1}}N^{-2/3}$.
	For $i\neq j$, even if we assume the worst case $t+1\in T_{ij}$, we have
	\begin{align}
	\left| (R_{0}^{[t+1]})_{ij} \right| &\le L^{C}N^{-1/3} + Cq^{-1} + Cq^{-2} +Cq^{-3}\eta^{-1} \nn\\
	&=L^{C}N^{-1/3} + Cq^{-1} + Cq^{-2} + CL^{C_{1}}q^{-3}N^{2/3} \nn\\
	&\lesssim L^{C}N^{-1/3} 
	\end{align}
	where we use $q\gg N^{1/3}$ in the last inequality. For $i=j$, it is enough to consider
	\begin{align}
	R_{0}^{[t+1]} = R^{[t]} + R^{[t]}(A^{[t]}-A_{0}^{[t+1]})R^{[t]}
	+ \left(R^{[t]}(A^{[t]}-A_{0}^{[t+1]})\right)^{2}R_{0}^{[t+1]}.
	\end{align}
	Then, it follows that
	\begin{align}
	\left| (R_{0}^{[t+1]})_{ii} \right| \le C + Cq^{-1} + C q^{-2} \eta^{-1} \lesssim C. 
	\end{align}
	In sum,
	\begin{align}\label{eq: resolvent bound 2}
	\max_{i\neq j}\left| (R^{[t+1]}_{0})_{ij}^{[t+1]}(z) \right|\le L^{C}N^{-1/3}\quad\text{and}\quad
	\max_{i}\left| (R^{[t+1]}_{0})_{ii}^{[t+1]}(z)\right| \le C
	\end{align}
	Similarly, by the resolvent identity with $R^{[t+1]}$ and $R^{[t+1]}_{0}$,
	\begin{align}
	R^{[t+1]} & = R_{0}^{[t+1]} + R_{0}^{[t+1]}(A_{0}^{[t+1]}-A^{[t+1]})R_{0}^{[t+1]} + \left(R_{0}^{[t+1]}(A_{0}^{[t+1]}-A^{[t+1]})\right)^{2}R^{[t+1]} \\
	&= \sum_{l=0}^{2}\left(R_{0}^{[t+1]}(A_{0}^{[t+1]}-A^{[t+1]})\right)^{l}R_{0}^{[t+1]}
	+ \left(R_{0}^{[t+1]}(A_{0}^{[t+1]}-A^{[t+1]})\right)^{3}R^{[t+1]}, \label{eq: resolvent identity R t+1 R 0 3rd}
	\end{align}
	we observe
	\begin{align}\label{eq: resolvent bound 3}
	\max_{i\neq j}\left| R_{ij}^{[t]}(z) \right|\le L^{C}N^{-1/3}\quad\text{and}\quad
	\max_{i}\left| R_{ii}^{[t]}(z)\right| \le C.
	\end{align}
	Since $R_{0}^{[t+1]}\in\FF_{t}$, it follows from \eqref{eq: resolvent identity R t+1 R 0 3rd} that
	\begin{align}\label{eq: expansion 1}
	\E\left[ R^{[t+1]}_{ij} \middle| \FF_{t} \right] &= (R_{0}^{[t+1]})_{ij} - r_{ij}^{[t+1]} \E[a'_{i_{t+1}j_{t+1}}] + s_{ij}^{[t+1]} \E[(a'_{i_{t+1}j_{t+1}})^{2}] \nn\\
	&\quad - \big((R_{0}^{[t+1]}E_{i_{t+1}j_{t+1}})^{3}R^{[t+1]}\big)_{ij}\E[(a'_{i_{t+1}j_{t+1}})^{3}]
	\end{align}
	where $r_{ij}^{[t]}:=(R_{0}^{[t]}E_{i_{t}j_{t}}R_{0}^{[t]})_{ij}$ and $s_{ij}^{[t]}:=\big((R_{0}^{[t]}E_{i_{t}j_{t}})^{2}R_{0}^{[t]}\big)_{ij}$. We observe
	\begin{align}\label{eq: bound for 3rd order}
	\left| \big((R_{0}^{[t+1]}E_{i_{t+1}j_{t+1}})^{3}R^{[t+1]}\big)_{ij}\E[(a'_{i_{t+1}j_{t+1}})^{3}] \right| \le \alpha_{t+1}
	\end{align}
	where 
	\begin{align}
	\alpha_{t}:=L^{C}N^{-2/3}q^{-3}\indic(t\notin T_{ij}) + L^{C}N^{-1/3}q^{-3}\indic(t\in T_{ij}) + Cq^{-3}\indic(\{i,j\}=\{i_{t},j_{t}\})
	\end{align}
	We write the resolvent identity with $R^{[t]}$ and $R_{0}^{[t+1]}$,
	\begin{align}
	R^{[t]} = \sum_{l=0}^{2}\left(R^{[t+1]}_{0}(A^{[t+1]}_{0}-A^{[t]})\right)^{l}R^{[t+1]}_{0}
	+ \left(R^{[t+1]}_{0}(A^{[t]}-A_{0}^{[t+1]})\right)^{3}R^{[t]},
	\end{align}
	and we get
	\begin{align}
	R^{[t]}_{ij}  = (R^{[t+1]}_{0})_{ij} - r_{ij}^{[t+1]} a_{i_{t+1}j_{t+1}} + s_{ij}^{[t+1]} (a_{i_{t+1}j_{t+1}})^{2} - \big((R_{0}^{[t]}E_{i_{t}j_{t}})^{3}R^{[t]}\big)_{ij}(a_{i_{t+1}j_{t+1}})^{3}.
	\end{align}
	In other words,
	\begin{align}
	(R^{[t+1]}_{0})_{ij}  =  R^{[t]}_{ij} + r_{ij}^{[t+1]} a_{i_{t+1}j_{t+1}} - s_{ij}^{[t+1]} (a_{i_{t+1}j_{t+1}})^{2} + \big((R_{0}^{[t]}E_{i_{t}j_{t}})^{3}R^{[t]}\big)_{ij}(a_{i_{t+1}j_{t+1}})^{3}.
	\end{align}
	Note that, as we show in \eqref{eq: bound for 3rd order}, we can find
	\begin{align}
	\left| \big((R_{0}^{[t+1]}E_{i_{t+1}j_{t+1}})^{3}R^{[t+1]}\big)_{ij}(a_{i_{t+1}j_{t+1}})^{3}] \right| \le \alpha_{t+1}.
	\end{align}
	It follows from \eqref{eq: expansion 1} that
	\begin{align}
	\E\left[ R^{[t+1]}_{ij} \middle| \FF_{t} \right] - R^{[t]}_{ij} &=  r_{ij}^{[t+1]} (a_{i_{t+1}j_{t+1}}-\E[a_{i_{t+1}j_{t+1}}]) - s_{ij}^{[t+1]} \left((a_{i_{t+1}j_{t+1}})^{2}-\E[(a_{i_{t+1}j_{t+1}})^{2}]\right) \nn\\
	&\quad  + \big((R_{0}^{[t]}E_{i_{t}j_{t}})^{3}R^{[t]}\big)_{ij}(a_{i_{t+1}j_{t+1}})^{3}
	-\big((R_{0}^{[t]}E_{i_{t}j_{t}})^{3}R^{[t+1]}\big)_{ij}\E[(a_{i_{t+1}j_{t+1}})^{3}]
	\end{align}
	Now we are ready to make the desired bound.
	\begin{align}\label{eq: error terms}
	R^{[k]}_{ij}-R_{ij} = \sum_{t=0}^{k-1}\left( R^{[t+1]}_{ij}-R^{[t]}_{ij} \right) = 
	\sum_{t=0}^{k-1}\left( R^{[t+1]}_{ij}- \E\left[ R^{[t+1]}_{ij} \middle| \FF_{t} \right] \right)
	+ r_{ij} - s_{ij} + \alpha_{ij}
	\end{align}
	where
	\begin{align}
	r_{ij}:=\sum_{t=1}^{k} r_{ij}^{[t]}(a_{i_{t}j_{t}}-\E[a_{i_{t}j_{t}}]),\quad
	s_{ij}:=\sum_{t=1}^{k} s_{ij}^{[t]} \left((a_{i_{t}j_{t}})^{2}-\E[(a_{i_{t}j_{t}})^{2}]\right),\quad
	|\alpha_{ij}|\le 2\sum_{t=1}^{k} \alpha_{t}.
	\end{align}
	We define 
	\begin{align}
	Z_{t+1}:=\left(R^{[t+1]}_{ij}- \E\left[ R^{[t+1]}_{ij}\middle| \FF_{t} \right]\right)\indic(\mathcal{E}_{t}).
	\end{align}
	For any $u\ge 0$, we have
	\begin{align}\label{eq: azuma ineq use}
	\prob\left( \left|\sum_{t=0}^{k-1}\left( R^{[t+1]}_{ij}- \E\left[ R^{[t+1]}_{ij} \middle| \FF_{t} \right] \right)\right| \ge u \right) &\le \prob\left( \left|\sum_{t=1}^{k}Z_{t}\right| \ge u \; ; \; \bigcap_{t=0}^{k}\mathcal{E}_{t}\right) + \prob\left( \bigcup_{t=0}^{k}(\mathcal{E}_{t})^{c} \right) \nn\\
	&\le \prob\left( \left|\sum_{t=1}^{k}Z_{t}\right| \ge u \right) + \sum_{t=0}^{k-1}\prob\big( (\mathcal{E}_{t})^{c} \big). 
	\end{align}
	Since the event $\mathcal{T}$ and the conclusion of Lemma \ref{lem: lem10 in BLZ} hold with overwhelming probability, it is verified that for any $C>0$
	\begin{align}
	\sum_{t=0}^{k-1}\prob\big( (\mathcal{E}_{t})^{c} \big) = O(N^{-C}).
	\end{align}
	Observe that $Z_{s}\in\FF_{t}$ for all $s\le t$,
	\begin{align}
	\E[Z_{t+1}|\FF_{t}] = \indic(\mathcal{E}_{t}) \E\left[\left(R^{[t+1]}_{ij}- \E\left[ R^{[t+1]}_{ij}\middle| \FF_{t} \right]\right) \middle| \FF_{t} \right] = 0,
	\end{align}
	and, by the resolvent identity,
	\begin{align}
	|Z_{t}|\le \beta_{t}
	\end{align}
	where
	\begin{align}
	\beta_{t}:=L^{C}N^{-\frac{2}{3}}q^{-1}\indic(t\notin T_{ij}) + L^{C}N^{-\frac{1}{3}}q^{-1}\indic(t\in T_{ij})+Cq^{-1}\indic(\{i,j\}=\{i_{t},j_{t}\}).
	\end{align}
	Next, we apply Azuma-Hoeffding inequality to bound $\sum_{t=1}^{k}Z_{t}$ because each $Z_{t}$ has zero conditional expectation and is bounded by $\beta_{t}$. It follows that
	\begin{align}
	\prob\left( \left|\sum_{t=1}^{k}Z_{t}\right| \ge  u  \cdot \big({\textstyle\sum}_{t=1}^{k}\beta_{t}^{2}\big)^{1/2}\right)\le 2\exp\left(-\frac{u^{2}}{2}\right).
	\end{align}
	By choosing $u=\log{N}$, we obtain that with overwhelming probability
	\begin{align}\label{eq: control 1st error term}
	\left|\sum_{t=0}^{k-1}\left( R^{[t+1]}_{ij}- \E\left[ R^{[t+1]}_{ij} \middle| \FF_{t} \right] \right)\right| \le (\log{N}) \cdot \big({\textstyle\sum}_{t=1}^{k}\beta_{t}^{2}\big)^{1/2}.
	\end{align}
	Now it is enough to show $\big({\textstyle\sum}_{t=1}^{k}\beta_{t}^{2}\big)^{1/2}\le L^{-C}N^{-1/3}$. Note that
	\begin{align}
	\left(\sum_{t=1}^{k}\beta_{t}^{2}\right)^{1/2}\le L^{C}N^{-2/3}q^{-1}\sqrt{k}+L^{C}N^{-\frac{1}{3}}q^{-1}\sqrt{\frac{k'}{N}}+Cq^{-1}.
	\end{align}
	Let $q=N^{b}$ with $b>1/3$ and $k=N^{\tau}$. Then, it is required that
	\begin{align}
	\max\left( -\frac{2}{3}-b+\frac{\tau}{2}, -\frac{5}{6}-b+\frac{\tau}{2},-\frac{1}{3}-b,-b \right) < -\frac{1}{3}
	\end{align}
	and it boils down to the relation
	\begin{align}\label{eq: relation of exponents}
	\tau < 2b+\frac{2}{3},
	\end{align}
	which is the given assumption. From this relation, we can realize that $\tau < \frac{5}{3}$ due to the assumption $b<\frac{1}{2}$.\\
	In order to bound $r_{ij}$ and $s_{ij}$, we introduce a backward filtration $\{\FF'_{t}\}$. Define $\FF'_{t}$ as the $\sigma$-algebra generated by $A'$, $S_{k}$ and $\{a_{ij}\}_{1\le i,j\le N}\backslash\{a_{i_{s}j_{s}}\}_{1\le s\le t}$. Since $A^{[t]}\in\FF'_{t}$, the event $\mathcal{E}_{t}\in\FF'_{t}$ so the bounds of the resolvents we found above, \eqref{eq: resolvent bound 1}, \eqref{eq: resolvent bound 2} and \eqref{eq: resolvent bound 3}, is still valid with overwhelming probability. Thus, on the event $\mathcal{E}_{t}$, we have
	\begin{align}
	\left| r_{ij}^{[t]}(a_{i_{t}j_{t}}-\E[a_{i_{t}j_{t}}]) \right| \le \beta_{t}.
	\end{align}
	Define $Z'_{t}:=r_{ij}^{[t]}(a_{i_{t}j_{t}}-\E[a_{i_{t}j_{t}}])\indic(\mathcal{E}_{t})$. Note that $Z'_{s}\in \FF'_{t}$ for $s > t$. Since $A^{[t]}_{0}\in\FF'_{t}$, we observe $r_{ij}^{[t]}\in\FF'_{t}$, $\E[Z'_{t} | \FF'_{t} ] = 0$ and $|Z'_{t}|\le\beta_{t}$. According to the same argument we used in \eqref{eq: azuma ineq use}, it is enough to bound
	\begin{align}
	\prob\left( \left|\sum_{t=1}^{k}Z'_{t}\right| \ge u \cdot \big({\textstyle\sum}_{t=1}^{k}\beta_{t}^{2}\big)^{1/2}\right)
	\end{align}
	By applying Azuma-Hoeffding inequality with $u=\log{N}$, with overwhelming probability, it follows that
	\begin{align}
	|r_{ij}|\le (\log{N}) \cdot \big({\textstyle\sum}_{t=1}^{k}\beta_{t}^{2}\big)^{1/2} \le L^{-C} N^{-1/3}.
	\end{align}
	Similarly, we can bound $|s_{ij}|$ using the backward filtration and Azuma-Hoeffding inequality. Since we have
	\begin{align}
	\left| s_{ij}^{[t]} \left((a_{i_{t}j_{t}})^{2}-\E[(a_{i_{t}j_{t}})^{2}]\right) \right| \le q^{-1}\beta_{t},
	\end{align}
	it follows that
	\begin{align}
	|s_{ij}|\le L^{-C}N^{-1/3}.
	\end{align}
	Moreover, we can check
	\begin{align}
	\alpha_{t}\le q^{-2}\beta_{t},
	\end{align}
	which implies, under the relation \eqref{eq: relation of exponents}, 
	\begin{align}
	\alpha_{ij}\le 2\sum_{t=1}^{k}\alpha_{t}\le 2q^{-2}\sum_{t=1}^{k}\beta_{t}\le L^{-C}N^{-1/3}.
	\end{align}
	As a final step, the desired result follows by a continuity argument. Since
	\begin{align}
	\left| R_{ij}(E+i\eta) - R_{ij}(E'+i\eta) \right| \le \frac{|E-E'|}{\eta^{2}},
	\end{align}
	we split the interval $\{E:|E-2|\le L^{C_{0}}N^{-2/3}\}$ into many subintervals of length less than $\eta^{\frac{5}{2}}$. We have, with overwhelming probability, for every end point $w$ of the subintervals,  
	\begin{align}
	\max_{1\le i,j\le N} N\eta|R^{[k]}(w)_{ij}-R(w)_{ij}|\le \frac{1}{L^{2}}.
	\end{align}
	By the continuity, the desired result also hold for all points inside every subintervals..
\end{proof}

\subsection{Proof of Lemma \ref{lem: lem14 in BLZ}}
We write $\vv_{m}=(v_{m,1},\cdots,v_{m,N})$ and $\vv^{[k]}_{m}=(v^{[k]}_{m,1},\cdots, v^{[k]}_{m,N})$ for $m=1,\cdots,N$. By the spectral theorem, we have
\begin{align}
N\eta\text{Im}R(z)_{ij} = \sum_{m=1}\frac{N\eta^{2}v_{m,i}v_{m,j}}{(\lambda_{m}-E)^{2}+\eta^{2}}.
\end{align}
According to Lemma \ref{lem: eig val location} and Lemma \ref{lem: delocalization}, for some $c_{0}>0$, it follows that with overwhelming probability
\begin{align}
|\lambda_{2}- 2|\le L^{c_{0}}N^{-2/3},\quad \lVert \vv_{m} \rVert_{\infty}^{2}\le L^{D}N^{-1}
\end{align}
and, for $m>N':=\lfloor L^{3c_{0}} \rfloor$ and $E$ satisfying $|E-2|\le L^{c_{0}}N^{-2/3}$,
\begin{align}
E-\lambda_{m}\ge c m^{2/3}N^{-2/3}.
\end{align}
Thus, we obtain with overwhelming probability that
\begin{align}
\left|\sum_{m=N'+1}^{N}\frac{Nv_{m,i}v_{m,j}}{(\lambda_{m}-E)^{2}+\eta^{2}}\right| \le L^{D}(N')^{-1/3}N^{4/3}
\end{align}
Fix $\eps>0$. Applying \cite[Theorem 2.7]{EKYY12}, we can find $\delta>0$ such that 
\begin{align}\label{eq: gap between extreme eigevalues}
\prob\left(\lambda_{2}-\lambda_{3} > \delta N^{-2/3}\right)\ge 1-\eps.
\end{align}
As a result, if $|\lambda_{2}-E|\le (\delta/4)N^{-2/3}$, the following event
\begin{align}
\left|\sum_{m=3}^{N'}\frac{Nv_{m,i}v_{m,j}}{(\lambda_{m}-E)^{2}+\eta^{2}}\right| 
\le \frac{4L^{D}N'N^{4/3}}{\delta^{2}}
\end{align}
holds with probability at least $1-\eps$. If $|\lambda_{2}-E|\le \eta L^{-C_{1}}$, Lemma \ref{lem: delocalization} implies that
\begin{align}
\left| \frac{N\eta^{2}v_{2,i}v_{2,j}}{(\lambda_{2}-E)^{2}+\eta^{2}} - Nv_{2,i}v_{2,j}\right|\le L^{D-2C_{1}}
\end{align}
holds with overwhelming probability.
Also, since $\lambda_{1}\sim \zeta q+1/\zeta q$ with overwhelming probability by Lemma \ref{lem: top eig val location}, we can verify that
\begin{align}
\left|  \frac{N\eta^{2}v_{2,i}v_{2,j}}{(\lambda_{1}-E)^{2}+\eta^{2}} \right| \le \frac{L^{D-2C_{1}}N^{-4/3}}{q^{2}}
\end{align}
holds with overwhelming probability (Lemma \ref{lem: delocalization} is also used). Combining all of the above estimates and choosing $C_{1}>0$ large enough, we conclude that for all $E$ satisfying $|\lambda_{2}-E|\le \eta L^{-C_{1}}$
\begin{align}
\max_{1\le i\le j\le N} |N\eta\text{Im}R(E+i\eta)_{ij}-Nv_{2,i}v_{2,j}|\le L^{-2}
\end{align}
with probability at least $1-\eps$. Now we repeat the above all argument with replacing $R$ with $R^{[k]}$, which provides us that for all $E$ satisfying $|\lambda_{2}^{[k]}-E|\le \eta L^{-C_{1}}$
\begin{align}
\max_{1\le i\le j\le N} |N\eta\text{Im}R^{[k]}(E+i\eta)_{ij}-Nv^{[k]}_{2,i}v^{[k]}_{2,j}|\le L^{-2}
\end{align}
with probability at least $1-\eps$.
According to Lemma \ref{lem: lem12 in BLZ}, we have $ |\lambda_{2}-\lambda_{2}^{[k]}|\le \eta L^{-C_{1}}$ with overwhelming probability. Therefore $\eqref{eq: difference between resolvent and vector components}$ holds with probability at least $1-3\eps$.

\end{appendices}

\end{document}